\begin{document}
\newcommand{\fr}[2]{\frac{\;#1\;}{\;#2\;}}
\newtheorem{theorem}{Theorem}[section]
\newtheorem{lemma}{Lemma}[section]
\newtheorem{proposition}{Proposition}[section]
\newtheorem{corollary}{Corollary}[section]
\newtheorem{conjecture}{Conjecture}[section]
\newtheorem{remark}{Remark}[section]
\newtheorem{definition}{Definition}[section]
\newtheorem{example}{Example}[section]
\newtheorem{notation}{Notation}[section]
\numberwithin{equation}{section}
\newcommand{\Aut}{\mathrm{Aut}\,}
\newcommand{\CSupp}{\mathrm{CSupp}\,}
\newcommand{\Supp}{\mathrm{Supp}\,}
\newcommand{\rank}{\mathrm{rank}\,}
\newcommand{\col}{\mathrm{col}\,}
\newcommand{\len}{\mathrm{len}\,}
\newcommand{\leftlen}{\mathrm{leftlen}\,}
\newcommand{\rightlen}{\mathrm{rightlen}\,}
\newcommand{\length}{\mathrm{length}\,}
\newcommand{\wt}{\mathrm{wt}\,}
\newcommand{\diff}{\mathrm{diff}\,}
\newcommand{\lcm}{\mathrm{lcm}\,}
\newcommand{\dom}{\mathrm{dom}\,}
\newcommand{\fun}{\mathrm{fun}\,}
\newcommand{\SUPP}{\mathrm{SUPP}\,}
\newcommand{\supp}{\mathrm{supp}\,}
\newcommand{\End}{\mathrm{End}\,}
\newcommand{\Hom}{\mathrm{Hom}\,}
\newcommand{\ran}{\mathrm{ran}\,}
\newcommand{\Mat}{\mathrm{Mat}\,}
\newcommand{\rk}{\mathrm{rk}\,}
\newcommand{\rs}{\mathrm{rs}\,}
\newcommand{\piv}{\mathrm{piv}\,}
\newcommand{\perm}{\mathrm{perm}\,}
\newcommand{\inv}{\mathrm{inv}\,}
\newcommand{\orb}{\mathrm{orb}\,}
\newcommand{\id}{\mathrm{id}\,}
\newcommand{\soc}{\mathrm{soc}\,}
\newcommand{\unit}{\mathrm{unit}\,}
\newcommand{\word}{\mathrm{word}\,}

\title{A Gr\"{o}bner Basis Approach to Combinatorial Nullstellensatz}
\author{Yang Xu$^1$ \,\,\,\,\,\, Haibin Kan$^2$\,\,\,\,\,\,Guangyue Han$^3$}
\maketitle

\renewcommand{\thefootnote}{\fnsymbol{footnote}}

% 单位、地址、基金
\footnotetext{\hspace*{-12mm} \begin{tabular}{@{}r@{}p{13.4cm}@{}}
$^1$ &Department of Mathematics, Faculty of Science, The University of Hong Kong, Pokfulam Road, Hong Kong, China. {E-mail:12110180008@fudan.edu.cn}\\
$^2$ & Shanghai Key Laboratory of Intelligent Information Processing, School of Computer Science, Fudan University,
Shanghai 200433, China.\\
&Shanghai Engineering Research Center of Blockchain, Shanghai 200433, China.\\
&Yiwu Research Institute of Fudan University, Yiwu City, Zhejiang 322000, China. {E-mail:hbkan@fudan.edu.cn} \\
$^3$ & Department of Mathematics, Faculty of Science, The University of Hong Kong, Pokfulam Road, Hong Kong, China. {E-mail:ghan@hku.hk} \\

\end{tabular}}

\vskip 3mm

{\hspace*{-6mm}\bf Abstract---}\! In this paper, using some conditions that arise naturally in Alon's combinatorial Nullstellensatz as well as its various extensions and generalizations, we characterize Gr\"{o}bner bases consisting of monic polynomials, which helps us to establish a Nullstellensatz from a Gr\"{o}bner basis perspective. As corollaries of this general Nullstellensatz, we establish four special Nullstellensatz, which, among others, include a common generalization of the Nullstellensatz for multisets established in K\'{o}s, R\'{o}nyai and M\'{e}sz\'{a}ros \cite{23,24} and the Nullstellensatz with multiplicity established in Ball and Serra \cite{9}, and include a punctured Nullstellensatz, generalizing several existing results in the literature.  As applications of our punctured Nullstellensatz, we extend some results on hyperplane covering in \cite{9,23,24} to wider settings, and give an alternative proof of the generalized Alon-F\"{u}redi theorem established in Bishnoi, Clark, Potukuchi and Schmitt \cite{12}. Unless specified otherwise, all our results are established over an arbitrary commutative ring $R$.

\section{Introduction}
Throughout the paper, we let $\mathbb{Z}^{+}$ denote the set of all the positive integers, and let $\mathbb{N}=\mathbb{Z}^{+}\cup\{0\}$.

Let $\mathbb{F}$ be a field, $n\in\mathbb{Z}^{+}$, $S_1,\dots,S_n$ be nonempty finite subsets of $\mathbb{F}$, and set $g_i=\prod_{u\in S_i}(x_i-u)\in\mathbb{F}[x_1,\dots,x_n]$ for $i=1,\dots,n$. The following two celebrated theorems, known as combinatorial Nullstellensatz, have been established in Alon \cite{3}.

\begin{theorem}([3, Theorem 1.1])
Let $f\in\mathbb{F}[x_1,\dots,x_n]$ with $f(a_1,\dots,a_n)=0$ for all $a\in\prod_{i=1}^{n}S_i$. Then, there exist $h_1,\dots,h_n\in\mathbb{F}[x_1,\dots,x_n]$ such that $f=\sum_{i=1}^{n}h_i\cdot g_i$ and
\begin{equation}\mbox{$\deg(h_i)+\deg(g_i)\leqslant\deg(f)$ for all $i=1,\dots,n$}.\end{equation}
Moreover, if $f,g_1,\dots,g_n\in R[x_1,\dots,x_n]$ for some subring $R$ of $\mathbb{F}$, then $h_1,\dots,h_n$ above can be chosen from $R[x_1,\dots,x_n]$.
\end{theorem}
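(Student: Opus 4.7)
The plan is to prove the theorem by performing multivariate polynomial division of $f$ by $g_1,\ldots,g_n$ and then showing that the remainder must be identically zero. Since each $g_i=\prod_{u\in S_i}(x_i-u)$ is monic in $x_i$ of degree $|S_i|$, I would first establish a division lemma: for any $f\in\mathbb{F}[x_1,\dots,x_n]$ one can write $f=\sum_{i=1}^{n}h_ig_i+\tilde f$ with $\deg_{x_i}(\tilde f)<|S_i|$ for every $i$, and moreover $\deg(h_ig_i)\leqslant\deg(f)$ for each $i$. The construction is iterative: for each $i$ in turn, view the current polynomial inside $\mathbb{F}[x_1,\dots,\widehat{x_i},\dots,x_n][x_i]$ and eliminate all powers $x_i^k$ with $k\geqslant|S_i|$ by repeatedly subtracting monomial multiples of $g_i$. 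Each such elimination removes one leading term and, because $g_i$ is monic, introduces only terms of strictly smaller total degree; this is what delivers the crucial individual bound $\deg(h_ig_i)\leqslant\deg(f)$ rather than merely $\deg(\tilde f)\leqslant\deg(f)$.

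Second, I would invoke the following vanishing lemma: if $\tilde f\in\mathbb{F}[x_1,\dots,x_n]$ satisfies $\deg_{x_i}(\tilde f)<|S_i|$ for every $i$ and $\tilde f$ vanishes on $\prod_{i=1}^{n}S_i$, then $\tilde f=0$. This is proved by induction on $n$. For $n=1$, a univariate polynomial of degree less than $|S_1|$ with $|S_1|$ roots must be zero. For $n\geqslant2$, write $\tilde f$ as a polynomial in $x_n$ with coefficients in $\mathbb{F}[x_1,\dots,x_{n-1}]$; fixing $(a_1,\dots,a_{n-1})\in\prod_{i<n}S_i$ and letting $x_n$ range over $S_n$ shows via the base case that $\tilde f(a_1,\dots,a_{n-1},x_n)\equiv 0$, so each coefficient of $x_n^j$ vanishes on $\prod_{i<n}S_i$ with the required degree bounds, and the inductive hypothesis forces each such coefficient to be zero. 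Combining with the division lemma, the remainder $\tilde f=f-\sum_ih_ig_i$ vanishes on $\prod_{i=1}^{n}S_i$ (because every $g_i$ does and $f$ does by hypothesis), hence $\tilde f=0$, giving $f=\sum_{i=1}^{n}h_ig_i$ with the stated degree bound.

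For the ``moreover'' clause, the division lemma uses only that each $g_i$ is monic in $x_i$, so when $f$ and $g_1,\dots,g_n$ have coefficients in a subring $R\subseteq\mathbb{F}$, every monomial subtraction is performed over $R$; consequently $h_1,\dots,h_n$ and $\tilde f$ all lie in $R[x_1,\dots,x_n]$. The conclusion $\tilde f=0$ is detected in $\mathbb{F}[x_1,\dots,x_n]$ but holds coefficient-wise, hence in $R[x_1,\dots,x_n]$.

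The main obstacle will be the degree accounting in the division step: one must guarantee not only that the remainder has total degree at most $\deg(f)$ but also that each individual product $h_ig_i$ does. Dividing successively by $g_1,\dots,g_n$ and, at each elimination, peeling off one maximal-degree monomial at a time (and crediting it to the appropriate $h_i$) makes this bookkeeping transparent, since a monomial $c\,x^\alpha x_i^k$ with $k\geqslant|S_i|$ is replaced by $c\,x^\alpha x_i^{k-|S_i|}\bigl(x_i^{|S_i|}-g_i\bigr)$ whose total degree is at most that of the removed monomial. The vanishing lemma and the subring compatibility are comparatively routine once this careful division is in hand.
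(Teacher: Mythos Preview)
Your argument is correct and is essentially Alon's original proof. The two ingredients you isolate---the monic division step producing a remainder with $\deg_{x_i}\tilde f<|S_i|$ together with the per-term degree bound, and the vanishing lemma showing such a remainder must be identically zero on the grid---are exactly the same ones the paper uses, and your treatment of the subring clause is fine.

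The paper does not give a standalone proof of this statement (it is cited as background), but recovers it in Example~3.1 through its general Gr\"{o}bner basis framework: Lemma~3.1 is an abstract division algorithm for monic families (your division step is the special case $\Lambda=[1,n]$, $g(\lambda)=g_\lambda$), the vanishing lemma is quoted as [3, Lemma~2.1] to verify condition~(2) of Theorem~3.1, and then the equivalence $(2)\Leftrightarrow(5)$ of Theorem~3.1 yields the degree-bounded representation. So the underlying mechanics are identical; the paper's contribution is to observe that these two facts are precisely what certifies $(g_1,\dots,g_n)$ as a Gr\"{o}bner basis of the vanishing ideal, and that the same template (division plus a rank/zero-intersection check) then delivers the multiset, multiplicity, and punctured variants uniformly. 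Your direct route is shorter for this single statement; the paper's detour through Theorem~3.1 buys the simultaneous equivalence with the Las\'on-type maximality condition~(4) and the stronger support condition~(3), and makes the extension to general monic families automatic.
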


\setlength{\parindent}{0em}
\begin{theorem}([3, Theorem 1.2])
Let $f\in\mathbb{F}[x_1,\dots,x_n]$, and let $(t_1,\dots,t_n)\in\mathbb{N}^{n}$ such that $\deg(f)=\sum_{i=1}^{n}t_i$, and the coefficient of $\prod_{i=1}^{n}{x_i}^{t_i}$ in $f$ is nonzero. Suppose that $t_i\leqslant|S_i|-1$ for all $i=1,\dots,n$. Then, there exists $a\in\prod_{i=1}^{n}S_i$ such that $f(a_1,\dots,a_n)\neq0$.
\end{theorem}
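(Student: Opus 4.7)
The plan is to deduce Theorem~1.2 from Theorem~1.1 by a proof by contradiction. Suppose, towards a contradiction, that $f(a_1,\dots,a_n)=0$ for every $a\in\prod_{i=1}^{n}S_i$. Then Theorem~1.1 supplies polynomials $h_1,\dots,h_n\in\mathbb{F}[x_1,\dots,x_n]$ with $f=\sum_{i=1}^{n}h_i g_i$ and $\deg(h_i)+\deg(g_i)\leqslant\deg(f)$ for every $i$. Since $\deg(g_i)=|S_i|$, this rewrites as $\deg(h_i)\leqslant\deg(f)-|S_i|$. The overall strategy is then to compare the coefficient of the distinguished monomial $m:=\prod_{j=1}^{n}x_j^{t_j}$ on the two sides of this identity and extract a contradiction between the hypothesis that this coefficient in $f$ is nonzero and the fact that, under the above degree bounds, it must be zero in $\sum_i h_i g_i$.

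The heart of the argument is a monomial-level bookkeeping on each summand $h_i g_i$. The key observation is that $g_i=\prod_{u\in S_i}(x_i-u)$ is monic in $x_i$ of degree exactly $|S_i|$, so for each $i$ I would enumerate the monomials of $h_i$ that can combine with some term of $g_i$ to produce $m$: they must take the form $x_i^{t_i-k}\prod_{j\neq i}x_j^{t_j}$ for some integer $0\leqslant k\leqslant|S_i|$ with $t_i-k\geqslant 0$, and their total degree is $\deg(f)-k$. The degree bound $\deg(h_i)\leqslant\deg(f)-|S_i|$ then forces $\deg(f)-k\leqslant\deg(f)-|S_i|$, i.e.\ $k\geqslant|S_i|$, hence $k=|S_i|$. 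But then the required $x_i$-exponent in $h_i$ is $t_i-|S_i|<0$, which is impossible by the hypothesis $t_i\leqslant|S_i|-1$. Consequently the coefficient of $m$ in each $h_i g_i$ vanishes, so the coefficient of $m$ in $\sum_{i=1}^{n}h_i g_i=f$ is $0$, contradicting the hypothesis that it is nonzero.

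The only real obstacle is the degree bookkeeping in the second step; once one has internalized that the monic-in-$x_i$ structure of $g_i$ together with the sharp degree bound $\deg(h_i)\leqslant\deg(f)-|S_i|$ forces any top-total-degree monomial of $h_i g_i$ to carry $x_i$-degree at least $|S_i|$, the contradiction with $t_i<|S_i|$ is immediate. Every other ingredient---existence of the representation $f=\sum h_i g_i$ with the sharp degree bound, and the equality $\deg(g_i)=|S_i|$---is either supplied directly by Theorem~1.1 or entirely routine.
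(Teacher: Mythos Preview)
Your proof is correct and is essentially Alon's original derivation of Theorem~1.2 from Theorem~1.1; it coincides with the paper's implication $(5)\Rightarrow(6)$ in Theorem~3.1 (specialized via Example~3.1), where the same top-degree coefficient comparison is encoded abstractly through Lemma~2.1(2). The only cosmetic difference is that the paper phrases the argument in terms of $\max(\supp(h_ig_i))=\max(\supp(h_i))+\{\theta(i)\}$ rather than tracking the $x_i$-exponent directly, but the content is identical.
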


\setlength{\parindent}{2em}
Combinatorial Nullstellensatz is a very powerful algebraic tool, and has numerous applications in combinatorics, graph theory, combinatorial number theory and coding theory; see, among many others, the papers \cite{2,3,5,30,32,35} and the recent book \cite{36} for more details.

As detailed below, Alon's combinatorial Nullstellensatz has been generalized and extended in a number of directions.

Ball and Serra prove in \cite{9} a Nullstellensatz with multiplicity, which, with the multiplicities of zeros set to be $1$, boils down to Alon's combinatorial Nullstellensatz. They also establish a punctured Nullstellensatz, which is used to recover the Alon-F\"{u}redi theorem on the number of zeros of a polynomial (see \cite{4}), and to generalize many results in Alon and F\"{u}redi \cite{4} and Bruen \cite{15}. K\'{o}s and R\'{o}nyai prove in \cite{24} a Nullstellensatz for multisets, which, with multisets assumed to be ordinary sets, boils down to Alon's combinatorial Nullstellensatz. As in \cite{9}, they establish a punctured Nullstellensatz, which is used to generalize many applications of Alon's combinatorial Nullstellensatz to multisets, including the theorem of Alon and F\"{u}redi on hyperplane covering (see \cite{4}), the Cauchy-Davenport theorem, Sun's theorem on value sets (see \cite{35}) and the Eliahou-Kervaire theorem (see \cite{20}). Mezei proves in \cite{29} a more general Nullstellensatz which includes both the aforementioned Nullstellensatz as special cases.

Laso\'{n} proves in \cite{27} a generalization of Theorem 1.2 by weakening the assumption that $\deg(f)=\sum_{i=1}^{n}t_i$ to that $(t_1,\dots,t_n)$ is maximal among all $(c_1,\dots,c_n)\in\mathbb{N}^{n}$ where the coefficient of $\prod_{i=1}^{n}{x_i}^{c_i}$ in $f$ is nonzero. Batzaya and Bayarmagnai further weaken the assumption for $(t_1,\dots,t_n)$ and prove in \cite{11} a common generalization of Laso\'{n}'s result and [24, Theorem 6].

Now, suppose that $\mathbb{F}$ is replaced by an arbitrary commutative ring $R$. Schauz proves in \cite{34} that if for any $i=1,\dots,n$, $S_i$ satisfies Condition (D), i.e., $v-u$ is not a zero divisor of $R$ for all $u\neq v\in S_i$ (see \cite{12,17,18,34} or Definition 2.2 for more details), then the conclusion of Theorem 1.1 remains valid. Micha{\l}ek proves in \cite{30} that if all the $S_i$'s satisfy Condition (D), then the conclusion of Theorem 1.2 remains valid. Clark proves in \cite{17} that Condition (D) is in fact a necessary and sufficient condition for Theorem 1.1 to hold true. Clark proves in \cite{18} a variant of the punctured Nullstellensatz in Ball and Serra \cite{9} under the assumption that all the $S_i$'s satisfy Condition (D). It is also observed in \cite{18} that the Nullstellensatz in \cite{9} and its punctured version can be generalized under Condition (D). Kulosman and Wang prove in \cite{25} a Nullstellensatz for multisets under the assumption that $R$ is an integral domain of characteristic zero. K\'{o}s, M\'{e}sz\'{a}ros and R\'{o}nyai prove in \cite{23} a Nullstellensatz for multisets under Condition (D), which generalizes both Micha{\l}ek's result in \cite{30} and the Nullstellensatz for multisets in \cite{24}.

We refer the reader to \cite{17,28,31} for some other extensions and generalizations of Theorems 1.1 and 1.2.

As in \cite{23,24,28,29,32}, Gr\"{o}bner bases (see \cite{1,16,21}) arise naturally in Alon's combinatorial Nullstellensatz and its generalizations. In this paper, we re-examine Alon's combinatorial Nullstellensatz and some of its generalizations from a Gr\"{o}bner basis perspective. Our starting point is the observation that each of Theorem 1.1, Theorem 1.2 and Laso\'{n}'s generalization of Theorem 1.2 on maximality of $(t_1,\dots,t_n)$ is in fact a necessary and sufficient condition for $(g_1,\dots,g_n)$ to be a Gr\"{o}bner basis; and moreover, such a fact extends to more general settings including the Nullstellensatz with multiplicity and the Nullstellensatz for multisets.

In Section 2, we collect some notations, definitions and lemmas. In Section 3, we present alternative characterizations of Gr\"{o}bner bases consisting of monic polynomials (Definition 2.1, Theorem 3.1). Three of these characterizations have appeared in Alon's combinatorial Nullstellensatz and its generalizations, yet there is also one characterization that seems to be new.

In Section 4, we apply Theorem 3.1 to derive a Nullstellensatz (Theorem 4.2), where Condition (D) plays a critical role (also see Theorem 4.1). Theorems 4.1 and 4.2 are general in the sense that they can be used to derive more specific Nullstellensatz, as detailed in Section 5.

In Section 5.1, we apply Theorems 4.1 and 4.2 to establish an extension of Mezei's Nullstellensatz [29, Theorem 6.12] (Theorem 5.1). In Section 5.2, we further apply Theorem 5.1 to establish a common generalization of the Nullstellensatz with multiplicity in \cite{9} and the Nullstellensatz for multisets in \cite{23,24} (Theorem 5.2). In Section 5.3, with the help of Theorem 5.2, we establish a punctured Nullstellensatz (Theorem 5.3), which includes the punctured Nullstellensatz in \cite{9,24} as special cases. In Section 5.4, we apply Theorems 4.1 and 4.2 to establish another Nullstellensatz (Theorem 5.4), which is further used to generalize a result in Sauermann and Wigderson \cite{33} (Corollary 5.2). We note that in each of Theorems 5.1, 5.2 and 5.4, Condition (D) arises as a necessary and sufficient condition with some mild assumptions.

In Section 6.1, we apply our punctured Nullstellensatz to hyperplane covering (Theorem 6.1, Corollaries 6.1 and 6.2), generalizing some related results in \cite{9,23,24}. In Section 6.2, following the spirit of \cite{9,18}, we apply our punctured Nullstellensatz to give an alternative proof of the generalized Alon-F\"{u}redi theorem established in Bishnoi, Clark, Potukuchi and Schmitt \cite{12}, a result that has many applications in combinatorics and coding theory (see \cite{12} for more details).

\section{Preliminaries}

\setlength{\parindent}{2em}
First, we introduce some notations and terminologies that will be used throughout the remainder of the paper. Fix $n\in\mathbb{Z}^{+}$, and let
\begin{equation}[1,n]=\{1,\dots,n\}.\end{equation}
Let $\mathbf{0}$ denote the all-zero vector of $\mathbb{N}^{n}$. For any $\alpha\in\mathbb{N}^{n}$ and $i\in[1,n]$, we let $\alpha_i$ denote the $i$-th entry of $\alpha$. For $\alpha,\beta\in\mathbb{N}^{n}$, we define $\alpha+\beta\in\mathbb{N}^{n}$ and $\alpha-\beta\in\mathbb{Z}^{n}$ as
\begin{equation}\mbox{$(\alpha+\beta)_i=\alpha_i+\beta_i$ for all $i\in[1,n]$},\end{equation}
\begin{equation}\mbox{$(\alpha-\beta)_i=\alpha_i-\beta_i$ for all $i\in[1,n]$},\end{equation}
and we write $\alpha\leqslant\beta$ if
$\alpha_i\leqslant\beta_i$ for all $i\in[1,n]$. For any $A\subseteq\mathbb{N}^{n}$, we define
\begin{equation}\Delta(A)=\{\beta\in\mathbb{N}^{n}\mid\exists~\alpha\in A~s.t.~\beta\leqslant\alpha\},\end{equation}
\begin{equation}\nabla(A)=\{\beta\in\mathbb{N}^{n}\mid\exists~\alpha\in A~s.t.~\alpha\leqslant\beta\},\end{equation}
\begin{equation}\max(A)=\{\beta\in A\mid\forall~\alpha\in A:\beta\leqslant\alpha\Longrightarrow\beta=\alpha\}.\end{equation}
For any $A\subseteq\mathbb{N}^{n}$ and $\theta\in A$, $\theta$ is referred to as \textit{the greatest element} of $A$ if $\gamma\leqslant\theta$ for all $\gamma\in A$. For any $A,B\subseteq\mathbb{N}^{n}$, we let
\begin{equation}A+B=\{\alpha+\beta\mid\alpha\in A,\beta\in B\}.\end{equation}
Let $R$ be a commutative ring with multiplicative identity $1_{R}$, and let
\begin{equation}\Omega\triangleq R[x_1,\dots,x_n]\end{equation}
denote the polynomial ring over $R$ in $n$ variables $x_1,\dots,x_n$. For $f\in\Omega$ and $\alpha\in\mathbb{N}^{n}$, we let $f_{[\alpha]}$ denote the coefficient of $\prod_{i=1}^{n}{x_i}^{\alpha_i}$ in $f$. For any $f\in\Omega$, we let
\begin{equation}\supp(f)=\{\alpha\in\mathbb{N}^{n}\mid f_{[\alpha]}\neq0\}.\end{equation}
For any $A\subseteq\mathbb{N}^{n}$, let
\begin{equation}\delta(A)=\{f\in\Omega\mid \supp(f)\subseteq A\}.\end{equation}
We also define $\Phi:(R^{n}\times\mathbb{N}^{n})\times\mathbb{N}^{n}\longrightarrow R$ as
\begin{equation}\hspace*{-12mm}\Phi((u,\alpha),\gamma)=\begin{cases}
\left(\prod_{k=1}^{n}\binom{\gamma_k}{\alpha_k}\right)\left(\prod_{k=1}^{n}{u_k}^{\gamma_{k}-\alpha_{k}}\right),&\alpha\leqslant\gamma;\\
0,&\alpha\nleqslant\gamma.
\end{cases}
\end{equation}
Moreover, for any $f,g\in\Omega$, we write $g\mid f$ if there exists $h\in\Omega$ with $f=h\cdot g$; for any $U\subseteq\Omega$, we let $\langle U\rangle$ denote the ideal of $\Omega$ generated by $U$; and for any finitely generated $R$-module $M$, we let $\rank_{R}(M)$ denote the minimum number of generators of $M$.

\setlength{\parindent}{2em}
Next, we give some definitions. We begin by introducing {\bf{monic polynomials}}.

\setlength{\parindent}{0em}
\begin{definition}
For any $g\in\Omega$, $g$ is said to be monic if there exists $\theta\in\supp(g)$ such that $g_{[\theta]}=1_{R}$ and $\gamma\leqslant\theta$ for all $\gamma\in\supp(g)$, that is to say, $\theta$ is the greatest element of $\supp(g)$ and $g_{[\theta]}=1_{R}$.
\end{definition}

\setlength{\parindent}{2em}
Now following [17, Definition 1] and [34, Definition 2.8], we state the aforementioned Condition (D).

\setlength{\parindent}{0em}
\begin{definition}
For any $u\in R$, $u$ is referred to as a multiplicative unit of $R$ if there exists $v\in R$ with $uv=1_{R}$, and $u$ is referred to as a zero divisor of $R$ if there exists $w\in R$ such that $w\neq0$ and $uw=0$. For $X\subseteq R$, we say that $X$ satisfies Condition (F) in $R$ if for any $a,b\in X$ with $a\neq b$, $b-a$ is a multiplicative unit of $R$; and we say that $X$ satisfies Condition (D) in $R$ if for any $a,b\in X$ with $a\neq b$, $b-a$ is {\textbf{not}} a zero divisor of $R$.
\end{definition}

\setlength{\parindent}{2em}
Now we state our definition of Gr\"{o}bner basis. {\bf{In this paper, we only consider Gr\"{o}bner bases consisting of monic polynomials}}. We refer the reader to [1, Definition 4.1.13] for the general definition of Gr\"{o}bner basis.

\setlength{\parindent}{0em}
\begin{definition}
Let $\Lambda$ be a finite set, and let $(g(\lambda)\mid\lambda\in\Lambda)\in\Omega^{\Lambda}$ be a family of monic polynomials. For any $\lambda\in\Lambda$, let $\theta(\lambda)$ be the greatest element of $\supp(g(\lambda))$. Let $Q$ be an ideal of $\Omega$ with $\{g(\lambda)\mid\lambda\in\Lambda\}\subseteq Q$. Then, we say that $(g(\lambda)\mid\lambda\in\Lambda)$ is a Gr\"{o}bner basis of $Q$ if for any $\tau\in Q-\{0\}$, there exists $\alpha\in\supp(\tau)$ and $r\in\Lambda$ with $\theta(r)\leqslant\alpha$.
\end{definition}

\setlength{\parindent}{2em}
We give some remarks on Definition 2.3.

\setlength{\parindent}{0em}
\begin{remark}
First, a Gr\"{o}bner basis is usually defined as a set of polynomials instead of a tuple. In this paper, we find it convenient to handle a tuple since we do not assume that $g(\lambda)\neq g(\mu)$ for all $\lambda\neq\mu\in\Lambda$.

\hspace*{4mm}\,\,Second, in general, a Gr\"{o}bner basis is defined with respect to a fixed monomial order on $\mathbb{N}^{n}$, and the $g(\lambda)$'s do not need to be monic. Following [21, Sections 21.2 and 21.3], a monomial order on $\mathbb{N}^{n}$ is a total order $\preccurlyeq$ on $\mathbb{N}^{n}$ satisfying the following two conditions:

$(i)$\,\,For any $\alpha,\beta,\gamma\in\mathbb{N}^{n}$ with $\alpha\preccurlyeq\beta$, it holds that $\alpha+\gamma\preccurlyeq\beta+\gamma$;

$(ii)$\,\,For any $\alpha,\beta\in\mathbb{N}^{n}$ with $\alpha\leqslant\beta$, it holds that $\alpha\preccurlyeq\beta$.

Now let $g\in\Omega$ be monic, and let $\theta$ be the greatest element of $\supp(g)$. Then, for an arbitrary term order $\preccurlyeq$, it follows from $(ii)$ that $\alpha\preccurlyeq\theta$ for all $\alpha\in\supp(g)$, that is to say, $\prod_{i=1}^{n}{x_i}^{\theta_i}$ is the leading monomial of $g$ with respect to $\preccurlyeq$; and moreover, the leading coefficient of $g$ with respect to $\preccurlyeq$ is equal to $g_{[\theta]}=1_R$ (see [21, Definition 21.7]). Based on this observation, Definition 2.3 is indeed a special case of the general definition of Gr\"{o}bner basis. Moreover, if $R$ is a field, then Definition 2.3 becomes a special case of [1, Definition 1.6.1], [16, Section 2.8] and [21, Definition 21.25].
\end{remark}

\setlength{\parindent}{2em}
Now we follow \cite{23,24} and introduce the notion of multiset, which will be used in Sections 5 and 6.

\setlength{\parindent}{0em}
\begin{definition}
A pair $(X,\psi)$ is referred to as a multiset if $X$ is a set, $\psi$ is a function with $X\subseteq\dom(\psi)$, and $\psi(u)\in\mathbb{Z}^{+}$ for all $u\in X$.
\end{definition}

\setlength{\parindent}{2em}
The following definition of $r$-hyperplane, which will be used in Section 6, extends that of hyperplane.

\setlength{\parindent}{0em}
\begin{definition}
For any $H\subseteq R^{n}$, $r\in\mathbb{N}$, $H$ is referred to as an $r$-hyperplane of $R^{n}$ if there exists $h\in\Omega$ such that $H=\{u\in R^{n}\mid h(u_1,\dots,u_n)=0\}$ and $\deg(h)=r$. A $1$-hyperplane of $R^{n}$ is referred to as a hyperplane of $R^{n}$.
\end{definition}

\setlength{\parindent}{2em}
Now, we give some lemmas. The following lemma collects some basic properties of monic polynomials that will be used frequently in our discussion. For the sake of completeness, a proof is included in Appendix A.

\setlength{\parindent}{0em}
\begin{lemma}
Let $g\in\Omega$ be a monic polynomial, and let $\theta$ be the greatest element of $\supp(g)$. Also let $f\in\Omega$. Then, it holds that:

{\bf{(1)}}\,\,For any $\gamma\in\max(\supp(f))$, we have $(f\cdot g)_{[\gamma+\theta]}=f_{[\gamma]}$;

{\bf{(2)}}\,\,$\max(\supp(f\cdot g))=\max(\supp(f))+\{\theta\}$;

{\bf{(3)}}\,\,$\Delta(\supp(f\cdot g))=\Delta(\supp(f))+\Delta(\supp(g))$;

{\bf{(4)}}\,\,$\deg(f\cdot g)=\deg(f)+\deg(g)$. In particular, $f\neq0\Longrightarrow f\cdot g\neq0$;

{\bf{(5)}}\,\,Let $A\subseteq\mathbb{N}^{n}$, $h\in\Omega$ such that $A+\supp(h)\subseteq\Delta(\supp(f))$. Then, we have $A+\supp(h\cdot g)\subseteq\Delta(\supp(f\cdot g))$.
\end{lemma}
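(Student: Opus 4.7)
The plan is to establish the five parts in order, with part (1) serving as the computational engine that drives everything else. For (1), I would expand $(f\cdot g)_{[\gamma+\theta]}=\sum_{\alpha+\beta=\gamma+\theta}f_{[\alpha]}g_{[\beta]}$ over $(\alpha,\beta)\in\supp(f)\times\supp(g)$. Since $\beta\leqslant\theta$ by monicity of $g$, every contributing $\alpha$ satisfies $\alpha=\gamma+\theta-\beta\geqslant\gamma$, and the maximality of $\gamma$ in $\supp(f)$ then forces $\alpha=\gamma$ and $\beta=\theta$; the sum collapses to $f_{[\gamma]}g_{[\theta]}=f_{[\gamma]}$.

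For (2), the $\supseteq$ inclusion uses (1) to place $\gamma+\theta$ in $\supp(f\cdot g)$, and verifies maximality by noting that if $\gamma+\theta\leqslant\delta=\alpha+\beta\in\supp(f\cdot g)$ then $\beta\leqslant\theta$ forces $\alpha\geqslant\gamma$, so $\alpha=\gamma$ and $\delta=\gamma+\theta$. The $\subseteq$ direction takes the natural decomposition $\delta=\alpha+\beta$ and lifts $\alpha$ to some $\alpha'\in\max(\supp(f))$ with $\alpha\leqslant\alpha'$; then (1) gives $\alpha'+\theta\in\supp(f\cdot g)$ dominating $\delta$, and maximality of $\delta$ forces equality. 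Part (4) follows in the same vein: the routine upper bound $\deg(f\cdot g)\leqslant\deg(f)+\deg(g)$ is matched from below by picking a degree-realizing $\gamma\in\supp(f)$, lifting to some $\gamma'\in\max(\supp(f))$ (which still realizes $\deg(f)$), and invoking (1) to exhibit a nonzero coefficient at $\gamma'+\theta$; the same invocation also yields the ``in particular'' clause.

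For (3), the $\supseteq$ inclusion again mirrors the proof of (2): given $\mu\leqslant\alpha\in\supp(f)$ and $\nu\leqslant\beta\in\supp(g)$, pick $\alpha'\in\max(\supp(f))$ above $\alpha$ so that $\alpha'+\theta\in\supp(f\cdot g)$ dominates $\mu+\nu$. The reverse direction is the only step not directly driven by monicity: given $\delta\leqslant\alpha+\beta$ with $\alpha\in\supp(f)$, $\beta\in\supp(g)$, I would split $\delta$ coordinate-wise by setting $\mu_i=\min(\delta_i,\alpha_i)$ and $\nu_i=\delta_i-\mu_i$; a short case check gives $\mu\leqslant\alpha$, $\nu\leqslant\beta$, and $\delta=\mu+\nu\in\Delta(\supp(f))+\Delta(\supp(g))$. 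Finally, (5) is an immediate application of (3): for $a\in A$ and $\delta\in\supp(h\cdot g)\subseteq\Delta(\supp(h))+\Delta(\supp(g))$, decompose $\delta=\mu+\nu$ with $\mu\leqslant\beta\in\supp(h)$, $\nu\in\Delta(\supp(g))$; then $a+\mu\leqslant a+\beta\in\Delta(\supp(f))$ by hypothesis, so $a+\delta=(a+\mu)+\nu\in\Delta(\supp(f))+\Delta(\supp(g))=\Delta(\supp(f\cdot g))$.

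The main obstacle is the coordinate-wise decomposition in the $\subseteq$ direction of part (3): one must notice that although an arbitrary $\delta\in\supp(f\cdot g)$ need not decompose as a sum of an element of $\supp(f)$ and an element of $\supp(g)$ (since cancellation among terms is permitted), the weaker goal of lying in $\Delta(\supp(f))+\Delta(\supp(g))$ can still be realized by the minimum trick above. Every remaining step is bookkeeping driven by the identity $(f\cdot g)_{[\gamma+\theta]}=f_{[\gamma]}$ for $\gamma\in\max(\supp(f))$, together with the two defining features of monicity, namely that $\theta$ dominates $\supp(g)$ and that $g_{[\theta]}=1_R$.
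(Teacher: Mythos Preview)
Your proposal is correct and follows essentially the same route as the paper: part (1) is proved identically, and the rest is driven by (1) together with the containment $\supp(f\cdot g)\subseteq\supp(f)+\supp(g)$ and the coordinate-wise splitting you isolate in the $\subseteq$ direction of (3). The only difference is organizational: the paper packages your inline arguments for (2) and (3) into two general set-theoretic observations---that if $A\subseteq B\subseteq\mathbb{N}^n$ with $B$ finite and $\max(B)\subseteq A$ then $\max(A)=\max(B)$ and $\Delta(A)=\Delta(B)$, and that $\Delta(C+D)=\Delta(C)+\Delta(D)$ for any $C,D\subseteq\mathbb{N}^n$---and applies them with $A=\supp(f\cdot g)$, $B=\supp(f)+\supp(g)$; this also lets the paper handle (5) by the single chain $A+\supp(h\cdot g)\subseteq A+\supp(h)+\supp(g)\subseteq\Delta(\supp(f))+\Delta(\supp(g))=\Delta(\supp(f\cdot g))$, avoiding your second invocation of (3).
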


\setlength{\parindent}{2em}
The following well known lemma, which will be used in Section 4, can be verified via some straightforward computation (see, e.g., \cite{11,23,24}).

\setlength{\parindent}{0em}
\begin{lemma}
Let $f\in\Omega$. Then, for any $(u,\alpha)\in R^{n}\times\mathbb{N}^{n}$, it holds that
$$\mbox{$f(x_1+u_1,\dots,x_n+u_n)_{[\alpha]}=\sum_{\gamma\in\supp(f)}\Phi((u,\alpha),\gamma)\cdot f_{[\gamma]}$}.$$
\end{lemma}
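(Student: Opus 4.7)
The plan is to reduce the claim to a single-monomial identity via $R$-linearity of the coefficient-extraction map, and then carry out a coordinatewise binomial expansion.

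First, I would write $f=\sum_{\gamma\in\supp(f)} f_{[\gamma]}\prod_{k=1}^{n}x_k^{\gamma_k}$, so that after the substitution $x_k\mapsto x_k+u_k$ we have
$$f(x_1+u_1,\dots,x_n+u_n)=\sum_{\gamma\in\supp(f)} f_{[\gamma]}\prod_{k=1}^{n}(x_k+u_k)^{\gamma_k}.$$
Since the operation $h\mapsto h_{[\alpha]}$ is $R$-linear in $h$, it suffices to verify, for each fixed $\gamma\in\mathbb{N}^{n}$, the monomial identity
$$\Bigl[\prod_{k=1}^{n}(x_k+u_k)^{\gamma_k}\Bigr]_{[\alpha]}=\Phi((u,\alpha),\gamma),$$
and then sum the resulting equalities weighted by $f_{[\gamma]}$ over $\gamma\in\supp(f)$.

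Second, I would apply the binomial theorem to each factor: $(x_k+u_k)^{\gamma_k}=\sum_{j_k=0}^{\gamma_k}\binom{\gamma_k}{j_k}u_k^{\gamma_k-j_k}x_k^{j_k}$. Multiplying these expansions together and collecting terms, the coefficient of $\prod_{k=1}^{n}x_k^{\alpha_k}$ in $\prod_{k=1}^{n}(x_k+u_k)^{\gamma_k}$ comes from the unique choice $j_k=\alpha_k$ for every $k\in[1,n]$, and this choice is admissible exactly when $0\leqslant\alpha_k\leqslant\gamma_k$ for all $k$, i.e., when $\alpha\leqslant\gamma$. In that case the coefficient is $\prod_{k=1}^{n}\binom{\gamma_k}{\alpha_k}u_k^{\gamma_k-\alpha_k}$; when $\alpha\nleqslant\gamma$, no admissible tuple contributes and the coefficient is $0$. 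Comparing with the two-case definition (2.11) of $\Phi$, this is exactly $\Phi((u,\alpha),\gamma)$.

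Since the proof is pure bookkeeping around the binomial theorem, there is no substantive obstacle; the only subtlety is matching the convention in the definition of $\Phi$, which uses an explicit case split on whether $\alpha\leqslant\gamma$ rather than the convention $\binom{\gamma_k}{\alpha_k}=0$ for $\alpha_k>\gamma_k$. Handling the $\alpha\nleqslant\gamma$ branch by directly noting that no monomial $x_k^{\alpha_k}$ with $\alpha_k>\gamma_k$ appears in the expansion of $(x_k+u_k)^{\gamma_k}$ disposes of this point cleanly.
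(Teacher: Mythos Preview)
Your proof is correct and is precisely the ``straightforward computation'' the paper alludes to; the paper does not spell out a proof but simply cites the literature and calls it well known. Your reduction to a single-monomial identity via $R$-linearity followed by a coordinatewise binomial expansion is exactly the intended verification.
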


\setlength{\parindent}{2em}
The following lemma, which will be used in Sections 4 and 5, is straightforward to verify.

\setlength{\parindent}{0em}
\begin{lemma}
Let $C\subseteq\mathbb{N}^{n}$. Then, $\mathbb{N}^{n}-\nabla(C)$ is finite if and only if for any $k\in[1,n]$, there exists $\beta\in C$ such that $(\forall~l\in[1,n]-\{k\}:\beta_l=0)$.
\end{lemma}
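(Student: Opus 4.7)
The plan is to prove the two directions separately, handling each by a short argument about the coordinate-wise partial order on $\mathbb{N}^{n}$.

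For the backward direction, I would assume that for each $k\in[1,n]$ there exists $\beta^{(k)}\in C$ with $\beta^{(k)}_{l}=0$ for all $l\in[1,n]-\{k\}$, and let $m_{k}=\beta^{(k)}_{k}\in\mathbb{N}$. The key observation is that for any $\gamma\in\mathbb{N}^{n}$ with $\gamma_{k}\geqslant m_{k}$ for at least one $k$, we immediately get $\beta^{(k)}\leqslant\gamma$ (since $\beta^{(k)}$ is zero outside position $k$), so $\gamma\in\nabla(C)$. Taking the contrapositive yields
\begin{equation*}
\mathbb{N}^{n}-\nabla(C)\subseteq\{\gamma\in\mathbb{N}^{n}\mid\gamma_{k}<m_{k}\text{ for all }k\in[1,n]\},
\end{equation*}
and the right-hand side is finite (with size $\prod_{k=1}^{n}m_{k}$, understood as $0$ if some $m_{k}=0$, which actually already forces $\nabla(C)=\mathbb{N}^{n}$ since then $\mathbf{0}\in C$).

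For the forward direction I would argue by contrapositive. Suppose there exists $k\in[1,n]$ such that every $\beta\in C$ has $\beta_{l}>0$ for some $l\in[1,n]-\{k\}$. For each $m\in\mathbb{N}$, let $\gamma^{(m)}\in\mathbb{N}^{n}$ be the vector with $\gamma^{(m)}_{k}=m$ and $\gamma^{(m)}_{l}=0$ for $l\neq k$. If some $\beta\in C$ satisfied $\beta\leqslant\gamma^{(m)}$, then $\beta_{l}\leqslant\gamma^{(m)}_{l}=0$ for every $l\neq k$, forcing $\beta_{l}=0$ for all $l\neq k$, contradicting the standing hypothesis. Hence $\gamma^{(m)}\notin\nabla(C)$ for every $m\in\mathbb{N}$, exhibiting an infinite subset of $\mathbb{N}^{n}-\nabla(C)$.

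There is no real obstacle here; the lemma is a routine combinatorial statement about the product order on $\mathbb{N}^{n}$. The only subtlety worth flagging is the degenerate case $m_{k}=0$ in the backward direction, which corresponds to $\mathbf{0}\in C$ and trivializes $\nabla(C)$; the inclusion above absorbs this case automatically and needs no separate treatment.
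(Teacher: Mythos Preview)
Your proof is correct and complete. The paper itself does not supply a proof of this lemma, stating only that it ``is straightforward to verify''; your argument is exactly the kind of routine verification the authors had in mind, so there is nothing to compare.
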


\setlength{\parindent}{2em}
The following three lemmas will be used in Section 5. Their proofs are included in Appendices B and C for completeness.

\setlength{\parindent}{0em}
\begin{lemma}
Let $(g_1,\dots,g_n)\in\Omega^{n}$ be a family of monic polynomials such that $g_k\in R[x_{k}]$ for all $k\in[1,n]$. Also fix $f\in\Omega$ such that $g_k\mid f$ for all $k\in[1,n]$. Then, it holds that $(\prod_{k=1}^{n}g_k)\mid f$.
\end{lemma}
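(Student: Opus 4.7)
The plan is to prove the statement by induction on $n$. The base case $n=1$ is immediate. For the induction step, assume the result for $n-1$ and apply it to $g_1,\dots,g_{n-1}$: since each $g_k \mid f$ for $k\in[1,n-1]$, we get $G := \prod_{k=1}^{n-1} g_k$ divides $f$, so write $f = h \cdot G$ for some $h \in \Omega$. It then suffices to show that $g_n \mid h$, for then $\prod_{k=1}^{n} g_k = G \cdot g_n$ divides $G \cdot h = f$.

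To show $g_n \mid h$, I would exploit that $g_n \in R[x_n]$ is monic in $x_n$ in the classical sense: writing $g_n = x_n^{d} + \sum_{j<d} c_j x_n^{j}$ with $c_j \in R$, this polynomial is monic over the coefficient ring $S := R[x_1,\dots,x_{n-1}]$ when we view $\Omega = S[x_n]$. Since division by a monic polynomial works in any commutative ring, we can perform the division
\[ h = g_n \cdot q' + r, \quad \deg_{x_n}(r) < d, \]
with $q', r \in S[x_n] = \Omega$. The goal is to prove $r=0$.

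Using $g_n \mid f$, write $f = g_n \cdot q$ for some $q \in \Omega$. Substituting gives $g_n q = f = hG = g_n q' G + rG$, hence $rG = g_n(q - q'G)$. Now I would compare $x_n$-degrees in $S[x_n]$: the left-hand side has $\deg_{x_n} < d$ (as $G \in S$ contributes nothing in $x_n$), while if $q - q'G \neq 0$ the right-hand side has $\deg_{x_n} \geq d$ because $g_n$ is monic in $x_n$ (so its leading coefficient $1_R$ cannot be annihilated). This forces $q - q'G = 0$, and consequently $rG = 0$.

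The last step is to conclude $r = 0$ from $rG = 0$. Here I would invoke Lemma 2.1(4) iteratively: each $g_k$ is monic, so multiplication by any $g_k$ preserves nonvanishing; applying this successively, if $r \neq 0$ then $r \cdot g_1 \neq 0$, then $r \cdot g_1 \cdot g_2 \neq 0$, and so on up to $r \cdot G \neq 0$, contradicting $rG = 0$. Hence $r = 0$, so $g_n \mid h$, completing the induction. The main subtlety to get right is the degree-comparison step in $S[x_n]$, which is where the monic assumption on $g_n$ and the fact that $G$ is free of $x_n$ both become essential; everything else is bookkeeping on top of Lemma 2.1(4).
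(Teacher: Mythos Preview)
Your proof is correct. Both your argument and the paper's proceed by induction on $n$, and in each case the induction step amounts to showing that if $G=\prod_{k=1}^{n-1}g_k$ and $g_n$ both divide $f$, then so does $Gg_n$. The difference lies in how this step is executed. The paper isolates a slightly more general auxiliary result (Lemma~B.1): whenever $g$ is monic with greatest exponent $\theta$ and $\rho$ is any polynomial whose support is ``orthogonal'' to $\theta$ (i.e., $\gamma\wedge\theta=\mathbf{0}$ for all $\gamma\in\supp(\rho)$), then $\rho\mid\psi$ and $g\mid\psi$ force $\rho g\mid\psi$. Its proof uses the multivariate reduction Lemma~3.1 together with a support argument. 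You instead work in the concrete presentation $\Omega=S[x_n]$ with $S=R[x_1,\dots,x_{n-1}]$, carry out ordinary univariate division by the $S$-monic polynomial $g_n$, and finish with an $x_n$-degree comparison plus Lemma~2.1(4). Your route is more elementary and avoids invoking Lemma~3.1 altogether, at the cost of being tailored to the specific situation $g_n\in R[x_n]$, whereas the paper's Lemma~B.1 is reusable in other contexts where the ``orthogonal support'' condition holds.
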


\setlength{\parindent}{0em}
\begin{lemma}
Let $\alpha\in\mathbb{N}^{n}$, $t\in\mathbb{N}$, and let
$$\mbox{$B=\{(\alpha_1\theta_1,\dots,\alpha_n\theta_n)\mid \theta\in\mathbb{N}^{n},\sum_{i=1}^{n}\theta_i=t\}$}.$$
Then, it holds that
\begin{equation}\mbox{$|\mathbb{N}^{n}-\nabla(B)|=\left(\prod_{i=1}^{n}\alpha_i\right)\cdot\binom{n+t-1}{n}$}.\end{equation}
Assume in addition that $\alpha\in(\mathbb{Z}^{+})^{n}$. Then, it holds that
\begin{equation}\mbox{$\mathbb{N}^{n}-\nabla(B)=\{\beta\in\mathbb{N}^{n}\mid\sum_{i=1}^{n}\left\lfloor\frac{\beta_i}{\alpha_i}\right\rfloor\leqslant t-1\}$}.\end{equation}
\end{lemma}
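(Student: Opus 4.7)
The plan is to establish (2.13) first and then derive (2.12) from it. For (2.13), I will directly unpack what $\beta\in\nabla(B)$ means: by the definitions of $\nabla$ and $B$, the condition $\beta\in\nabla(B)$ is equivalent to the existence of some $\theta\in\mathbb{N}^{n}$ with $\sum_{i=1}^{n}\theta_i=t$ and $\alpha_i\theta_i\leqslant\beta_i$ for each $i\in[1,n]$. Under the hypothesis $\alpha\in(\mathbb{Z}^{+})^{n}$, the inequality $\alpha_i\theta_i\leqslant\beta_i$ rewrites as $\theta_i\leqslant\left\lfloor\beta_i/\alpha_i\right\rfloor$. Hence such a $\theta$ exists if and only if $\sum_{i=1}^{n}\left\lfloor\beta_i/\alpha_i\right\rfloor\geqslant t$, and negating this yields (2.13).

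For (2.12), I would split into two cases according to whether all the $\alpha_i$ are positive. If $\alpha_j=0$ for some $j\in[1,n]$, then $\prod_{i=1}^{n}\alpha_i=0$, so it suffices to show $\nabla(B)=\mathbb{N}^{n}$. This follows by observing $\mathbf{0}\in B$: one may take $\theta_j=t$ and $\theta_k=0$ for $k\neq j$ when $t\geqslant 1$, and $\theta=\mathbf{0}$ when $t=0$. Otherwise $\alpha\in(\mathbb{Z}^{+})^{n}$ and (2.13) applies. Writing each $\beta_i=\alpha_i q_i+r_i$ with $q_i\in\mathbb{N}$ and $0\leqslant r_i\leqslant\alpha_i-1$ gives a bijection between the set described by (2.13) and pairs $(q,r)$ such that $q\in\mathbb{N}^{n}$ with $\sum_{i=1}^{n}q_i\leqslant t-1$ and $r\in\prod_{i=1}^{n}\{0,1,\dots,\alpha_i-1\}$. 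Each admissible $q$ contributes exactly $\prod_{i=1}^{n}\alpha_i$ choices of $r$, and the hockey stick identity yields $\sum_{k=0}^{t-1}\binom{n+k-1}{n-1}=\binom{n+t-1}{n}$ admissible $q$-tuples; multiplying gives the right-hand side of (2.12).

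No step should present a real obstacle, as the argument is essentially direct bookkeeping. The only points requiring some care are the degenerate cases where some $\alpha_i$ vanishes (handled by exhibiting $\mathbf{0}\in B$) and the boundary case $t=0$, where $\binom{n-1}{n}=0$ correctly matches $|\mathbb{N}^{n}-\nabla(B)|=0$ since $B=\{\mathbf{0}\}$ forces $\nabla(B)=\mathbb{N}^{n}$.
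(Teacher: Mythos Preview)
Your proposal is correct and follows essentially the same approach as the paper: both handle the degenerate case $\alpha\notin(\mathbb{Z}^{+})^{n}$ by exhibiting $\mathbf{0}\in B$, both prove (2.13) by rewriting $\alpha_i\theta_i\leqslant\beta_i$ as $\theta_i\leqslant\lfloor\beta_i/\alpha_i\rfloor$, and both derive (2.12) by counting fibers of the map $\beta\mapsto(\lfloor\beta_i/\alpha_i\rfloor)_i$ together with the identity $|\{\gamma\in\mathbb{N}^{n}:\sum_i\gamma_i\leqslant t-1\}|=\binom{n+t-1}{n}$. The only differences are cosmetic (you organize the case split inside the proof of (2.12) and invoke the hockey-stick identity explicitly, whereas the paper quotes the binomial count directly).
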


\setlength{\parindent}{0em}
\begin{lemma}
Let $\alpha\in\mathbb{N}^{n}$, $\gamma\in\mathbb{N}^{n}$ with $\gamma\leqslant\alpha$, and fix $t\in\mathbb{Z}^{+}$. Let
$$\mbox{$B=\{(\alpha_1\theta_1,\dots,\alpha_n\theta_n)\mid \theta\in\mathbb{N}^{n},\sum_{i=1}^{n}\theta_i=t\}$},$$
$$\mbox{$C=\{(\alpha_1\theta_1,\dots,\alpha_n\theta_n)+\alpha-\gamma\mid \theta\in\mathbb{N}^{n},\sum_{i=1}^{n}\theta_i=t-1\}$}.$$
Then, it holds that
$$\mbox{$|\mathbb{N}^{n}-\nabla(B\cup C)|=\left(\prod_{i=1}^{n}\alpha_i\right)\cdot\binom{n+t-1}{n}-\left(\prod_{i=1}^{n}\gamma_i\right)\cdot\binom{n+t-2}{n-1}$}.$$
\end{lemma}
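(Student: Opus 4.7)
The plan is to reduce the problem to Lemma 2.5 by computing the difference $|\nabla(B\cup C)|-|\nabla(B)|$, i.e.\ by counting the elements that $\nabla(C)$ adds to $\nabla(B)$. Since $\mathbb{N}^n-\nabla(B\cup C)=(\mathbb{N}^n-\nabla(B))\cap(\mathbb{N}^n-\nabla(C))$, we have
\begin{equation*}
|\mathbb{N}^n-\nabla(B\cup C)|=|\mathbb{N}^n-\nabla(B)|-\bigl|(\mathbb{N}^n-\nabla(B))\cap\nabla(C)\bigr|.
\end{equation*}
By Lemma 2.5 the first term equals $(\prod_i\alpha_i)\binom{n+t-1}{n}$, so it suffices to show that the intersection on the right has exactly $(\prod_i\gamma_i)\binom{n+t-2}{n-1}$ elements.

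First I would dispose of the degenerate case: if some $\alpha_k=0$, then $\gamma\leqslant\alpha$ forces $\gamma_k=0$, so the RHS is $0$; moreover taking $\theta=t\cdot e_k$ shows $\mathbf{0}\in B$, hence $\nabla(B)=\mathbb{N}^n$ and the LHS is also $0$. So from here on I assume $\alpha\in(\mathbb{Z}^{+})^{n}$, which allows me to use (2.13) from Lemma 2.5.

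Next I would characterize $\nabla(C)$ in analogy with (2.13). By definition, $\beta\in\nabla(C)$ iff there exists $\theta\in\mathbb{N}^n$ with $\sum_i\theta_i=t-1$ and $\alpha_i\theta_i\leqslant\beta_i-\alpha_i+\gamma_i$ for every $i$. Writing $\beta_i=\alpha_iq_i+r_i$ with $0\leqslant r_i<\alpha_i$, the bound on $\theta_i$ becomes $0\leqslant\theta_i\leqslant q_i-1+\mathbf{1}[r_i\geqslant\alpha_i-\gamma_i]$; letting $I(\beta)=\{i:r_i\geqslant\alpha_i-\gamma_i\}$, such a $\theta$ exists iff $\sum_i q_i+|I(\beta)|\geqslant n+t-1$. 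On the other hand (2.13) says $\beta\in\mathbb{N}^n-\nabla(B)$ iff $\sum_i q_i\leqslant t-1$. Adding the two inequalities and using $|I(\beta)|\leqslant n$ forces both of them to equalities: $\sum_i q_i=t-1$ and $I(\beta)=[1,n]$, i.e.\ $r_i\geqslant\alpha_i-\gamma_i$ for every $i$.

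The count is then a product: the number of $(q_1,\dots,q_n)\in\mathbb{N}^n$ with $\sum_i q_i=t-1$ is $\binom{n+t-2}{n-1}$, and for each coordinate independently $r_i$ ranges over $\{\alpha_i-\gamma_i,\alpha_i-\gamma_i+1,\dots,\alpha_i-1\}$, contributing exactly $\gamma_i$ choices. Multiplying gives $(\prod_i\gamma_i)\binom{n+t-2}{n-1}$, as required. The step I expect to be the most delicate is the tight squeeze that simultaneously pins down $\sum_i q_i=t-1$ and $I(\beta)=[1,n]$: everything else is bookkeeping, but that is where the cancellation between the two conditions has to be executed carefully, including the boundary corner cases $\gamma_i=0$ and $\gamma_i=\alpha_i$ (where the set of admissible $r_i$ is empty or the full range $\{0,\dots,\alpha_i-1\}$, respectively), so that the same formula holds uniformly.
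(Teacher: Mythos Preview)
Your approach is essentially the same as the paper's: both compute $|\nabla(C)\setminus\nabla(B)|$ by parametrizing via quotient--remainder (the paper writes $\beta=(\alpha_1\theta_1,\dots,\alpha_n\theta_n)+\tau$ with $\alpha-\gamma\leqslant\tau\leqslant\alpha-\mathbf{1}$, which is exactly your $(q,r)$), then invoke Lemma~2.5. One small caution at the step you already flagged as delicate: the ``iff'' for $\beta\in\nabla(C)$ also needs each upper bound $q_i-1+\mathbf{1}[i\in I(\beta)]\geqslant0$ (else no admissible $\theta_i$ exists), but this is automatic once the squeeze yields $I(\beta)=[1,n]$, so the final count stands.
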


\setlength{\parindent}{2em}
We end this section by considering a class of Gr\"{o}bner bases which will be used in Section 5. The first part of the following lemma is straightforward to verify, and the second part will be established in Appendix D.

\setlength{\parindent}{0em}
\begin{lemma}
Let $(g_1,\dots,g_n)\in\Omega^{n}$ be a family of monic polynomials such that $g_k\in R[x_{k}]$ for all $k\in[1,n]$. Then, the following two statements hold:

{\bf{(1)}}\,\,For any $\alpha\in\mathbb{N}^{n}$, $\prod_{k=1}^{n}{g_{k}}^{\alpha_{k}}$ is monic, and $(\deg(g_1)\alpha_{1},\cdots,\deg(g_n)\alpha_{n})$ is the greatest element of $\supp(\prod_{k=1}^{n}{g_{k}}^{\alpha_{k}})$;

{\bf{(2)}}\,\,Let $A$ be a finite subset of $\mathbb{N}^{n}$. Then, $(\prod_{k=1}^{n}{g_k}^{\alpha_{k}}\mid\alpha\in A)$ is a Gr\"{o}bner basis of $\langle\{\prod_{k=1}^{n}{g_k}^{\alpha_{k}}\mid\alpha\in A\}\rangle$.
\end{lemma}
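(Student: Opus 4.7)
The plan is to verify the Gr\"obner basis condition of Definition 2.3, namely that for every nonzero $\tau$ in $Q := \langle\{p_\alpha \mid \alpha \in A\}\rangle$ (where $p_\alpha := \prod_{k=1}^n g_k^{\alpha_k}$), some element of $\supp(\tau)$ dominates $\theta(r) := (d_1 r_1, \dots, d_n r_n)$ for some $r \in A$ (writing $d_k := \deg(g_k)$ and invoking part (1) to identify $\theta(r)$ as the greatest element of $\supp(p_r)$).

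The key tool I will develop is a \emph{$g$-adic expansion}: every $h \in \Omega$ admits a unique representation as a finite sum $h = \sum_{\mu \in \mathbb{N}^n} b_\mu \prod_{k=1}^n g_k^{\mu_k}$ with $\deg_{x_k}(b_\mu) < d_k$ for all $k \in [1,n]$. Existence follows by iteratively performing polynomial division by the monic $g_k \in R[x_k]$, which is valid over an arbitrary commutative ring thanks to monicity. For uniqueness, fix a monomial order $\preccurlyeq$ (say lex); by part (1) and Remark 2.1, $\prod g_k^{\mu_k}$ has $\preccurlyeq$-leading exponent $\theta(\mu)$ with leading coefficient $1_R$, so $b_\mu \prod g_k^{\mu_k}$ has $\preccurlyeq$-leading exponent $\lambda(b_\mu) + \theta(\mu)$, where $\lambda(b_\mu)$ denotes the $\preccurlyeq$-leading exponent of $b_\mu$. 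The degree constraint $\lambda(b_\mu)_k < d_k$ allows recovery $\mu_k = \lfloor (\lambda(b_\mu) + \theta(\mu))_k / d_k \rfloor$, so distinct nonzero summands produce distinct leading exponents; taking the $\preccurlyeq$-maximum among them isolates a unique summand whose leading coefficient survives without cancellation, forcing the sum to be nonzero unless all $b_\mu = 0$.

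Applying this to $\tau = \sum_{\alpha \in A} h_\alpha p_\alpha$ with $h_\alpha = \sum_\mu b_{\alpha,\mu} \prod g_k^{\mu_k}$, regrouping gives $\tau = \sum_\nu c_\nu \prod g_k^{\nu_k}$ with $c_\nu = \sum_{\alpha \in A,\, \alpha \leq \nu} b_{\alpha, \nu-\alpha}$. Each $c_\nu$ inherits $\deg_{x_k} < d_k$, so this is the unique $g$-adic expansion of $\tau$; in particular $c_\nu \neq 0$ forces $\nu \in \nabla(A)$, and some such $\nu$ exists since $\tau \neq 0$. Choose $\nu_0$ among $\{\nu : c_\nu \neq 0\}$ so that $\beta_0 := \lambda(c_{\nu_0}) + \theta(\nu_0)$ is $\preccurlyeq$-maximal. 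Repeating the recoverability argument, if $\beta_0 \in \supp(c_\nu \prod g_k^{\nu_k})$ for some $\nu$ with $c_\nu \neq 0$, then $\beta_0$ must equal the $\preccurlyeq$-leading exponent $\lambda(c_\nu) + \theta(\nu)$ of that summand (by $\preccurlyeq$-maximality), and then $\nu = \nu_0$ by the $\lfloor \cdot / d_k \rfloor$ recovery. Hence $\tau_{[\beta_0]}$ equals the leading coefficient of $c_{\nu_0}$, which is nonzero; so $\beta_0 \in \supp(\tau)$, and picking $r \in A$ with $r \leq \nu_0$ yields $\theta(r) \leq \theta(\nu_0) \leq \beta_0$, as required.

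The main obstacle is establishing uniqueness of the $g$-adic expansion over a general commutative ring $R$, where zero divisors could in principle cause leading-term cancellation. This is overcome by combining the monicity (leading coefficient $1_R$) of each $\prod g_k^{\mu_k}$ with the ``separate-variable box'' structure arising from $g_k \in R[x_k]$, which gives the clean recovery $\mu_k = \lfloor \beta_k / d_k \rfloor$ of the exponent $\mu$ from any leading position $\beta$, isolating each $\preccurlyeq$-maximal position to a single summand so that the surviving leading coefficient equals that of some nonzero $b_\mu$ (or $c_\nu$), and therefore cannot vanish.
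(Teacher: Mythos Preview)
Your argument is correct and takes a genuinely different route from the paper. The paper's Appendix~D establishes (2) via Buchberger's criterion: it shows (Lemma~D.2) that for any $\alpha,\beta$ the $\mathbf{S}$-polynomial $\mathbf{S}(\prod_k g_k^{\alpha_k},\prod_k g_k^{\beta_k})$ can be written as $h_1\prod_k g_k^{\alpha_k}+h_2\prod_k g_k^{\beta_k}$ with the required support containments, and then invokes the monic-polynomial version of Buchberger's theorem (Lemma~D.1). Your approach instead develops a unique $g$-adic expansion and argues directly from Definition~2.3 by locating, via a monomial order, a surviving leading exponent $\beta_0$ in $\supp(\tau)$ that dominates some $\theta(r)$ with $r\in A$. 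The paper's route is the standard computational-algebra verification and yields, as a byproduct, an explicit reduction of each $\mathbf{S}$-polynomial using only the two generators involved; your route is more self-contained (no appeal to Buchberger's theorem) and makes transparent the underlying free-module decomposition of $\Omega$ over the basis $\{x^\lambda\prod_k g_k^{\mu_k}:\lambda_k<d_k\}$.

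One small technical caveat: your recovery formula $\mu_k=\lfloor\beta_k/d_k\rfloor$ tacitly assumes $d_k=\deg(g_k)\geqslant1$. The lemma as stated allows $g_k=1_R$ (monic of degree~$0$); in that case the $g$-adic expansion as you formulate it does not exist. This is harmless, since if $d_k=0$ then $p_\alpha$ is independent of $\alpha_k$ and one may project $A$ onto the coordinates with $d_k\geqslant1$ without changing the ideal or the family of generators, but you should note this reduction explicitly.
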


\section{Characterizations of Gr\"{o}bner bases}

\setlength{\parindent}{0em}
\begin{lemma}
Let $\Lambda$ be a finite set, and let $(g(\lambda)\mid\lambda\in\Lambda)\in\Omega^{\Lambda}$ be a family of monic polynomials. For any $\lambda\in\Lambda$, let $\theta(\lambda)$ be the greatest element of $\supp(g(\lambda))$. Then, for any $f\in\Omega$, there exist $(p(\lambda)\mid\lambda\in\Lambda)\in\Omega^{\Lambda}$ and $\tau\in\Omega$ satisfying the following four conditions:

{\bf{(1)}}\,\,$f=(\sum_{\lambda\in\Lambda}p(\lambda)\cdot g(\lambda))+\tau$;

{\bf{(2)}}\,\,For any $\lambda\in\Lambda$, it holds that $\supp(p(\lambda))+\supp(g(\lambda))\subseteq\Delta(\supp(f))$;

{\bf{(3)}}\,\,For any $\lambda\in\Lambda$ and $\alpha\in\supp(\tau)$, it holds that $\theta(\lambda)\nleqslant\alpha$;

{\bf{(4)}}\,\,$\supp(\tau)\subseteq\Delta(\supp(f))$.
\end{lemma}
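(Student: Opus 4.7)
The plan is to carry out a version of the standard multivariable division algorithm, adapted to this setting where no ideal is fixed but only a family of monic ``reducers''. Following Remark 2.1, I would fix once and for all a monomial order $\preccurlyeq$ on $\mathbb{N}^{n}$ (for instance, the lexicographic order), which is well known to be a well-ordering refining the partial order $\leqslant$. The proof then proceeds by well-founded induction on $\max_{\preccurlyeq}(\supp(f))$, with the base case $f=0$ handled trivially by taking every $p(\lambda)$ and $\tau$ to be $0$.

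For the inductive step, write $\gamma=\max_{\preccurlyeq}(\supp(f))$ and split into two cases. In Case A, when $\theta(\lambda)\nleqslant\gamma$ for every $\lambda$, I would peel off the leading term by setting $f'=f-f_{[\gamma]}\prod_{i=1}^{n}x_{i}^{\gamma_{i}}$; applying induction to $f'$ to obtain $(p'(\lambda),\tau')$, I would then define $p(\lambda)=p'(\lambda)$ and $\tau=\tau'+f_{[\gamma]}\prod_{i=1}^{n}x_{i}^{\gamma_{i}}$. Since $\supp(f')\subseteq\supp(f)$ and the newly added exponent $\gamma$ satisfies $\theta(\lambda)\nleqslant\gamma$ by assumption, all four conditions transfer cleanly from $(p',\tau')$ to $(p,\tau)$. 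In Case B, when some $\lambda_{0}$ satisfies $\theta(\lambda_{0})\leqslant\gamma$, I would set $f'=f-f_{[\gamma]}\bigl(\prod_{i=1}^{n}x_{i}^{\gamma_{i}-\theta(\lambda_{0})_{i}}\bigr)g(\lambda_{0})$; by Lemma 2.1 $(1)$ and $(2)$, the subtracted polynomial has leading coefficient $f_{[\gamma]}$ at exponent $\gamma$ (so the $\gamma$-term cancels) and its support is contained in $\Delta(\{\gamma\})\subseteq\Delta(\supp(f))$. Applying induction to $f'$ and defining $p(\lambda_{0})=p'(\lambda_{0})+f_{[\gamma]}\prod_{i=1}^{n}x_{i}^{\gamma_{i}-\theta(\lambda_{0})_{i}}$, $p(\lambda)=p'(\lambda)$ for $\lambda\neq\lambda_{0}$, and $\tau=\tau'$ then completes this case.

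The termination of the induction rests on the fact that in both cases $\max_{\preccurlyeq}(\supp(f'))\prec\gamma$: in Case A this holds by construction, and in Case B it follows because every exponent of $g(\lambda_{0})$ other than $\theta(\lambda_{0})$ is $\prec\theta(\lambda_{0})$ (by the defining properties of a monomial order), so each new exponent $(\gamma-\theta(\lambda_{0}))+\beta$ with $\beta\in\supp(g(\lambda_{0}))$ and $\beta\neq\theta(\lambda_{0})$ satisfies $(\gamma-\theta(\lambda_{0}))+\beta\prec\gamma$. The main bookkeeping obstacle will be verifying conditions $(2)$ and $(4)$ under this induction, especially the inclusion $\supp(p(\lambda_{0}))+\supp(g(\lambda_{0}))\subseteq\Delta(\supp(f))$ in Case B; this boils down to $(\gamma-\theta(\lambda_{0}))+\supp(g(\lambda_{0}))\subseteq\Delta(\{\gamma\})$, which follows from the monicity of $g(\lambda_{0})$ since every $\beta\in\supp(g(\lambda_{0}))$ satisfies $\beta\leqslant\theta(\lambda_{0})$ and hence $(\gamma-\theta(\lambda_{0}))+\beta\leqslant\gamma$. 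Combined with $\supp(f')\subseteq\Delta(\supp(f))$ (which yields $\Delta(\supp(f'))\subseteq\Delta(\supp(f))$), all inductive conditions propagate to the outer step.
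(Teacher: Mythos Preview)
Your argument is correct and essentially the textbook multivariable division algorithm. The paper proceeds somewhat differently: instead of fixing a monomial order on $\mathbb{N}^{n}$ and inducting on the leading exponent, it defines an order $\preccurlyeq$ on the collection $\Gamma$ of all finite subsets of $\mathbb{N}^{n}$ (where $A\preccurlyeq B$ iff $A=B$ or some $\beta\in B\setminus A$ dominates every $\alpha\in A\setminus B$ in the partial order $\leqslant$), asserts that $(\Gamma,\preccurlyeq)$ satisfies the minimal condition, and then inducts on $\supp(f)$ itself. In that scheme the base case is not $f=0$ but any $f$ with no reducible term (i.e., $\theta(\lambda)\nleqslant\alpha$ for every $\lambda,\alpha$), which is dispatched at once by $p(\lambda)=0$, $\tau=f$; the inductive step then picks \emph{any} $\gamma\in\supp(f)$ with $\theta(t)\leqslant\gamma$ (not necessarily the $\preccurlyeq$-maximum) and reduces. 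Consequently the paper has no analogue of your Case~A peeling step. Your approach has the advantage that the well-foundedness of a monomial order is standard and requires no further justification, whereas the paper leaves the minimal-condition claim for $(\Gamma,\preccurlyeq)$ unproved; on the other hand, the paper's setup avoids the term-by-term bookkeeping of Case~A by treating the entire irreducible remainder in one stroke.
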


\begin{proof}
Let $\Gamma\triangleq\{A\subseteq\mathbb{N}^{n}\mid\text{$A$ is finite}\}$, and let $\preccurlyeq$ be an order on $\Gamma$ defined as follows: For any $A,B\in\Gamma$, $A\preccurlyeq B$ if and only if either $A=B$ or there exists $\beta\in B-A$ such that $\alpha\leqslant \beta$ for all $\alpha\in A-B$. One can check that $(\Gamma,\preccurlyeq)$ satisfies the minimal condition, i.e., for any $H\subseteq\Gamma$ with $H\neq\emptyset$, $H$ contains a minimal element with respect to $\preccurlyeq$.

\hspace*{4mm}\,\,Now let $f\in\Omega$. If $\theta(\lambda)\nleqslant\alpha$ for all $\lambda\in\Lambda$ and $\alpha\in\supp(f)$, then $(p(\lambda)\mid\lambda\in\Lambda)$ defined as $p(\lambda)=0$ for all $\lambda\in\Lambda$ and $\tau=f$ satisfy Conditions (1)--(4). Therefore in the following, we assume that there exist $t\in\Lambda$, $\gamma\in\supp(f)$ such that $\theta(t)\leqslant\gamma$. Now let
\begin{equation}\mbox{$h\triangleq f-f_{[\gamma]}\cdot\left(\prod_{i=1}^{n}{x_{i}}^{\gamma_{i}-\theta(t)_{i}}\right)\cdot g(t)$}.\end{equation}
By (3.1) and the fact that $\theta(t)$ is the greatest element of $\supp(g(t))$, we have
\begin{equation}\hspace*{-1mm}\supp(h)-\supp(f)\subseteq\{\gamma-\theta(t)\}+\supp(g(t))\subseteq\Delta(\{\gamma\})\subseteq\Delta(\supp(f)).\end{equation}

From $g(t)_{[\theta(t)]}=1_{R}$ and (3.1), we deduce that $h_{[\gamma]}=f_{[\gamma]}-f_{[\gamma]}=0$, which implies that $\gamma\in\supp(f)-\supp(h)$. This, together with (3.2), implies that $\supp(h)\preccurlyeq\supp(f)$, $\supp(h)\neq\supp(f)$, $\supp(h)\subseteq\Delta(\supp(f))$, which further implies that $\Delta(\supp(h))\subseteq\Delta(\supp(f))$.

\hspace*{4mm}\,\,Since $(\Gamma,\preccurlyeq)$ satisfies the minimal condition, applying an induction argument to $h$, we can choose $(q(\lambda)\mid\lambda\in\Lambda)\in\Omega^{\Lambda}$ and $\tau\in\Omega$ satisfying the following three conditions:

$(i)$\,\,$h=(\sum_{\lambda\in\Lambda}q(\lambda)\cdot g(\lambda))+\tau$;

$(ii)$\,\,For any $\lambda\in\Lambda$, it holds that $\supp(q(\lambda))+\supp(g(\lambda))\subseteq\Delta(\supp(h))$;

$(iii)$\,\,For any $\lambda\in\Lambda$ and $\alpha\in\supp(\tau)$, it holds that $\theta(\lambda)\nleqslant\alpha$.

Now define $(p(\lambda)\mid\lambda\in\Lambda)\in\Omega^{\Lambda}$ as $p(\lambda)=q(\lambda)$ for all $\lambda\in\Lambda-\{t\}$, and $$\mbox{$p(t)=q(t)+f_{[\gamma]}\cdot(\prod_{i=1}^{n}{x_{i}}^{\gamma_{i}-\theta(t)_{i}})$}.$$
We claim that Conditions (1)--(4) hold true for $f$, $(p(\lambda)\mid\lambda\in\Lambda)$ and $\tau$. Indeed, via some straightforward verification, (1) follows from $(i)$ and (3.1), (2) follows from $(ii)$, (3.2) and the fact that $\Delta(\supp(h))\subseteq\Delta(\supp(f))$, (3) follows from $(iii)$, and (4) follows from (1) and (2), as desired.
\end{proof}

\setlength{\parindent}{2em}
Now we are ready to prove the main result of this section.

\setlength{\parindent}{0em}
\begin{theorem}
Let $\Lambda$ be a finite set, and let $(g(\lambda)\mid\lambda\in\Lambda)\in\Omega^{\Lambda}$ be a family of monic polynomials. For any $\lambda\in\Lambda$, let $\theta(\lambda)$ be the greatest element of $\supp(g(\lambda))$. Moreover, let $Q$ be an ideal of $\Omega$ with $\{g(\lambda)\mid\lambda\in\Lambda\}\subseteq Q$. Then, the following six statements are equivalent to each other:

{\bf{(1)}}\,\,$(g(\lambda)\mid\lambda\in\Lambda)$ is a Gr\"{o}bner basis of $Q$;

{\bf{(2)}}\,\,$Q\cap\delta(\mathbb{N}^{n}-\nabla(\{\theta(\lambda)\mid\lambda\in\Lambda\}))=\{0\}$;

{\bf{(3)}}\,\,For any $f\in Q$, there exists $(p(\lambda)\mid\lambda\in\Lambda)\in\Omega^{\Lambda}$ satisfying the following two conditions:

\hspace*{6mm}\,\,{\bf{3.1)}}\,\,$f=\sum_{\lambda\in\Lambda}p(\lambda)\cdot g(\lambda)$;

\hspace*{6mm}\,\,{\bf{3.2)}}\,\,For any $\lambda\in\Lambda$, it holds that $\supp(p(\lambda))+\supp(g(\lambda))\subseteq\Delta(\supp(f))$;

{\bf{(4)}}\,\,For any $f\in Q$ and $\beta\in\max(\supp(f))$, there exists $\xi\in\Lambda$ with $\theta(\xi)\leqslant\beta$. Alternatively speaking, for any $h\in \Omega$ such that there exists $\gamma\in\max(\supp(h))$ with $(\forall~\lambda\in\Lambda:\theta(\lambda)\nleqslant\gamma)$, it holds that $h\not\in Q$;

{\bf{(5)}}\,\,For any $f\in Q$, there exists $(p(\lambda)\mid\lambda\in\Lambda)\in\Omega^{\Lambda}$ satisfying the following two conditions:

\hspace*{6mm}\,\,{\bf{5.1)}}\,\,$f=\sum_{\lambda\in\Lambda}p(\lambda)\cdot g(\lambda)$;

\hspace*{6mm}\,\,{\bf{5.2)}}\,\,For any $\lambda\in\Lambda$, it holds that $\deg(p(\lambda))+\deg(g(\lambda))\leqslant\deg(f)$;

{\bf{(6)}}\,\,For any $f\in Q-\{0\}$ and $\beta\in\supp(f)$ with $\deg(f)=\sum_{i=1}^{n}\beta_i$, there exists $\xi\in\Lambda$ with $\theta(\xi)\leqslant\beta$. Alternatively speaking, for any $h\in \Omega-\{0\}$ such that there exists $\gamma\in\supp(h)$ with $\deg(h)=\sum_{i=1}^{n}\gamma_i$ and $(\forall~\lambda\in\Lambda:\theta(\lambda)\nleqslant\gamma)$, it holds that $h\not\in Q$.
\end{theorem}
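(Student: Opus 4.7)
The plan is to establish the cycle $(1) \Rightarrow (3) \Rightarrow (5) \Rightarrow (6) \Rightarrow (1)$, supplemented by the side loop $(3) \Rightarrow (4) \Rightarrow (1)$ and the direct equivalence $(1) \Leftrightarrow (2)$. The equivalence $(1) \Leftrightarrow (2)$ will be a straightforward unpacking of definitions: $f \in \delta(\mathbb{N}^{n} - \nabla(\{\theta(\lambda) \mid \lambda \in \Lambda\}))$ simply says that $\theta(\lambda) \nleqslant \alpha$ for every $\alpha \in \supp(f)$ and every $\lambda \in \Lambda$, which is the negation of the conclusion in Definition 2.3. Likewise, the implications $(4) \Rightarrow (1)$ and $(6) \Rightarrow (1)$ are immediate: for any $\tau \in Q - \{0\}$, the set $\max(\supp(\tau))$ is nonempty, and some $\beta \in \supp(\tau)$ with $\sum_{i=1}^{n} \beta_i = \deg(\tau)$ always exists, either of which supplies the witness demanded by Definition 2.3.

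For $(1) \Rightarrow (3)$, I would apply Lemma 3.1 to $f \in Q$, obtaining a decomposition $f = \sum_{\lambda} p(\lambda) \cdot g(\lambda) + \tau$ satisfying conditions (2)--(4) of that lemma. Since $Q$ is an ideal containing each $g(\lambda)$, the remainder $\tau = f - \sum_{\lambda} p(\lambda) \cdot g(\lambda)$ lies in $Q$; combining its support constraint from Lemma 3.1 (3) with the equivalent formulation (2) of the present theorem forces $\tau = 0$, yielding the desired representation.

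The more delicate step is $(3) \Rightarrow (4)$, which hinges on the interplay between coordinate-wise maximality and the support expansion of monic products. Given $\beta \in \max(\supp(f))$, the non-vanishing of $f_{[\beta]}$ forces $(p(\lambda) \cdot g(\lambda))_{[\beta]} \neq 0$ for some $\lambda$, so $\beta = \mu + \nu$ with $\mu \in \supp(p(\lambda))$ and $\nu \in \supp(g(\lambda))$. Since $\nu \leqslant \theta(\lambda)$, we get $\beta \leqslant \mu + \theta(\lambda)$; since $\theta(\lambda) \in \supp(g(\lambda))$, the support containment $\supp(p(\lambda)) + \supp(g(\lambda)) \subseteq \Delta(\supp(f))$ from (3) gives $\mu + \theta(\lambda) \leqslant \eta$ for some $\eta \in \supp(f)$. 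The chain $\beta \leqslant \mu + \theta(\lambda) \leqslant \eta$ then collapses by maximality of $\beta$ in $\supp(f)$, forcing $\beta = \mu + \theta(\lambda)$ and hence $\theta(\lambda) \leqslant \beta$.

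The implication $(3) \Rightarrow (5)$ will follow at once from Lemma 2.1 (4), since the support containment from (3) implies $\deg(p(\lambda)) + \deg(g(\lambda)) = \deg(p(\lambda) \cdot g(\lambda)) \leqslant \deg(f)$. For $(5) \Rightarrow (6)$, given $\beta \in \supp(f)$ with $\sum_{i=1}^{n} \beta_i = \deg(f)$, the same non-vanishing argument picks out some $\lambda$ and a decomposition $\beta = \mu + \nu$ with $\mu \in \supp(p(\lambda))$, $\nu \in \supp(g(\lambda))$; chaining $\sum_{i} \mu_i + \sum_{i} \nu_i = \deg(f) \geqslant \deg(p(\lambda)) + \deg(g(\lambda)) \geqslant \sum_{i} \mu_i + \sum_{i} \nu_i$ forces every inequality to be an equality, so $\sum_{i} \nu_i = \deg(g(\lambda)) = \sum_{i} \theta(\lambda)_i$, and combined with $\nu \leqslant \theta(\lambda)$ this yields $\nu = \theta(\lambda) \leqslant \beta$. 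The main obstacle throughout will be the careful bookkeeping in $(3) \Rightarrow (4)$ and $(5) \Rightarrow (6)$, where maximality (respectively degree equality) must be leveraged to pin down that the contribution from $g(\lambda)$ is exactly its leading exponent $\theta(\lambda)$.
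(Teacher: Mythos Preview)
Your proposal is correct and follows essentially the same approach as the paper. The only cosmetic differences are in the arrangement of the implication graph (the paper routes through $(4)\Rightarrow(6)$ rather than $(4)\Rightarrow(1)$) and in the argument for $(5)\Rightarrow(6)$: where you squeeze the degree chain to force $\nu=\theta(\lambda)$ directly, the paper instead observes that $\beta\in\max(\supp(p(\xi)\cdot g(\xi)))$ and invokes Lemma~2.1(2) to write $\beta\in\max(\supp(p(\xi)))+\{\theta(\xi)\}$.
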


\begin{proof}
We begin by noting that $(1)\Longleftrightarrow(2)$ immediately follows from Definition 2.3, (2.5) and (2.10). Therefore in what follows, we will show that $(1)\Longrightarrow(3)\Longrightarrow(4)\Longrightarrow(6)\Longrightarrow(1)$ and $(3)\Longrightarrow(5)\Longrightarrow(6)$.

\hspace*{4mm}\,\,$(1)\Longrightarrow(3)$\,\,Let $f\in Q$. By Lemma 3.1, we can choose $(p(\lambda)\mid\lambda\in\Lambda)\in\Omega^{\Lambda}$ and $\tau\in\Omega$ satisfying the following three conditions:

$(i)$\,\,$f=(\sum_{\lambda\in\Lambda}p(\lambda)\cdot g(\lambda))+\tau$;

$(ii)$\,\,For any $\lambda\in\Lambda$, it holds that $\supp(p(\lambda))+\supp(g(\lambda))\subseteq\Delta(\supp(f))$;

$(iii)$\,\,For any $\lambda\in\Lambda$ and $\alpha\in\supp(\tau)$, it holds that $\theta(\lambda)\nleqslant\alpha$.

By $(i)$, we have $\tau\in Q$, which, together with $(iii)$ and Definition 2.3, further implies that $\tau=0$. Now (3) immediately follows from $(i)$ and $(ii)$.

\hspace*{4mm}\,\,$(3)\Longrightarrow(4)$\,\,Let $f\in Q$. By (3), we can choose $(p(\lambda)\mid\lambda\in\Lambda)\in\Omega^{\Lambda}$ satisfying 3.1) and 3.2). Consider an arbitrary $\beta\in\max(\supp(f))$. By 3.1), there exists $\xi\in\Lambda$ such that $\beta\in\supp(p(\xi)\cdot g(\xi))$. Hence we can choose $\alpha\in\supp(p(\xi))$, $\gamma\in\supp(g(\xi))$ with $\beta=\alpha+\gamma$. Since $\theta(\xi)$ is the greatest element of $\supp(g(\xi))$, we have $\gamma\leqslant\theta(\xi)$, which further yields that $\beta\leqslant\alpha+\theta(\xi)$. By 3.2), we have $\alpha+\theta(\xi)\in\Delta(\supp(f))$. Thus we can choose $\mu\in\supp(f)$ with $\alpha+\theta(\xi)\leqslant\mu$. This, together with $\beta\leqslant\alpha+\theta(\xi)$, implies that $\beta\leqslant\mu$. From $\beta\in\max(\supp(f))$, $\mu\in\supp(f)$, we deduce that $\beta=\mu$. It then follows that $\theta(\xi)\leqslant\alpha+\theta(\xi)=\beta$, as desired.

\hspace*{4mm}\,\,$(4)\Longrightarrow(6)$\,\,Let $f\in Q-\{0\}$ and $\beta\in\supp(f)$ with $\deg(f)=\sum_{i=1}^{n}\beta_i$. Then, we have $\beta\in\max(\supp(f))$, and the desired result follows from (4).

\hspace*{4mm}\,\,$(6)\Longrightarrow(1)$\,\,Consider an arbitrary $f\in Q-\{0\}$. Since $f\neq0$, we can choose $\beta\in\supp(f)$ with $\deg(f)=\sum_{i=1}^{n}\beta_i$. By (6), we can further choose $\xi\in\Lambda$ with $\theta(\xi)\leqslant\beta$. Now (1) immediately follows from Definition 2.3.

\hspace*{4mm}\,\,$(3)\Longrightarrow(5)$\,\,This follows from the fact that 3.2) implies 5.2).

\hspace*{4mm}\,\,$(5)\Longrightarrow(6)$\,\,Let $f\in Q-\{0\}$. By (5), we can choose $(p(\lambda)\mid\lambda\in\Lambda)\in\Omega^{\Lambda}$ satisfying 5.1) and 5.2). Consider an arbitrary $\beta\in\supp(f)$ with $\deg(f)=\sum_{i=1}^{n}\beta_i$. By 5.1), there exists $\xi\in\Lambda$ with $\beta\in\supp(p(\xi)\cdot g(\xi))$. By 5.2), we have $\deg(p(\xi)\cdot g(\xi))\leqslant\sum_{i=1}^{n}\beta_i$. It then follows that $\beta\in\max(\supp(p(\xi)\cdot g(\xi)))$, which, together with (2) of Lemma 2.1, implies that $\beta\in\max(\supp(p(\xi)))+\{\theta(\xi)\}$, and hence $\theta(\xi)\leqslant\beta$, as desired.
\end{proof}

\begin{remark}
In the study of combinatorial Nullstellensatz, conclusions of the form similar to (5), (6) of Theorem 3.1 have been established in various settings: in Alon's combinatorial Nullstellensatz Theorems 1.1 and 1.2, in the Nullstellensatz with multiplicity [9, Theorem 3.1, Corollary 3.2], in the Nullstellensatz for multisets [24, Theorems 1 and 6], in Mezei's Nullstellensatz [29, Theorem 6.12], and in Nullstellensatz over a commutative ring, including [34, Theorem 7.3], [30, Theorem 1], [17, Theorem 3.8 and c) of Theorem 3.9], [25, Theorems 2.1 and 2.2] and [23, Theorems 3 and 4]. Moreover, conclusions of the form similar to (4) of Theorem 3.1 have been established in [27, Theorem 2], which is a generalization of Theorem 1.2, and in [11, Corollary 1.6], which considers Nullstellensatz for multisets. In this sense, Theorem 3.1 reveals the essence of Alon's combinatorial Nullstellensatz and some of its generalizations from a Gr\"{o}bner basis perspective. We also note that to the best of our knowledge, (3) of Theorem 3.1 seems to be a new characterization.
\end{remark}

\setlength{\parindent}{2em}
As a first application of Theorem 3.1, in the following example, we recover Alon's combinatorial Nullstellensatz (Theorems 1.1 and 1.2) and Laso\'{n}'s generalization of Theorem 1.2 (see [27, Theorem 2]), and also derive an equivalent version of Theorem 1.1.

\setlength{\parindent}{0em}
\begin{example}
Suppose that $R$ is an integral domain. Let $S_1,\dots,S_n$ be nonempty finite subsets of $R$, $g_i=\prod_{u\in S_i}(x_i-u)$ for all $i\in[1,n]$, and let
$$\mbox{$Q=\{f\in\Omega\mid \text{$f(a_1,\dots,a_n)=0$ for all $a\in\prod_{i=1}^{n}S_i$}\}$}.$$
Apparently, $(g_1,\dots,g_n)$ is a tuple of monic polynomials; for any $k\in[1,n]$, $\theta(k)\triangleq(\underbrace{0,\dots,0}_{k-1\mbox{\scriptsize}},|S_k|,\underbrace{0,\dots,0}_{n-k\mbox{\scriptsize}})$ is the greatest element of $\supp(g_k)$; and moreover, $Q$ is an ideal of $\Omega$ with $\{g_1,\dots,g_n\}\subseteq Q$. By [3, Lemma 2.1], we have $$Q\cap\delta(\mathbb{N}^{n}-\nabla(\{\theta(1),\dots,\theta(n)\}))=\{0\}.$$
It then follows from Theorem 3.1 that $(g_1,\dots,g_n)$ is a Gr\"{o}bner basis of $Q$, as has been shown in [32, Section 6]. Moreover, (5) of Theorem 3.1 boils down to Theorem 1.1, (6) of Theorem 3.1 boils down to Theorem 1.2, (4) of Theorem 3.1 boils down to Laso\'{n}'s result [27, Theorem 2], and (3) of Theorem 3.1 boils down to an equivalent version of Theorem 1.1.
\end{example}

\section{A Nullstellensatz derived from Theorem 3.1}

\setlength{\parindent}{2em}
Throughout this section, we let $S_{1},\dots,S_{n}$ be finite subsets of $R$, and let $(B_a\mid a\in\prod_{i=1}^{n}S_{i})$ be a tuple of subsets of $\mathbb{N}^{n}$ such that for any $a\in\prod_{i=1}^{n}S_{i}$, $\mathbb{N}^{n}-\nabla(B_a)$ is finite. Moreover, we let
\begin{equation}\mbox{$\zeta_1\triangleq\sum_{a\in\prod_{i=1}^{n}S_{i}}|\mathbb{N}^{n}-\nabla(B_{a})|$},\end{equation}
\begin{equation}\hspace*{-11mm}Q\triangleq\{f\in\Omega\mid \mbox{$\supp(f(x_{1}+a_{1},\dots,x_{n}+a_{n}))\subseteq\nabla(B_{a})$ for all $a\in\prod_{i=1}^{n}S_{i}$}\}.\end{equation}
It is straightforward to verify that $Q$ is an ideal of $\Omega$.

\setlength{\parindent}{0em}
\begin{lemma}
Suppose that for any $i\in[1,n]$, $S_{i}$ satisfies Condition (F) in $R$. Then, $\Omega/Q$ is a finitely generated free $R$-module with $\rank_{R}(\Omega/Q)=\zeta_1$.
\end{lemma}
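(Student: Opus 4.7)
The plan is to write $Q$ as an intersection of pairwise comaximal ideals indexed by $\prod_{i=1}^{n}S_{i}$ and apply the Chinese Remainder Theorem. For each $a\in\prod_{i=1}^{n}S_{i}$, set
$$I_a=\{f\in\Omega\mid \supp(f(x_1+a_1,\dots,x_n+a_n))\subseteq\nabla(B_a)\}.$$
From (4.2) we have $Q=\bigcap_{a}I_a$, and each $I_a$ is an ideal of $\Omega$: the shift map $T_a:f(x)\mapsto f(x_1+a_1,\dots,x_n+a_n)$ is a ring automorphism of $\Omega$, $\delta(\nabla(B_a))$ is an ideal of $\Omega$ because $\nabla(B_a)$ is closed upward under $\leqslant$, and $I_a=T_a^{-1}(\delta(\nabla(B_a)))$.

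My first verification is that $\Omega/I_a$ is $R$-free of rank $|\mathbb{N}^{n}-\nabla(B_a)|$. Consider the $R$-linear map
$$\psi_a:\Omega\longrightarrow\delta(\mathbb{N}^{n}-\nabla(B_a)),\qquad \psi_a(f)=\sum_{\alpha\in\mathbb{N}^{n}-\nabla(B_a)}f(x_1+a_1,\dots,x_n+a_n)_{[\alpha]}\,\prod_{i=1}^{n}{x_i}^{\alpha_i}.$$
Its kernel is exactly $I_a$, and it is surjective, since any $g$ in the codomain is $\psi_a(g(x_1-a_1,\dots,x_n-a_n))$. Because the codomain is $R$-free on the finitely many monomials with exponent in $\mathbb{N}^{n}-\nabla(B_a)$, the rank claim follows.

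The core of the argument, and the only place where Condition (F) is essential, is the pairwise comaximality of the family $(I_a)$. Given distinct $a,b\in\prod_{i=1}^{n}S_{i}$, pick $i\in[1,n]$ with $a_i\neq b_i$; by Condition (F), $b_i-a_i$ is a unit of $R$. Since $(x_i-a_i)-(x_i-b_i)=b_i-a_i$, the principal ideals $(x_i-a_i)R[x_i]$ and $(x_i-b_i)R[x_i]$ are comaximal in $R[x_i]$, and therefore so are their $M$-th powers for every $M\in\mathbb{Z}^{+}$. Applying Lemma 2.3 to $B_a$ with $k=i$ produces some $\beta\in B_a$ whose only possibly nonzero coordinate is $\beta_i$, so for $M\geqslant\beta_i$ one has $T_a((x_i-a_i)^{M})=x_i^{M}$ with support contained in $\nabla(B_a)$, and hence $(x_i-a_i)^{M}\in I_a$. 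Symmetrically $(x_i-b_i)^{M}\in I_b$ for $M$ large enough, and writing $1$ as an $R[x_i]$-combination of these two powers produces $1\in I_a+I_b$.

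With pairwise comaximality in hand, the Chinese Remainder Theorem gives $\Omega/Q\cong\prod_{a\in\prod_{i=1}^{n}S_{i}}\Omega/I_a$ as $R$-modules, and this finite product is $R$-free of rank $\sum_{a}|\mathbb{N}^{n}-\nabla(B_a)|=\zeta_{1}$, which finishes the proof. I expect the comaximality step to be the main obstacle, since it is the only step that consumes Condition (F), and it requires choosing the exponent $M$ uniformly large (with respect to both $B_a$ and $B_b$) so that the one-variable witness polynomials $(x_i-a_i)^M$ and $(x_i-b_i)^M$ land inside $I_a$ and $I_b$ respectively.
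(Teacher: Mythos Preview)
Your proof is correct and follows essentially the same route as the paper: define the per-point ideals $I_a$ (the paper calls them $J_a$), show each quotient $\Omega/I_a$ is free of rank $|\mathbb{N}^{n}-\nabla(B_a)|$, use Lemma~2.3 together with Condition~(F) to exhibit suitable powers $(x_i-a_i)^{M}\in I_a$ and $(x_i-b_i)^{M}\in I_b$ witnessing comaximality, and then apply the Chinese Remainder Theorem. The paper is slightly terser (it only asserts that $\Omega/J_a$ is free of the stated rank and uses possibly different exponents $p,q$ rather than a single $M$), and it explicitly separates the degenerate case $\prod_{i=1}^{n}S_i=\emptyset$, but these are cosmetic differences rather than a different argument.
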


\begin{proof}
First, consider an arbitrary $a\in\prod_{i=1}^{n}S_{i}$. Let
$$J_a\triangleq\{f\in\Omega\mid \supp(f(x_{1}+a_{1},\dots,x_{n}+a_{n}))\subseteq\nabla(B_{a})\}.$$
Apparently, $J_a$ is an ideal of $\Omega$. Moreover, since $\mathbb{N}^{n}-\nabla(B_a)$ is finite, it follows from Lemma 2.3 that for any $m\in[1,n]$, there exists $p\in\mathbb{N}$ with $(x_m-a_m)^{p}\in J_a$.

\hspace*{4mm}\,\,Next, let $b,d\in\prod_{i=1}^{n}S_{i}$ with $b\neq d$. We will show that
\begin{equation}J_b+J_d=\Omega.\end{equation}
Indeed, fix $m\in[1,n]$ with $b_{m}\neq d_{m}$. Since $S_{m}$ satisfies Condition (F) in $R$, by Definition 2.2, $d_m-b_m$ is a multiplicative unit of $R$. It then follows that $\langle\{x_{m}-b_{m},x_{m}-d_{m}\}\rangle=\Omega$, which implies that $\langle\{(x_{m}-b_{m})^{p},(x_{m}-d_{m})^{q}\}\rangle=\Omega$ for all $p,q\in\mathbb{N}$. By the previous paragraph, we can choose $p,q\in\mathbb{N}$ such that $(x_m-b_m)^{p}\in J_b$, $(x_m-d_m)^{q}\in J_d$, which further implies (4.3), as desired.

\hspace*{4mm}\,\,Note that if $\prod_{i=1}^{n}S_{i}=\emptyset$, then the desired result can be readily verified. Therefore in what follows, we assume $\prod_{i=1}^{n}S_{i}\neq\emptyset$. Then, we have $Q=\bigcap_{a\in\prod_{i=1}^{n}S_{i}}J_a$. For any $a\in\prod_{i=1}^{n}S_{i}$, it is straightforward to verify that $\Omega/J_a$ is a finitely generated free $R$-module with $\rank_{R}(\Omega/J_a)=|\mathbb{N}^{n}-\nabla(B_{a})|$. This, together with (4.3) and the Chinese remainder theorem (see [6, Proposition 1.10]), leads to the fact that $\Omega/Q$ is a finitely generated free $R$-module with
$$\mbox{$\rank_{R}(\Omega/Q)=\sum_{a\in\prod_{i=1}^{n}S_{i}}\rank_{R}(\Omega/J_a)=\sum_{a\in\prod_{i=1}^{n}S_{i}}|\mathbb{N}^{n}-\nabla(B_{a})|=\zeta_1$},$$
as desired.
\end{proof}

\setlength{\parindent}{2em}
From now on, we let $\Lambda$ be a finite set, and let $(g(\lambda)\mid\lambda\in\Lambda)\in Q^{\Lambda}$ be a family of monic polynomials. For any $\lambda\in\Lambda$, let $\theta(\lambda)$ be the greatest element of $\supp(g(\lambda))$. Moreover, let $D=\{\theta(\lambda)\mid\lambda\in\Lambda\}$. Throughout the remainder of this section, we assume that $\mathbb{N}^{n}-\nabla(D)$ is finite, and let
\begin{equation}\mbox{$\zeta_2\triangleq|\mathbb{N}^{n}-\nabla(D)|$}.\end{equation}

\setlength{\parindent}{0em}
\begin{lemma}
{\bf{(1)}}\,\,Suppose that for any $i\in[1,n]$, $S_{i}$ satisfies Condition (F) in $R$. Then, we have $\zeta_1\leqslant \zeta_2$.
Moreover, if $\zeta_1=\zeta_2$, then $(g(\lambda)\mid\lambda\in\Lambda)$ is a Gr\"{o}bner basis of $Q$.

{\bf{(2)}}\,\,Write $G=\bigcup_{a\in\prod_{i=1}^{n}S_{i}}\{a\}\times(\mathbb{N}^{n}-\nabla(B_{a}))$, and suppose that $(g(\lambda)\mid\lambda\in\Lambda)$ is a Gr\"{o}bner basis of $Q$. Then, we have $\zeta_2\leqslant \zeta_1$. Assume in addition that $\zeta_1=\zeta_2$. Then, for any $\sigma\in R^{G}$, it holds that
\begin{equation}(\mbox{$\sum_{(v,\alpha)\in G}\sigma_{(v,\alpha)}\cdot \Phi((v,\alpha),\gamma)=0$ for all $\gamma\in \mathbb{N}^{n}-\nabla(D)$})\Longrightarrow\sigma=0.\end{equation}
\end{lemma}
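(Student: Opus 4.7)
The plan is to study the quotient $\Omega/Q$ from two perspectives and invoke Lemma 3.1 and Theorem 3.1 to connect them with the Gr\"obner basis property. The common starting point: by Lemma 3.1 applied to $(g(\lambda)\mid\lambda\in\Lambda)$, every $f\in\Omega$ is congruent modulo $\langle\{g(\lambda)\mid\lambda\in\Lambda\}\rangle\subseteq Q$ to some $\tau$ with $\supp(\tau)\subseteq\mathbb{N}^{n}-\nabla(D)$. Hence the residues $(x^{\beta}+Q\mid \beta\in\mathbb{N}^{n}-\nabla(D))$ form an $R$-generating family for $\Omega/Q$ of cardinality $\zeta_{2}$; moreover, by the equivalence of (1) and (2) in Theorem 3.1, the $R$-linear independence of these residues is equivalent to $(g(\lambda)\mid\lambda\in\Lambda)$ being a Gr\"obner basis of $Q$.

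For part (1), Lemma 4.1 identifies $\Omega/Q$ with a free $R$-module of rank $\zeta_{1}$. Any generating family of a free $R$-module of rank $\zeta_{1}$ over a commutative ring has cardinality at least $\zeta_{1}$, so the above yields $\zeta_{1}\leqslant\zeta_{2}$. If $\zeta_{1}=\zeta_{2}$, then the generating family provides a surjection $R^{\zeta_{1}}\twoheadrightarrow\Omega/Q\cong R^{\zeta_{1}}$, which by the Vasconcelos theorem (a surjective endomorphism of a finitely generated module over a commutative ring is an isomorphism) must be injective as well; hence the generators are linearly independent, and $(g(\lambda)\mid\lambda\in\Lambda)$ is a Gr\"obner basis of $Q$ by Theorem 3.1.

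For part (2), I drop Condition (F) and introduce $\Psi\colon\Omega\rightarrow R^{G}$ defined by $\Psi(f)_{(v,\alpha)}=f(x_{1}+v_{1},\dots,x_{n}+v_{n})_{[\alpha]}$. By (4.2), $\ker\Psi=Q$, so $\Psi$ descends to an injection $\overline{\Psi}\colon\Omega/Q\hookrightarrow R^{G}$. Under the Gr\"obner basis hypothesis, the equivalence recalled in the first paragraph makes $(x^{\gamma}+Q\mid \gamma\in\mathbb{N}^{n}-\nabla(D))$ an $R$-basis of $\Omega/Q$, so $\overline{\Psi}$ is an injection $R^{\zeta_{2}}\hookrightarrow R^{\zeta_{1}}$. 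McCoy's theorem (any $b\times a$ matrix over a commutative ring with $a>b$ has a nonzero vector in its kernel) then forces $\zeta_{2}\leqslant\zeta_{1}$. By Lemma 2.2, the matrix of $\overline{\Psi}$ in the chosen bases is $N=(\Phi((v,\alpha),\gamma))_{(v,\alpha)\in G,\,\gamma\in\mathbb{N}^{n}-\nabla(D)}$.

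In the equality case $\zeta_{1}=\zeta_{2}$, $N$ is a square injective matrix. Via the adjugate identity $N\cdot\mathrm{adj}(N)=\det(N)\cdot I$, injectivity of $N$ forces $\det(N)$ to be a non-zero-divisor; since $\det(N^{T})=\det(N)$, the same identity applied to $N^{T}$ yields the injectivity of $N^{T}$. But injectivity of $N^{T}$ is precisely the implication (4.5). The main care needed will be to recognize (4.5) as the injectivity of the transpose rather than of $N$ itself, and to invoke the Vasconcelos and McCoy facts cleanly over an arbitrary commutative ring $R$.
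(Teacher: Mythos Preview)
Your proposal is correct and follows essentially the same route as the paper: both parts use Lemma 3.1 to exhibit the residues $(x^{\beta}+Q\mid\beta\in\mathbb{N}^{n}-\nabla(D))$ as generators of $\Omega/Q$, invoke Lemma 4.1 and a Vasconcelos-type surjectivity-implies-injectivity result (the paper cites [14, Theorem 5.36]) for part (1), and for part (2) translate the Gr\"obner condition via Theorem 3.1 into trivial kernel for the matrix $N=(\Phi((v,\alpha),\gamma))$, then apply McCoy's theorem (the paper's [14, Theorem 5.3]) to obtain $\zeta_{2}\leqslant\zeta_{1}$ and, in the square case, the injectivity of $N^{T}$ that is (4.5). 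Your explicit use of the adjugate identity to pass from injectivity of $N$ to that of $N^{T}$ simply unpacks what the paper's second citation of McCoy encodes.
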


\begin{proof}
{\bf{(1)}}\,\,We note that $\delta(\mathbb{N}^{n}-\nabla(D))$ is a finitely generated free $R$-module with $\rank_{R}(\delta(\mathbb{N}^{n}-\nabla(D)))=\zeta_2$. Define the $R$-module homomorphism $\pi:\delta(\mathbb{N}^{n}-\nabla(D))\longrightarrow\Omega/Q$ as
$\pi(h)=h+Q$. It follows from Lemma 3.1 that $\pi$ is surjective. This, together with Lemma 4.1, implies that $\zeta_1=\rank_{R}(\Omega/Q)\leqslant \zeta_2$, as desired. Now assume in addition that $\zeta_1=\zeta_2$. Then, by Lemma 4.1, $\Omega/Q$ and $\delta(\mathbb{N}^{n}-\nabla(D))$ are finitely generated free $R$-modules with the same rank. This, together with the surjectivity of $\pi$ and [14, Theorem 5.36], implies that $\pi$ is injective, and hence $Q\cap\delta(\mathbb{N}^{n}-\nabla(D))=\{0\}$. Now an application of Theorem 3.1 yields that $(g(\lambda)\mid\lambda\in\Lambda)$ is a Gr\"{o}bner basis of $Q$, as desired.

{\bf{(2)}}\,\,By Theorem 3.1, we have $Q\cap\delta(\mathbb{N}^{n}-\nabla(D))=\{0\}$. Let $W\triangleq\Phi\mid_{G\times (\mathbb{N}^{n}-\nabla(D))}$. Then, $W$ can be regarded as a matrix over $R$ whose rows are indexed by $G$ and columns are indexed by $\mathbb{N}^{n}-\nabla(D)$. Consider the following homogeneous system of linear equations with unknowns $\rho\in R^{\mathbb{N}^{n}-\nabla(D)}$:
\begin{equation}\mbox{$\sum_{\gamma\in \mathbb{N}^{n}-\nabla(D)}W((v,\alpha),\gamma)\cdot \rho_{\gamma}=0$ for all $(v,\alpha)\in G$}.\end{equation}
Let $\rho\in R^{\mathbb{N}^{n}-\nabla(D)}$ be a solution to (4.6), and we will show that $\rho=0$. Indeed, let $f=\sum_{\gamma\in \mathbb{N}^{n}-\nabla(D)}\rho_{\gamma}\cdot(\prod_{k=1}^{n}{x_k}^{\gamma_k})$. For any $(v,\alpha)\in G$, by Lemma 2.2 and $\supp(f)\subseteq \mathbb{N}^{n}-\nabla(D)$, we have
$$\mbox{$f(x_1+v_1,\dots,x_n+v_n)_{[\alpha]}=\sum_{\gamma\in \mathbb{N}^{n}-\nabla(D)}W((v,\alpha),\gamma)\cdot \rho_{\gamma}=0$}.$$
It follows that $f\in Q\cap\delta(\mathbb{N}^{n}-\nabla(D))=\{0\}$, which further implies that $\rho=0$, as desired. Therefore (4.6) only has trivial solution. Hence an application of McCoy's theorem [14, Theorem 5.3] yields that $|\mathbb{N}^{n}-\nabla(D)|\leqslant|G|$, and hence $\zeta_2\leqslant \zeta_1$, as desired. Now, assume in addition that $\zeta_1=\zeta_2$. Then, we have $|G|=|\mathbb{N}^{n}-\nabla(D)|$, and (4.6) only has trivial solution. It again follows from [14, Theorem 5.3] that (4.5) holds true, as desired.
\end{proof}

\setlength{\parindent}{2em}
Now we present some necessary and sufficient conditions for $(g(\lambda)\mid\lambda\in\Lambda)$ to be a Gr\"{o}bner basis of $Q$.

\setlength{\parindent}{0em}
\begin{theorem}
{\bf{(1)}}\,\,Suppose that for any $i\in[1,n]$, $S_{i}$ satisfies Condition (D) in $R$. Then, it holds that $\zeta_1\leqslant \zeta_2$. Moreover, $(g(\lambda)\mid\lambda\in\Lambda)$ is a Gr\"{o}bner basis of $Q$ if and only if $\zeta_1=\zeta_2$.

{\bf{(2)}}\,\,Suppose that $\prod_{i=1}^{n}S_{i}\neq\emptyset$, $\mathbf{0}\not\in B_a$ for all $a\in\prod_{i=1}^{n}S_{i}$, $\zeta_1\leqslant \zeta_2$, and $(g(\lambda)\mid\lambda\in\Lambda)$ is a Gr\"{o}bner basis of $Q$. Then, it holds that $\zeta_1=\zeta_2$, and for any $k\in[1,n]$, $S_{k}$ satisfies Condition (D) in $R$.
\end{theorem}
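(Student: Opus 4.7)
For Part (1), the idea is to reduce Condition (D) to Condition (F) by localization and then invoke Lemma 4.2. Let $T \subseteq R$ be the multiplicative subset generated by the differences $\{b - a : i \in [1,n],\, a \neq b \in S_i\}$. By Condition (D), $T$ consists of non-zero-divisors, so the canonical map $R \hookrightarrow T^{-1}R$ is injective, and each $S_i$ satisfies Condition (F) in $T^{-1}R$. Both $\zeta_1$ (which depends only on the $B_a$) and $\zeta_2$ (which depends on the greatest elements $\theta(\lambda)$, preserved since the leading coefficient $1_R$ stays nonzero in $T^{-1}R$) are unchanged under this base change. Applying Lemma 4.2 (1) over $T^{-1}R$ immediately yields $\zeta_1 \leqslant \zeta_2$ together with the implication that $\zeta_1 = \zeta_2$ makes $(g(\lambda)\mid\lambda\in\Lambda)$ a Gr\"{o}bner basis of the corresponding ideal $Q'$ in $T^{-1}R[x_1,\dots,x_n]$. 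Via Theorem 3.1 (2), the latter translates to $Q' \cap \delta(\mathbb{N}^n - \nabla(D)) = \{0\}$ in $T^{-1}R[x_1,\dots,x_n]$, which forces the analogous vanishing over $R$ by the injectivity of $R \hookrightarrow T^{-1}R$; hence $(g(\lambda)\mid\lambda\in\Lambda)$ is a Gr\"{o}bner basis of $Q$. The converse ``Gr\"{o}bner basis implies $\zeta_1 = \zeta_2$'' is immediate from Lemma 4.2 (2), which requires no condition on the $S_i$ and yields $\zeta_2 \leqslant \zeta_1$.

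For Part (2), the equality $\zeta_1 = \zeta_2$ is obtained at once: the hypothesis $\zeta_1 \leqslant \zeta_2$ combined with $\zeta_2 \leqslant \zeta_1$ from Lemma 4.2 (2) forces equality. This activates the second half of Lemma 4.2 (2), giving the implication (4.5): no nonzero $\sigma \in R^G$ can annihilate all the linear forms indexed by $\gamma \in \mathbb{N}^n - \nabla(D)$.

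The substantial step is extracting Condition (D) for each $S_k$ from (4.5). Fix $k \in [1,n]$ and $a \neq b$ in $S_k$, and suppose for contradiction that $w(b - a) = 0$ for some nonzero $w \in R$. Choose any $c \in \prod_{i=1}^n S_i$ (nonempty by hypothesis), and let $v^a, v^b \in \prod_{i=1}^n S_i$ be obtained from $c$ by replacing the $k$-th coordinate with $a$ and $b$ respectively. Since $\mathbf{0} \notin B_{v^a}$ and $\mathbf{0} \notin B_{v^b}$, both $(v^a, \mathbf{0})$ and $(v^b, \mathbf{0})$ lie in $G$ and are distinct. Define $\sigma \in R^G$ to be supported on exactly these two points with values $w$ and $-w$. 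Using $\Phi((v, \mathbf{0}), \gamma) = \prod_{j=1}^n v_j^{\gamma_j}$, the sum appearing in (4.5) reduces to $w \cdot \bigl(\prod_{j \neq k} c_j^{\gamma_j}\bigr) \cdot (a^{\gamma_k} - b^{\gamma_k})$; since $a^{\gamma_k} - b^{\gamma_k}$ is an $R$-multiple of $a - b$ (and trivially zero when $\gamma_k = 0$), this vanishes for every $\gamma \in \mathbb{N}^n - \nabla(D)$. Thus $\sigma$ satisfies the premise of (4.5), forcing $\sigma = 0$ and hence $w = 0$, a contradiction.

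I expect the main obstacle to be the construction of the test vector $\sigma$ in Part (2): one must exploit the assumption $\mathbf{0} \notin B_a$ to keep both test points inside $G$, choose $v^a, v^b$ differing only in a single coordinate so that the relevant product telescopes into a multiple of $a - b$, and verify the cancellation uniformly in $\gamma$. Part (1) is comparatively routine once the localization trick is identified, though care is needed to confirm that $\supp(g(\lambda))$, $D$, and the two counts $\zeta_1, \zeta_2$ are genuinely invariant under the faithful extension $R \hookrightarrow T^{-1}R$.
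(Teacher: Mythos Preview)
Your proposal is correct and follows essentially the same route as the paper: for Part (1) the paper also localizes (it uses the total quotient ring rather than the multiplicative set generated by the differences, but the effect is identical) and then applies Lemma 4.2(1), transferring the Gr\"{o}bner basis property back to $R$ via the injectivity of the localization map; for Part (2) the paper builds the same two-point test vector $\sigma$ supported on $(v^a,\mathbf{0})$ and $(v^b,\mathbf{0})$ and invokes (4.5) exactly as you do. The only cosmetic difference is that the paper pulls the Gr\"{o}bner basis conclusion back via Definition 2.3 directly rather than through the equivalent Theorem 3.1(2).
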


\begin{proof}
{\bf{(1)}}\,\,Let $E=\{c\in R\mid \text{$c$ is not a zero divisor of $R$}\}$, and let $T$ denote the ring of fractions of $R$ with respect to $E$ (see [6, Chapter 3]). Moreover, let $\Omega_1\triangleq T[x_1,\dots,x_n]$
denote the polynomial ring over $T$ in $n$ variables $x_1,\dots,x_n$. For any $h\in\Omega_1$, we let $\supp_1(h)=\{\alpha\in\mathbb{N}^{n}\mid h_{[\alpha]}\neq0\}$, where for any $\alpha\in\mathbb{N}^{n}$, $h_{[\alpha]}$ denotes the coefficient of $\prod_{i=1}^{n}{x_i}^{\alpha_i}$ in $h$.

\hspace*{4mm}\,\,The following two facts are straightforward to verify.

{\bf{Fact 1.}}\,\,$\varphi:R\longrightarrow T$ defined as $\varphi(a)=a/1_{R}$ is injective.

{\bf{Fact 2.}}\,\,Let $X\subseteq R$ such that $X$ satisfies Condition (D) in $R$, and let $Y=\{b/1_{R}\mid b\in X\}$. Then, for any $y_1,y_2\in Y$ with $y_1\neq y_2$, $y_2-y_1$ is a multiplicative unit of $T$, that is to say, $Y$ satisfies Condition (F) in $T$.

Based on Fact 1, we define $(P_1,\dots,P_n)$, $(L_e\mid e\in\prod_{i=1}^{n}P_i)$ and $Q_1$ as follows:
$$\mbox{$\forall~k\in[1,n]:P_k=\{u/1_{R}\mid u\in S_k\}$},$$
\begin{equation}\mbox{$\forall~a\in\prod_{i=1}^{n}S_i:L_{(a_1/1_{R},\dots,a_n/1_{R})}=B_a$};\end{equation}
$$Q_1\triangleq\{\eta\in\Omega_1\mid \mbox{$\supp_1(\eta(x_{1}+e_{1},\dots,x_{n}+e_{n}))\subseteq\nabla(L_{e})$ for all $e\in\prod_{i=1}^{n}P_{i}$}\}.$$
By Fact 2, for any $k\in[1,n]$, $P_{k}$ is a finite subset of $T$, and $P_{k}$ satisfies Condition (F) in $T$. By (4.7), for any $e\in\prod_{i=1}^{n}P_{i}$, it holds that $L_e\subseteq\mathbb{N}^{n}$, $\mathbb{N}^{n}-\nabla(L_e)$ is finite. Moreover, it holds that
\begin{equation}\mbox{$\sum_{e\in\prod_{i=1}^{n}P_{i}}|\mathbb{N}^{n}-\nabla(L_{e})|=\sum_{a\in\prod_{i=1}^{n}S_{i}}|\mathbb{N}^{n}-\nabla(B_a)|=\zeta_1$}.\end{equation}
We also note that $Q_1$ is an ideal of $\Omega_1$. Now define $(h(\lambda)\mid\lambda\in\Lambda)\in{\Omega_1}^{\Lambda}$ as
$$\text{$\forall~\lambda\in\Lambda:h(\lambda)_{[\alpha]}=g(\lambda)_{[\alpha]}/1_{R}$ for all $\alpha\in\mathbb{N}^{n}$}.$$
Consider an arbitrary $\mu\in\Lambda$. From Fact 1, we deduce that $\supp_1(h(\mu))=\supp(g(\mu))$, $\theta(\mu)$ is the greatest element of $\supp_1(h(\mu))$, $h(\mu)_{[\theta(\mu)]}=1_T$. Moreover, $g(\mu)\in Q$ implies that $h(\mu)\in Q_1$. Now with (4.8), an application of (1) of Lemma 4.2 to $T$ and $\Omega_1$ implies that $\zeta_1\leqslant \zeta_2$, as desired.

\hspace*{4mm}\,\,Next, we prove the ``moreover'' assertion. Note that the ``only if'' part follows from (2) of Lemma 4.2 and the proven fact that $\zeta_1\leqslant \zeta_2$, it remains to prove the ``if'' part. To this end, suppose that $\zeta_1=\zeta_2$. Then, with (4.8), an application of (1) of Lemma 4.2 to $T$ and $\Omega_1$ implies that in $\Omega_1$, $(h(\lambda)\mid\lambda\in\Lambda)$ is a Gr\"{o}bner basis of $Q_1$. Now consider an arbitrary $\tau\in Q-\{0\}$. Let $\eta\in\Omega_1$ be defined as $\eta_{[\alpha]}=\tau_{[\alpha]}/1_{R}$ for all $\alpha\in\mathbb{N}^{n}$. By Fact 1, we have $\supp(\tau)=\supp_1(\eta)$. In particular, $\eta\neq0$. Moreover, $\tau\in Q$ implies that $\eta\in Q_1$. Since $(h(\lambda)\mid\lambda\in\Lambda)$ is a Gr\"{o}bner basis of $Q_1$, by Definition 2.3, we can choose $\alpha\in\supp_1(\eta)$ and $r\in\Lambda$ with $\theta(r)\leqslant\alpha$. Hence we have $\alpha\in\supp(\tau)$, $r\in\Lambda$, $\theta(r)\leqslant\alpha$. It then follows from Definition 2.3 that $(g(\lambda)\mid\lambda\in\Lambda)$ is a Gr\"{o}bner basis of $Q$, as desired.

{\bf{(2)}}\,\,Let $G=\bigcup_{a\in\prod_{i=1}^{n}S_{i}}\{a\}\times(\mathbb{N}^{n}-\nabla(B_{a}))$. First, it follows from (2) of Lemma 4.2 that $\zeta_1=\zeta_2$, as desired. Next, for any $a\in\prod_{i=1}^{n}S_{i}$, by $\mathbf{0}\not\in B_a$, we have $(a,\mathbf{0})\in G$. Now let $k\in[1,n]$, and we will show that $S_{k}$ satisfies Condition (D) in $R$. Indeed, let $c,d\in S_k$ with $c\neq d$. Since $\prod_{i=1}^{n}S_{i}\neq\emptyset$, we can choose $y,w\in\prod_{i=1}^{n}S_{i}$ such that $y_k=c$, $w_k=d$, and $y_j=w_j$ for all $j\in[1,n]-\{k\}$. Consider an arbitrary $b\in R$ with $b\cdot(d-c)=0$. Then, we can choose $\sigma\in R^{G}$ such that $\sigma_{(y,\mathbf{0})}=-b$, $\sigma_{(w,\mathbf{0})}=b$, and $\sigma_{(a,\beta)}=0$ for all $(a,\beta)\in G-\{(y,\mathbf{0}),(w,\mathbf{0})\}$. Let $\gamma\in \mathbb{N}^{n}$ be arbitrary. By $b\cdot(d-c)=0$, we have $b\cdot(d^{^{\gamma_k}}-c^{^{\gamma_k}})=0$, which, together with (2.11) and the definitions of $\sigma,y,w$, further implies that
\begin{eqnarray*}
\begin{split}
\mbox{$\sum_{(v,\alpha)\in G}\sigma_{(v,\alpha)}\cdot \Phi((v,\alpha),\gamma)$}&=\mbox{$b\cdot(\Phi((w,\mathbf{0}),\gamma)-\Phi((y,\mathbf{0}),\gamma))$}\\
&=\mbox{$b\cdot(d^{^{\gamma_k}}-c^{^{\gamma_k}})\cdot(\prod_{i\in[1,n]-\{k\}}{w_i}^{\gamma_i})$}\\
&=0.
\end{split}
\end{eqnarray*}
It then follows from (2) of Lemma 4.2 that $\sigma=0$, which further implies that $b=\sigma_{(w,\mathbf{0})}=0$. Therefore $d-c$ is not a zero divisor of $R$. Hence from Definition 2.2, we deduce that $S_{k}$ satisfies Condition (D) in $R$, as desired.
\end{proof}

\setlength{\parindent}{2em}
The following Nullstellensatz, which immediately follows from Theorems 3.1 and 4.1, is one of the main results in this paper.

\setlength{\parindent}{0em}
\begin{theorem}
Suppose that $\zeta_1=\zeta_2$, and for any $i\in[1,n]$, $S_{i}$ satisfies Condition (D) in $R$. Consider an arbitrary $f\in Q$. Then, there exists $(p(\lambda)\mid\lambda\in\Lambda)\in\Omega^{\Lambda}$ with $f=\sum_{\lambda\in\Lambda}p(\lambda)\cdot g(\lambda)$ and
$$\forall~\lambda\in\Lambda:\supp(p(\lambda))+\supp(g(\lambda))\subseteq\Delta(\supp(f)).$$
Moreover, for any $\beta\in\max(\supp(f))$, there exists $\lambda\in\Lambda$ with $\theta(\lambda)\leqslant\beta$.
\end{theorem}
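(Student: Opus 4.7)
The plan is to derive Theorem 4.2 as an immediate corollary of Theorem 4.1(1) followed by the equivalences in Theorem 3.1; no new ingredient is needed, which is exactly what the paper announces just before the statement. Under the standing assumptions of Section 4, $(g(\lambda)\mid\lambda\in\Lambda)\in Q^{\Lambda}$ is a family of monic polynomials, and $D=\{\theta(\lambda)\mid\lambda\in\Lambda\}$ has $\mathbb{N}^{n}-\nabla(D)$ finite, so the hypotheses of both Theorem 3.1 (applied to this $Q$ and this family) and Theorem 4.1 are in force.

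The first step is to apply the ``if'' direction of the ``moreover'' clause of Theorem 4.1(1). That direction requires two hypotheses: that every $S_i$ satisfies Condition (D) in $R$, and that $\zeta_1=\zeta_2$. Both are assumed in Theorem 4.2. We may therefore conclude that $(g(\lambda)\mid\lambda\in\Lambda)$ is a Gr\"obner basis of $Q$ in the sense of Definition 2.3.

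Once this is established, both assertions of Theorem 4.2 are read off by applying Theorem 3.1 to the given $f\in Q$. For the first assertion, I would invoke the implication (1)$\Longrightarrow$(3): it supplies a family $(p(\lambda)\mid\lambda\in\Lambda)\in\Omega^{\Lambda}$ such that $f=\sum_{\lambda\in\Lambda}p(\lambda)\cdot g(\lambda)$ and $\supp(p(\lambda))+\supp(g(\lambda))\subseteq\Delta(\supp(f))$ for every $\lambda\in\Lambda$, which is exactly what is required. The ``moreover'' clause is then the implication (1)$\Longrightarrow$(4) of Theorem 3.1 applied to the same $f$: for every $\beta\in\max(\supp(f))$ there exists $\xi\in\Lambda$ with $\theta(\xi)\leqslant\beta$.

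There is essentially no obstacle here — the proof is a one-line combination of the two previous theorems, and the real work has already been done in proving Theorem 3.1 (the Gr\"obner-basis characterizations) and Theorem 4.1(1) (the rank count that promotes the tuple into a Gr\"obner basis under Condition (D) and $\zeta_1=\zeta_2$). The only point worth double-checking is that the objects in Theorem 4.2 — the monic family, the assignment $\theta$, the ideal $Q$, and the finiteness of $\mathbb{N}^{n}-\nabla(D)$ — coincide with those set up for Theorems 3.1 and 4.1, so that the citations go through without any reindexing.
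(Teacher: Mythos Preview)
Your proposal is correct and matches the paper's own approach exactly: the paper states just before Theorem 4.2 that it ``immediately follows from Theorems 3.1 and 4.1'' and gives no further proof, and your write-up spells out precisely this derivation --- Theorem 4.1(1) yields that the tuple is a Gr\"obner basis of $Q$, and then the implications $(1)\Rightarrow(3)$ and $(1)\Rightarrow(4)$ of Theorem 3.1 give the two conclusions.
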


\begin{remark}
Theorem 4.1 implies that with some additional assumptions, Condition (D) is a necessary and sufficient condition for $(g(\lambda)\mid\lambda\in\Lambda)$ to be a Gr\"{o}bner basis of $Q$. In fact, in Theorem 4.1, the assumption that all the $g(\lambda)$'s are monic can be weakened to that all the $g(\lambda)$'s have leading coefficient $1_R$ with respect to some fixed monomial order. Therefore Theorem 4.1 holds true in a more general setting. By contrast, our Nullstellensatz Theorem 4.2 relies heavily on the fact that all the $g(\lambda)$'s are monic.
\end{remark}

\section{Some corollaries of Theorems 4.1 and 4.2}
\setlength{\parindent}{2em}
In this section, we will use Theorems 4.1 and 4.2 to derive several Nullstellensatz.
\subsection{An extension of Mezei's Nullstellensatz}
\setlength{\parindent}{2em}
In [29, Theorem 6.12], Mezei establishes a general Nullstellensatz which includes both the Nullstellensatz for multisets in \cite{24} and the Nullstellensatz with multiplicity in \cite{9} as special cases. In this subsection, we further extend Mezei's result. Our extension will be an immediate corollary of Theorems 4.1 and 4.2.

Let $S_{1},\dots,S_{n}$ be finite subsets of $R$, $\Lambda$ be a finite set, and let
$$(\varepsilon(i,u,\lambda)\mid i\in[1,n],u\in S_{i},\lambda\in\Lambda)$$
be a tuple of natural numbers. Define $(g(\lambda)\mid \lambda\in\Lambda)$, $(\theta(\lambda)\mid\lambda\in\Lambda)$, $D$, $(B_a\mid a\in\prod_{i=1}^{n}S_{i})$ and $Q$ as follows:
\begin{equation}\mbox{$\forall~\lambda\in\Lambda:g(\lambda)=\prod_{i=1}^{n}\prod_{u\in S_{i}}(x_{i}-u)^{^{\varepsilon(i,u,\lambda)}}\in\Omega$},\end{equation}
\begin{equation}\mbox{$\forall~\lambda\in\Lambda:\theta(\lambda)=(\sum_{u\in S_{1}}\varepsilon(1,u,\lambda),\dots,\sum_{u\in S_{n}}\varepsilon(n,u,\lambda))\in\mathbb{N}^{n}$},\end{equation}
\begin{equation}D\triangleq\{\theta(\lambda)\mid\lambda\in\Lambda\}\subseteq\mathbb{N}^{n},\end{equation}
\begin{equation}\mbox{$\forall~a\in\prod_{i=1}^{n}S_{i}:B_{a}\triangleq\{(\varepsilon(1,a_{1},\lambda),\dots,\varepsilon(n,a_{n},\lambda))\mid\lambda\in\Lambda\}\subseteq\mathbb{N}^{n}$},\end{equation}
\begin{equation}\hspace*{-11mm}Q\triangleq\{f\in\Omega\mid \mbox{$\supp(f(x_{1}+a_{1},\dots,x_{n}+a_{n}))\subseteq\nabla(B_{a})$ for all $a\in\prod_{i=1}^{n}S_{i}$}\}.\end{equation}
We assume in addition that $\mathbb{N}^{n}-\nabla(D)$ is finite, and let
$$\zeta_2\triangleq|\mathbb{N}^{n}-\nabla(D)|.$$
By Lemma 2.3, one can check that for any $a\in\prod_{i=1}^{n}S_{i}$, $\mathbb{N}^{n}-\nabla(B_{a})$ is finite. As in Section 4, we let $$\mbox{$\zeta_1\triangleq\sum_{a\in\prod_{i=1}^{n}S_{i}}|\mathbb{N}^{n}-\nabla(B_{a})|$}.$$
Via some straightforward verification, we deduce that $Q$ is an ideal of $\Omega$, $(g(\lambda)\mid\lambda\in\Lambda)$ is a family of monic polynomials with $\{g(\lambda)\mid\lambda\in\Lambda\}\subseteq Q$; moreover, for any $\lambda\in\Lambda$, $\theta(\lambda)$ is the greatest element of $\supp(g(\lambda))$.

\setlength{\parindent}{0em}
\begin{lemma}
It holds that $\zeta_1\leqslant \zeta_2$.
\end{lemma}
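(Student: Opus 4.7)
The plan is to reduce to the case where the underlying ring is a field, in which case Condition (F) is automatic and Lemma 4.2(1) applies directly. The key observation is that both $\zeta_1$ and $\zeta_2$, as defined in Section 5.1, are invariants of the purely combinatorial data $(|S_1|,\dots,|S_n|)$ together with the tuple $(\varepsilon(i,u,\lambda)\mid i\in[1,n],u\in S_i,\lambda\in\Lambda)$, and do not depend on the ring $R$ or the actual ring-theoretic values of the elements of $S_i$. Indeed, $\theta(\lambda)$ in (5.2) is defined as a sum of the $\varepsilon(i,\cdot,\lambda)$'s, $B_a$ in (5.4) is a set of tuples of $\varepsilon$-values indexed by $\lambda$, and so $D$ and each $\nabla(B_a)$ live in $\mathbb{N}^n$ with structure determined only by the function $\varepsilon$ through the label $u\in S_i$.

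Concretely, the first step is to pick any field $K$ (say $K=\mathbb{Q}$) and finite subsets $T_1,\dots,T_n\subseteq K$ with bijections $\phi_i:S_i\to T_i$ for each $i\in[1,n]$; this is possible since $\mathbb{Q}$ is infinite. Define a primed tuple of natural numbers $(\varepsilon'(i,v,\lambda)\mid i\in[1,n],v\in T_i,\lambda\in\Lambda)$ by $\varepsilon'(i,\phi_i(u),\lambda)\triangleq\varepsilon(i,u,\lambda)$. Now run the construction of Section 5.1 inside $K[x_1,\dots,x_n]$ with the sets $T_i$ and the tuple $\varepsilon'$ in place of $S_i$ and $\varepsilon$, producing $g'(\lambda)$, $\theta'(\lambda)$, $D'$, $B'_b$ (for $b\in\prod_{i=1}^{n}T_i$), $Q'$, and associated quantities $\zeta'_1,\zeta'_2$.

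The second step is the bookkeeping: the product bijection $\phi:\prod_{i=1}^n S_i\to\prod_{i=1}^n T_i$, $a\mapsto(\phi_1(a_1),\dots,\phi_n(a_n))$, gives $B'_{\phi(a)}=B_a$ as subsets of $\mathbb{N}^n$ by construction, hence $|\mathbb{N}^n-\nabla(B'_{\phi(a)})|=|\mathbb{N}^n-\nabla(B_a)|$ for every $a$. Summing yields $\zeta'_1=\zeta_1$. Likewise $\theta'(\lambda)=\theta(\lambda)$ for every $\lambda\in\Lambda$, so $D'=D$ and $\zeta'_2=\zeta_2$. All finiteness hypotheses on $\mathbb{N}^n-\nabla(B'_{\phi(a)})$ and $\mathbb{N}^n-\nabla(D')$ transfer from the corresponding unprimed statements.

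The final step is to apply Lemma 4.2(1) in the primed setting: since $K$ is a field, every nonzero element of $K$ is a multiplicative unit, so each $T_i$ trivially satisfies Condition (F) in $K$. Lemma 4.2(1) then yields $\zeta'_1\leqslant\zeta'_2$, and by the identifications above this reads $\zeta_1\leqslant\zeta_2$, as desired. There is no serious obstacle; the only thing requiring care is the verification that the data entering the definitions of $\zeta_1$ and $\zeta_2$ is ring-independent, so that the transfer $R\leadsto K$ is legitimate, and this is immediate from inspection of (5.1)--(5.5).
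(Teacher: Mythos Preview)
Your proof is correct and follows essentially the same approach as the paper: both arguments transfer the combinatorial data $(\varepsilon(i,u,\lambda))$ to an auxiliary field (the paper uses $\mathbb{C}$, you use $\mathbb{Q}$) via bijections of the index sets $S_i$, observe that $\zeta_1$ and $\zeta_2$ are unchanged under this transfer, and then invoke Lemma~4.2(1) since Condition~(F) is automatic over a field. The paper phrases this as first noting the special case where Condition~(F) already holds and then reducing the general case to it, but the content is identical.
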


\begin{proof}
First, for the special case that all the $S_{i}$'s satisfy Condition (F) in $R$, the desired result is an immediate corollary of (1) of Lemma 4.2.

\hspace*{4mm}\,\,Next, we consider the general case. To this end, let $P_{1},\dots,P_{n}$ be finite subsets of $\mathbb{C}$ with $|P_i|=|S_i|$ for all $i\in[1,n]$. For any $i\in[1,n]$, fix a bijective map $\omega_i:P_i\longrightarrow S_i$. Define a tuple $(\sigma(i,v,\lambda)\mid i\in[1,n],v\in P_{i},\lambda\in\Lambda)$ as $\sigma(i,v,\lambda)=\varepsilon(i,\omega_i(v),\lambda)$. It can be readily verified that
$$\mbox{$\forall~\lambda\in\Lambda:(\sum_{v\in P_{1}}\sigma(1,v,\lambda),\dots,\sum_{v\in P_{n}}\sigma(n,v,\lambda))=\theta(\lambda)$},$$
$$\mbox{$\forall~d\in\prod_{i=1}^{n}P_{i}:\{(\sigma(1,d_{1},\lambda),\dots,\sigma(n,d_{n},\lambda))\mid\lambda\in\Lambda\}=B_{(\omega_1(d_1),\dots,\omega_n(d_n))}$}.$$
Since $\mathbb{C}$ is a field, for any $i\in[1,n]$, $P_{i}$ satisfies Condition (F) in $\mathbb{C}$. It then follows from the previous paragraph that $$\mbox{$\zeta_1=\sum_{d\in\prod_{i=1}^{n}P_{i}}|\mathbb{N}^{n}-\nabla(B_{(\omega_1(d_1),\dots,\omega_n(d_n))})|\leqslant|\mathbb{N}^{n}-\nabla(D)|=\zeta_2$},$$
as desired.
\end{proof}

\setlength{\parindent}{2em}
Now we are ready to prove the main result of this subsection.

\setlength{\parindent}{0em}
\begin{theorem}
{\bf{(1)}}\,\,Suppose that $\zeta_1=\zeta_2$, and for any $k\in[1,n]$, $S_{k}$ satisfies Condition (D) in $R$. Then, $(g(\lambda)\mid\lambda\in\Lambda)$ is a Gr\"{o}bner basis of $Q$. Further consider an arbitrary $f\in Q$. Then, there exists $(p(\lambda)\mid\lambda\in\Lambda)\in\Omega^{\Lambda}$ such that $f=\sum_{\lambda\in\Lambda}p(\lambda)\cdot g(\lambda)$ and
\begin{equation}\forall~\lambda\in\Lambda:\supp(p(\lambda))+\supp(g(\lambda))\subseteq\Delta(\supp(f)).\end{equation}
Moreover, for an arbitrary $\beta\in\max(\supp(f))$, there exists $\lambda\in\Lambda$ such that $\sum_{u\in S_k}\varepsilon(k,u,\lambda)\leqslant\beta_k$ for all $k\in[1,n]$.

{\bf{(2)}}\,\,Suppose that $\prod_{i=1}^{n}S_{i}\neq\emptyset$, and $\mathbf{0}\not\in B_a$ for all $a\in\prod_{i=1}^{n}S_{i}$. Then, $(g(\lambda)\mid\lambda\in\Lambda)$ is a Gr\"{o}bner basis of $Q$ if and only if $\zeta_1=\zeta_2$, and for any $k\in[1,n]$, $S_{k}$ satisfies Condition (D) in $R$.
\end{theorem}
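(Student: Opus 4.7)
The plan is to verify that the tuples defined by (5.1)--(5.5) fit the framework of Section~4 verbatim, and then read off both parts of the theorem as direct consequences of Theorems 4.1 and 4.2, with Lemma 5.1 acting as the bridge to Theorem 4.1(2) for part (2).

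First I would observe that the setup is already aligned with Section~4: the ideal $Q$ in (5.5) is an instance of the $Q$ defined in (4.2); $(g(\lambda)\mid\lambda\in\Lambda)$ is a family of monic polynomials lying in $Q$; for each $\lambda$, the vector $\theta(\lambda)$ given by (5.2) is the greatest element of $\supp(g(\lambda))$; $\mathbb{N}^{n}-\nabla(D)$ is finite by assumption; and $\mathbb{N}^{n}-\nabla(B_{a})$ is finite for every $a\in\prod_{i=1}^{n}S_{i}$ (as already noted via Lemma 2.3). The invariants $\zeta_{1}$ and $\zeta_{2}$ coincide with those of (4.1) and (4.4). All of this is pre-established in the paragraphs preceding the theorem, so I would invoke it without further verification.

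For part~(1), the hypotheses $\zeta_{1}=\zeta_{2}$ and ``each $S_{k}$ satisfies Condition~(D) in $R$'' are exactly the hypotheses of the ``if'' direction in the ``moreover'' part of Theorem~4.1(1), which yields at once that $(g(\lambda)\mid\lambda\in\Lambda)$ is a Gr\"{o}bner basis of $Q$. Under the same hypotheses, Theorem~4.2 then supplies, for any $f\in Q$, both the decomposition $f=\sum_{\lambda\in\Lambda}p(\lambda)\cdot g(\lambda)$ together with (5.6), and the conclusion that for every $\beta\in\max(\supp(f))$ there exists $\lambda\in\Lambda$ with $\theta(\lambda)\leqslant\beta$. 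The final ``moreover'' clause of part~(1) is simply the coordinate-wise unpacking of $\theta(\lambda)\leqslant\beta$ via the identity $\theta(\lambda)_{k}=\sum_{u\in S_{k}}\varepsilon(k,u,\lambda)$ recorded in (5.2).

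For part~(2), the ``if'' direction is the Gr\"{o}bner-basis assertion in part~(1). For the ``only if'' direction, assume that $(g(\lambda)\mid\lambda\in\Lambda)$ is a Gr\"{o}bner basis of $Q$; Lemma~5.1 supplies the inequality $\zeta_{1}\leqslant\zeta_{2}$, and combined with the standing hypotheses $\prod_{i=1}^{n}S_{i}\neq\emptyset$ and $\mathbf{0}\notin B_{a}$ for all $a$, this meets the full hypothesis list of Theorem~4.1(2), whose conclusion delivers $\zeta_{1}=\zeta_{2}$ together with Condition~(D) for each $S_{k}$. I do not expect any real obstacle here: the proof is essentially a checklist verifying that (5.1)--(5.5) instantiates the Section~4 framework, and the one substantive item specific to this subsection is Lemma~5.1, which is precisely what lets the ``only if'' direction of part~(2) appeal to Theorem~4.1(2) without independently postulating $\zeta_{1}\leqslant\zeta_{2}$.
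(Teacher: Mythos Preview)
Your proposal is correct and follows essentially the same route as the paper: part~(1) is deduced from Theorem~4.1(1), Theorem~4.2, and the identity (5.2), while part~(2) combines Lemma~5.1 (to obtain $\zeta_{1}\leqslant\zeta_{2}$) with Theorem~4.1. The paper's own proof is just this checklist, stated in a single sentence.
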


\begin{proof}
We note that (1) follows from (1) of Theorem 4.1, Theorem 4.2 and (5.2), and (2) follows from Lemma 5.1 and Theorem 4.1.
\end{proof}

\setlength{\parindent}{0em}
\begin{remark}
In [29, Theorem 6.12], Mezei derives his Nullstellensatz by giving a sufficient condition for $\zeta_1=\zeta_2$ to hold true (see [26, Lemma 6.11]). Therefore (1) of Theorem 5.1 extends Mezei's result from polynomials over a field to polynomials over a commutative ring; and from the conclusions of the form similar to (5) of Theorem 3.1 to the conclusions of the form similar to (3), (4) of Theorem 3.1; and to more general $g(\lambda)$'s. By contrast, we do not present explicit sufficient conditions for $\zeta_1=\zeta_2$ to hold true.
\end{remark}

\subsection{A common generalization of the Nullstellensatz in \cite{23,24} and \cite{9}}
\setlength{\parindent}{2em}
In this subsection, we give a common generalization of the Nullstellensatz for multisets established in K\'{o}s, R\'{o}nyai and M\'{e}sz\'{a}ros \cite{23,24} and the Nullstellensatz with multiplicity established in Ball and Serra \cite{9}.

\setlength{\parindent}{0em}
\begin{notation}
Let $(g_1,\dots,g_n)\in\Omega^{n}$ be a family of monic polynomials such that $g_k\in R[x_{k}]$ for all $k\in[1,n]$. Then, for any $t\in\mathbb{N}$, we let
\begin{equation}\mbox{$\mathcal{I}_t(g_1,\dots,g_n)=\langle\{\prod_{k=1}^{n}{g_k}^{\alpha_{k}}\mid\alpha\in \mathbb{N}^{n},\sum_{i=1}^{n}\alpha_i=t\}\rangle$}.\end{equation}
\end{notation}

\setlength{\parindent}{2em}
We collect some basic properties in the following proposition.

\setlength{\parindent}{0em}
\begin{proposition}
Let $(g_1,\dots,g_n)\in\Omega^{n}$ be a family of monic polynomials such that $g_k\in R[x_{k}]$ for all $k\in[1,n]$. Also fix $t\in\mathbb{N}$. Then, the following two statements hold:

{\bf{(1)}}\,\,$(\prod_{k=1}^{n}{g_k}^{\alpha_{k}}\mid\alpha\in \mathbb{N}^{n},\sum_{i=1}^{n}\alpha_i=t)$ is a Gr\"{o}bner basis of $\mathcal{I}_t(g_1,\dots,g_n)$;

{\bf{(2)}}\,\,Let $f\in\Omega$. Then, there uniquely exists $\psi\in \Omega$ such that $f-\psi\in \mathcal{I}_{t}(g_1,\dots,g_n)$, and for any $\beta\in\supp(\psi)$ and $\alpha\in \mathbb{N}^{n}$ with $\sum_{i=1}^{n}\alpha_{i}=t$, there exists $s\in[1,n]$ with $\beta_{s}\leqslant\deg(g_s)\alpha_{s}-1$. Moreover, such a $\psi$ satisfies that $\supp(\psi)\subseteq\Delta(\supp(f))$.
\end{proposition}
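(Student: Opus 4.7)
The plan is to derive both assertions directly from Lemma 2.8, Lemma 3.1 and Definition 2.3. Set $A \triangleq \{\alpha\in\mathbb{N}^{n}\mid \sum_{i=1}^{n}\alpha_i = t\}$; this is a finite subset of $\mathbb{N}^{n}$. For each $\alpha\in A$, write $g(\alpha)\triangleq\prod_{k=1}^{n}{g_k}^{\alpha_k}$, which by Lemma 2.8 (1) is monic with greatest element $\theta(\alpha) = (\deg(g_1)\alpha_1,\dots,\deg(g_n)\alpha_n)\in\supp(g(\alpha))$. Statement (1) is then immediate from Lemma 2.8 (2) applied to $A$, since by (5.7) the ideal generated by $\{g(\alpha)\mid\alpha\in A\}$ is exactly $\mathcal{I}_t(g_1,\dots,g_n)$.

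For the existence half of (2), I would apply Lemma 3.1 to $f$ and the family $(g(\alpha)\mid\alpha\in A)$. This yields $(p(\alpha)\mid\alpha\in A)\in\Omega^{A}$ and $\tau\in\Omega$ with $f = \sum_{\alpha\in A} p(\alpha)\cdot g(\alpha) + \tau$, such that $\theta(\alpha)\nleqslant\gamma$ for every $\alpha\in A$ and every $\gamma\in\supp(\tau)$, and $\supp(\tau)\subseteq\Delta(\supp(f))$. Put $\psi\triangleq\tau$. Then $f-\psi\in\mathcal{I}_t(g_1,\dots,g_n)$ and $\supp(\psi)\subseteq\Delta(\supp(f))$. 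For any $\beta\in\supp(\psi)$ and any $\alpha\in A$, the condition $\theta(\alpha)\nleqslant\beta$ (for integer vectors) supplies an $s\in[1,n]$ with $\theta(\alpha)_s > \beta_s$, i.e., $\beta_s\leqslant\deg(g_s)\alpha_s - 1$, which is precisely the desired property.

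For uniqueness, suppose $\psi_1,\psi_2\in\Omega$ both satisfy the stated conditions. Then $\psi_1 - \psi_2\in\mathcal{I}_t(g_1,\dots,g_n)$. If $\psi_1 - \psi_2 \neq 0$, then by (1) and Definition 2.3 there exist $\gamma\in\supp(\psi_1-\psi_2)$ and $\alpha\in A$ with $\theta(\alpha)\leqslant\gamma$. Since $\supp(\psi_1-\psi_2)\subseteq\supp(\psi_1)\cup\supp(\psi_2)$, the stated property of whichever $\psi_i$ contains $\gamma$ furnishes an index $s\in[1,n]$ with $\gamma_s \leqslant \deg(g_s)\alpha_s - 1$, contradicting $\theta(\alpha)_s = \deg(g_s)\alpha_s \leqslant \gamma_s$ forced by $\theta(\alpha)\leqslant\gamma$. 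Hence $\psi_1 = \psi_2$. Together with existence, this also pins down $\psi$ as the $\tau$ produced by Lemma 3.1, giving the ``moreover'' clause $\supp(\psi)\subseteq\Delta(\supp(f))$ for free.

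No step is a serious obstacle once one notices that the negation ``$\theta(\alpha)\nleqslant\gamma$'' of the leading-exponent condition delivered by Lemma 3.1 is, for integer vectors, verbatim the existence of $s$ with $\gamma_s \leqslant \deg(g_s)\alpha_s - 1$; the only mild point to verify is that the ideal in Notation 5.1 matches the one handled by Lemma 2.8 (2), which is immediate from (5.7).
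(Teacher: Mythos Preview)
Your proof is correct and follows essentially the same route as the paper: part (1) via Lemma 2.7(2), existence in (2) via Lemma 3.1, and uniqueness via the Gr\"{o}bner-basis property---the paper packages the latter two as the direct-sum decomposition $\Omega = \mathcal{I}_t(g_1,\dots,g_n)\oplus\delta(\mathbb{N}^{n}-\nabla(D))$ obtained from Theorem 3.1, which is exactly your argument rephrased. Note only that what you cite as ``Lemma 2.8'' is Lemma 2.7 in the paper's numbering.
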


\begin{proof}
We note that (1) follows from (2) of Lemma 2.7, and so we only prove (2). To this end, let
$$\mbox{$D=\{(\deg(g_1)\alpha_1,\dots,\deg(g_n)\alpha_n)\mid \alpha\in \mathbb{N}^{n},\sum_{i=1}^{n}\alpha_i=t\}$}.$$
From (1) of Lemma 2.7, Lemma 3.1, (1) and Theorem 3.1, we deduce that $\Omega$ is the direct sum of $\mathcal{I}_t(g_1,\dots,g_n)$ and $\delta(\mathbb{N}^{n}-\nabla(D))$, which further implies the existence and uniqueness of $\psi$. Moreover, it follows from Lemma 3.1 that $\supp(\psi)\subseteq\Delta(\supp(f))$, as desired.
\end{proof}

\setlength{\parindent}{2em}
Now we state and prove the main result of this subsection. To this end, for any $k\in[1,n]$, we let $S_k$ be a finite subset of $R$, and let $(S_k,\psi_k)$ be a multiset. Define $(g_1,\dots,g_n)$ as
\begin{equation}\mbox{$\forall~k\in[1,n]:g_k=\prod_{u\in S_k}(x_k-u)^{\psi_k(u)}$}.\end{equation}

\setlength{\parindent}{0em}
\begin{theorem}
{\bf{(1)}}\,\,Suppose that for any $k\in[1,n]$, $S_{k}$ satisfies Condition (D) in $R$. Fix $t\in\mathbb{N}$. Consider an arbitrary $f\in\Omega$. Then, $f\in\mathcal{I}_t(g_1,\dots,g_n)$ if and only if for any $a\in\prod_{i=1}^{n}S_i$ and $\beta\in\mathbb{N}^{n}$ with $\sum_{i=1}^{n}\left\lfloor\frac{\beta_i}{\psi_i(a_i)}\right\rfloor\leqslant t-1$, it holds that $f(x_1+a_1,\dots,x_n+a_n)_{[\beta]}=0$.

{\bf{(2)}}\,\,Suppose that for any $k\in[1,n]$, $S_{k}$ satisfies Condition (D) in $R$. Fix $t\in\mathbb{N}$. Let $f\in\Omega$ such that for any $a\in\prod_{i=1}^{n}S_i$ and $\beta\in\mathbb{N}^{n}$ with $\sum_{i=1}^{n}\left\lfloor\frac{\beta_i}{\psi_i(a_i)}\right\rfloor\leqslant t-1$, it holds that $f(x_1+a_1,\dots,x_n+a_n)_{[\beta]}=0$. Then, there exists polynomials $(p(\alpha)\mid\alpha\in\mathbb{N}^{n},\sum_{i=1}^{n}\alpha_i=t)$ such that
$$\mbox{$f=\sum_{(\alpha\in\mathbb{N}^{n},\sum_{i=1}^{n}\alpha_i=t)}p(\alpha)\cdot(\prod_{k=1}^{n}{g_k}^{\alpha_k})$},$$
and for any $\alpha\in\mathbb{N}^{n}$ with $\sum_{i=1}^{n}\alpha_i=t$, it holds that $$\mbox{$\supp(p(\alpha))+\supp(\prod_{k=1}^{n}{g_k}^{\alpha_k})\subseteq\Delta(\supp(f))$}.$$
Moreover, for any $\beta\in\max(\supp(f))$, there exists $\gamma\in\mathbb{N}^{n}$, $\sum_{i=1}^{n}\gamma_i=t$ such that $(\sum_{u\in S_k}\psi_k(u))\cdot\gamma_k\leqslant\beta_k$ for all $k\in[1,n]$.

{\bf{(3)}}\,\,Suppose that $\prod_{i=1}^{n}S_{i}\neq\emptyset$. Let $t\in\mathbb{Z}^{+}$. Assume in addition that for any $f\in\Omega$ satisfying that
$$\mbox{$\forall~a\in\prod_{i=1}^{n}S_i,\forall~\beta\in\mathbb{N}^{n}~s.t.~\sum_{i=1}^{n}\left\lfloor\frac{\beta_i}{\psi_i(a_i)}\right\rfloor\leqslant t-1:f(x_1+a_1,\dots,x_n+a_n)_{[\beta]}=0$},$$
it holds that $f\in\mathcal{I}_t(g_1,\dots,g_n)$. Then, for any $k\in[1,n]$, $S_{k}$ satisfies Condition (D) in $R$.
\end{theorem}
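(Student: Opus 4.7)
The plan is to recognize Theorem 5.2 as a specialization of Theorem 5.1 to the natural generating family of $\mathcal{I}_t(g_1,\ldots,g_n)$. I would set $\Lambda=\{\alpha\in\mathbb{N}^n:\sum_{i=1}^n\alpha_i=t\}$ and, for $i\in[1,n]$, $u\in S_i$ and $\alpha\in\Lambda$, define $\varepsilon(i,u,\alpha)=\psi_i(u)\alpha_i$. Under this choice the polynomial $g(\alpha)$ of (5.1) is $\prod_{k=1}^n g_k^{\alpha_k}$, so $\{g(\lambda):\lambda\in\Lambda\}$ is precisely the generating set of $\mathcal{I}_t(g_1,\ldots,g_n)$. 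A direct computation from (5.2)--(5.4) then yields $\theta(\alpha)=(\deg(g_1)\alpha_1,\ldots,\deg(g_n)\alpha_n)$, $B_a=\{(\psi_1(a_1)\alpha_1,\ldots,\psi_n(a_n)\alpha_n):\alpha\in\Lambda\}$ and $D=\{(\deg(g_1)\alpha_1,\ldots,\deg(g_n)\alpha_n):\alpha\in\Lambda\}$.

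Next I would apply Lemma 2.5 twice. Since $\psi_i(a_i)\geqslant 1$ for every $a\in\prod_i S_i$ and $\deg(g_k)=\sum_{u\in S_k}\psi_k(u)\geqslant 1$ when $S_k\neq\emptyset$, formula (2.12) gives $|\mathbb{N}^n-\nabla(B_a)|=(\prod_i\psi_i(a_i))\binom{n+t-1}{n}$ and $\zeta_2=|\mathbb{N}^n-\nabla(D)|=(\prod_k\deg(g_k))\binom{n+t-1}{n}$. Summing the former over $a\in\prod_i S_i$ and expanding the product yields $\zeta_1=\binom{n+t-1}{n}\prod_k\deg(g_k)=\zeta_2$, so the equality $\zeta_1=\zeta_2$ needed to invoke Theorem 5.1 is automatic in this setup. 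Formula (2.13) then identifies $\mathbb{N}^n-\nabla(B_a)=\{\beta\in\mathbb{N}^n:\sum_{i=1}^n\lfloor\beta_i/\psi_i(a_i)\rfloor\leqslant t-1\}$, so the coefficient condition appearing in part (1) is precisely the membership condition $f\in Q$ given by (5.5).

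With these identifications, the unconditional inclusion $\mathcal{I}_t(g_1,\ldots,g_n)\subseteq Q$ comes from the observation that $g_k(x_k+a_k)=\prod_{u\in S_k}(x_k+a_k-u)^{\psi_k(u)}$ is divisible by $x_k^{\psi_k(a_k)}$, so $\prod_k g_k(x+a)^{\alpha_k}$ is divisible by $\prod_k x_k^{\psi_k(a_k)\alpha_k}$ and its support sits in $\nabla(B_a)$. The reverse inclusion, together with the support property of the $p(\alpha)$'s and the maximality assertion, is then exactly what Theorem 5.1 (1) delivers in this setup, once one notes that $\sum_{u\in S_k}\varepsilon(k,u,\alpha)=\deg(g_k)\alpha_k$. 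This proves parts (1) and (2) simultaneously.

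For part (3), the hypothesis says $Q\subseteq\mathcal{I}_t(g_1,\ldots,g_n)$; combined with $\mathcal{I}_t(g_1,\ldots,g_n)\subseteq Q$ this forces $Q=\mathcal{I}_t(g_1,\ldots,g_n)=\langle\{g(\lambda):\lambda\in\Lambda\}\rangle$, and Proposition 5.1 (1) then shows that $(g(\lambda):\lambda\in\Lambda)$ is a Gr\"{o}bner basis of $Q$. Since $t\geqslant 1$ and $\psi_i(a_i)\geqslant 1$ force $\mathbf{0}\notin B_a$ for every $a\in\prod_i S_i$, and since $\prod_i S_i\neq\emptyset$ by assumption, Theorem 5.1 (2) applies and yields Condition (D) on every $S_k$. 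The single step that will require the most care is invoking Lemma 2.5 (2.13) to equate $\mathbb{N}^n-\nabla(B_a)$ with the $\lfloor\cdot\rfloor$-condition in the statement of Theorem 5.2, since the entire translation between the intrinsic formulation of the theorem and the abstract setup of Section 5.1 rides on that identity.
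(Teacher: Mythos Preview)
Your proposal is correct and follows essentially the same route as the paper: specialize Theorem 5.1 with $\varepsilon(i,u,\alpha)=\psi_i(u)\alpha_i$, use Lemma 2.5 both to verify $\zeta_1=\zeta_2$ and to translate the $\nabla(B_a)$ condition into the floor inequality, and for part~(3) combine the hypothesis $Q\subseteq\mathcal{I}_t$ with the standing inclusion $\mathcal{I}_t\subseteq Q$, then invoke Proposition~5.1(1) and Theorem~5.1(2). The paper phrases the passage from ``Gr\"obner basis of $Q$'' to ``$Q=\mathcal{I}_t$'' via Theorem~3.1, whereas you read it off directly from the decomposition statement in Theorem~5.1(1); both are fine.
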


\begin{proof}
Throughout the proof, we fix $t\in\mathbb{N}$, and let
$$\mbox{$\Lambda=\{\theta\in\mathbb{N}^{n}\mid\sum_{i=1}^{n}\theta_i=t\}$}.$$
Consider the tuple $(\psi_i(u)\theta_i\mid i\in[1,n],u\in S_i,\theta\in\Lambda)$. Then, we have
\begin{equation}\mbox{$\forall~\theta\in\Lambda:\prod_{i=1}^{n}\prod_{u\in S_i}(x_i-u)^{\psi_i(u)\theta_i}=\prod_{k=1}^{n}{g_k}^{\theta_k}$}.\end{equation}
Now we define $D$, $(B_a\mid a\in\prod_{i=1}^{n}S_i)$ and $Q$ as follows:
$$D=\mbox{$\{((\sum_{u\in S_1}\psi_1(u))\theta_1,\dots,(\sum_{u\in S_n}\psi_n(u))\theta_n)\mid \theta\in\Lambda\}$},$$
$$\mbox{$\forall~a\in\prod_{i=1}^{n}S_i:B_a=\{(\psi_1(a_1)\theta_1,\dots,\psi_n(a_n)\theta_n)\mid \theta\in\Lambda\}$},$$
$$Q\triangleq\{f\in\Omega\mid\mbox{$\supp(f(x_1+a_1,\dots,x_n+a_n))\subseteq\nabla(B_a)$ for all $a\in\prod_{i=1}^{n}S_i$}\}.$$
By Lemma 2.5, $\mathbb{N}^{n}-\nabla(D)$ is finite. From Section 5.1 and (5.9), we deduce that $Q$ is an ideal of $\Omega$ with $\mathcal{I}_t(g_1,\dots,g_n)\subseteq Q$. Moreover, for an arbitrary $f\in\Omega$, by Lemma 2.5, one can check that $f\in Q$ if and only if for any $a\in\prod_{i=1}^{n}S_i$ and $\beta\in\mathbb{N}^{n}$ with $\sum_{i=1}^{n}\left\lfloor\frac{\beta_i}{\psi_i(a_i)}\right\rfloor\leqslant t-1$, it holds that
$$f(x_1+a_1,\dots,x_n+a_n)_{[\beta]}=0.$$

{\bf{(1)}} and {\bf{(2)}}\,\,By Lemma 2.5, we have
\begin{eqnarray*}
\begin{split}
\mbox{$\sum_{a\in\prod_{i=1}^{n}S_i}|\mathbb{N}^{n}-\nabla(B_a)|$}&=\mbox{$\sum_{a\in\prod_{i=1}^{n}S_i}\left(\prod_{i=1}^{n}\psi_i(a_i)\right)\cdot\binom{n+t-1}{n}$}\\
&=\mbox{$(\prod_{i=1}^{n}(\sum_{u\in S_i}\psi_i(u)))\cdot\binom{n+t-1}{n}$}\\
&=|\mathbb{N}^{n}-\nabla(D)|.
\end{split}
\end{eqnarray*}
Hence by (1) of Theorem 5.1 and (5.9), $(\prod_{k=1}^{n}{g_k}^{\theta_k}\mid\theta\in\Lambda)$ is a Gr\"{o}bner basis of $Q$. It then follows from Theorem 3.1 that $Q=\mathcal{I}_t(g_1,\dots,g_n)$, which further establishes (1). Moreover, (2) again follows from (1) of Theorem 5.1.

{\bf{(3)}}\,\,Assume that $t\in\mathbb{Z}^{+}$ and $\prod_{i=1}^{n}S_{i}\neq\emptyset$. Then, one can check that $\mathbf{0}\not\in B_a$ for all $a\in\prod_{i=1}^{n}S_{i}$. Moreover, we note that $Q=\mathcal{I}_t(g_1,\dots,g_n)$. It then follows from (1) of Proposition 5.1 that $(\prod_{k=1}^{n}{g_k}^{\theta_k}\mid\theta\in\Lambda)$ is a Gr\"{o}bner basis of $Q$. Hence an application of (2) of Theorem 5.1 completes the proof.
\end{proof}

\setlength{\parindent}{2em}
We note that by Theorem 5.2, Condition (D) is a necessary and sufficient condition for our Nullstellensatz to hold true under the relatively mild assumptions that $t\geqslant1$ and $\prod_{i=1}^{n}S_{i}\neq\emptyset$. In the following remark, we show how Theorem 5.2 recovers some known results in the literature.

\setlength{\parindent}{0em}
\begin{remark}
First, if $R$ is a field and $\psi_i(u)=1$ for all $i\in[1,n]$ and $u\in S_i$, then (2) of Theorem 5.2 recovers the Nullstellensatz with multiplicity [9, Theorem 3.1], and enhances [9, Corollary 3.2]; if $t=1$, then (2) of Theorem 5.2 enhances [23, Theorem 4], which further recovers [24, Theorem 6] when $R$ is a field; moreover, if $R$ is a field and $t=1$, then (2) of Theorem 5.2, together with (2) of Proposition 5.1, recovers the Nullstellensatz for multisets [24, Theorem 1].

\hspace*{4mm}\,\,Second, if $t=1$ and $\psi_i(u)=1$ for all $i\in[1,n]$ and $u\in S_i$, then Theorem 5.2 recovers the Chevalley-Alon-Tarsi-Schauz Lemma (see, [34, Equivalence and Definition 2.8], [12, Theorem 2.5], [17, Theorem 3.3], [18, Theorem 2.3]) and [17, Theorem 3.8]. Both of these two results have shown that Alon's combinatorial Nullstellensatz Theorem 1.1 remains valid with the field $\mathbb{F}$ replaced by $R$ if and only if all the $S_{i}$'s satisfy Condition (D) in $R$.
\end{remark}

\subsection{Punctured Nullstellensatz}
\setlength{\parindent}{2em}
In this subsection, we give a common generalization of the punctured Nullstellensatz established in Ball and Serra \cite{9}, and in K\'{o}s, R\'{o}nyai and M\'{e}sz\'{a}ros \cite{24}. We begin with the following proposition, which is inspired by [9, Theorem 4.1] and [24, Theorem 7].

\setlength{\parindent}{0em}
\begin{proposition}
Let $(g_1,\dots,g_n)\in\Omega^{n}$, $(h_1,\dots,h_n)\in\Omega^{n}$ satisfy that $g_k,h_k\in R[x_{k}]$, $g_k,h_k$ are monic, $h_k\mid g_k$ for all $k\in[1,n]$. Let $t\in\mathbb{N}$, and fix $f\in\bigcap_{m=1}^{n}\mathcal{I}_{t}(g_1,\dots,g_{m-1},g_m/h_m,g_{m+1},\dots,g_n)$. Moreover, let $\psi\in \Omega$ such that $f-\psi\in \mathcal{I}_{t}(g_1,\dots,g_n)$, and for any $\beta\in\supp(\psi)$ and $\alpha\in \mathbb{N}^{n}$ with $\sum_{i=1}^{n}\alpha_{i}=t$, there exists $s\in[1,n]$ with $\beta_{s}\leqslant\deg(g_s)\alpha_{s}-1$. Then, the following three statements hold:

{\bf{(1)}}\,\,$\psi\in\bigcap_{m=1}^{n}\mathcal{I}_{t}(g_1,\dots,g_{m-1},g_m/h_m,g_{m+1},\dots,g_n)$;

{\bf{(2)}}\,\,$(\prod_{k=1}^{n}\frac{g_k}{h_k})\mid\psi$;

{\bf{(3)}}\,\,Suppose that $t\in\mathbb{Z}^{+}$. Let $\varphi\in\Omega$ with $\psi=\varphi\cdot(\prod_{k=1}^{n}\frac{g_k}{h_k})$, and let $(u_{1},\dots,u_{n})\in R^{n}$ such that $g_k(u_{1},\dots,u_{n})=0$ for all $k\in[1,n]$. Moreover, fix $m\in[1,n]$, and let $a=\prod_{k\in[1,n]-\{m\}}(g_k/h_k)(u_{1},\dots,u_{n})$. Then, we have
\begin{equation}(g_{m}/h_{m})^{t-1}\mid a\cdot\varphi(u_{1},\dots,u_{m-1},x_{m},u_{m+1},\dots,u_{n}).\end{equation}
Assume in addition that $f(u_{1},\dots,u_{n})\neq0$, and $(g_k/h_k)(u_{1},\dots,u_{n})$ is not a zero divisor of $R$ for any $k\in[1,n]$. Then, it holds that
\begin{equation}\mbox{$\deg(f)\geqslant\deg(\psi)\geqslant(t-1)\deg(g_m/h_m)+\left(\sum_{k=1}^{n}\deg(g_k/h_k)\right)$}.\end{equation}
\end{proposition}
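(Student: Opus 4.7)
The three parts will be proved in order; part (1) is a simple ideal inclusion, part (2) is the main technical step and requires a support-based vanishing argument, and part (3) becomes an evaluation-and-cancellation calculation once (2) is in hand.

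For (1), since $g_k=h_k\cdot(g_k/h_k)$, every generator $\prod_k g_k^{\alpha_k}$ of $\mathcal{I}_t(g_1,\ldots,g_n)$ is divisible by the corresponding generator $\prod_{k\neq m}g_k^{\alpha_k}(g_m/h_m)^{\alpha_m}$ of $\mathcal{I}_t(g_1,\ldots,g_m/h_m,\ldots,g_n)$, giving $\mathcal{I}_t(g_1,\ldots,g_n)\subseteq\mathcal{I}_t(g_1,\ldots,g_m/h_m,\ldots,g_n)$ for every $m$. Then $f\in\mathcal{I}_t(g_1,\ldots,g_m/h_m,\ldots,g_n)$ and $f-\psi\in\mathcal{I}_t(g_1,\ldots,g_n)$ immediately force $\psi\in\mathcal{I}_t(g_1,\ldots,g_m/h_m,\ldots,g_n)$.

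For (2), by Lemma 2.4 applied to the monic polynomials $g_k/h_k\in R[x_k]$, it suffices to show $(g_m/h_m)\mid\psi$ for each $m$. Fix $m$ and set $\tilde g_m=g_m/h_m$, $\tilde g_k=g_k$ for $k\neq m$. By Proposition 5.1 (1), $(\prod_k\tilde g_k^{\alpha_k}\mid\sum\alpha_i=t)$ is a Gr\"obner basis of $\mathcal{I}_t(g_1,\ldots,g_m/h_m,\ldots,g_n)$; since $\psi$ lies in this ideal by (1), the implication (1)$\Rightarrow$(3) of Theorem 3.1 produces a decomposition $\psi=\sum_\alpha p_\alpha\prod_k\tilde g_k^{\alpha_k}$ in which the support of every summand lies inside $\Delta(\supp(\psi))$, hence inside $\mathbb{N}^n-\nabla(D)$ with $D=\{(\deg(g_1)\alpha'_1,\ldots,\deg(g_n)\alpha'_n):\sum\alpha'_i=t\}$, since the latter set is downward closed. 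The main obstacle is to rule out the $\alpha_m=0$ summands: if some $p_\alpha\neq 0$ for such an $\alpha$, pick $\gamma\in\max(\supp(p_\alpha))$ and let $\theta=(\alpha_1\deg g_1,\ldots,0,\ldots,\alpha_n\deg g_n)$ be the greatest element of $\supp(\prod_{k\neq m}g_k^{\alpha_k})$; Lemma 2.1 (2) places $\gamma+\theta$ in $\max(\supp(p_\alpha\prod_{k\neq m}g_k^{\alpha_k}))\subseteq\mathbb{N}^n-\nabla(D)$, yet testing membership against $\alpha'=\alpha$ demands some $s$ with $(\gamma+\theta)_s<\deg(g_s)\alpha_s$, which reduces to $\gamma_m<0$ when $s=m$ (as $\alpha_m=0$ forces $\theta_m=0$) and to $\gamma_s<0$ when $s\neq m$ (as $(\gamma+\theta)_s=\gamma_s+\alpha_s\deg g_s$), both impossible. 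Hence only the $\alpha_m\geqslant 1$ summands survive, each divisible by $\tilde g_m=g_m/h_m$.

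For (3), I reuse the decomposition $\psi=\sum_\alpha p_\alpha\prod_{k\neq m}g_k^{\alpha_k}(g_m/h_m)^{\alpha_m}$ supplied by (1) and specialise $x_k\mapsto u_k$ for $k\neq m$: every summand with $\alpha_k\geqslant 1$ for some $k\neq m$ vanishes because $g_k(u_k)=0$, so only the $\alpha=te_m$ term survives and $\psi(u_1,\ldots,u_{m-1},x_m,u_{m+1},\ldots,u_n)=\tilde p(x_m)(g_m/h_m)^t$ in $R[x_m]$ for some $\tilde p$. Using (2) to write $\psi=\varphi\prod_k(g_k/h_k)$ and specialising identically yields $a\,\varphi(u_1,\ldots,u_{m-1},x_m,u_{m+1},\ldots,u_n)\,(g_m/h_m)=\tilde p(x_m)(g_m/h_m)^t$, and cancelling the monic, non-zero-divisor factor $g_m/h_m$ once in $R[x_m]$ establishes (5.10). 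For (5.11), I combine three bounds: $\deg(f)\geqslant\deg(\psi)$ from $\supp(\psi)\subseteq\Delta(\supp(f))$ in Proposition 5.1 (2); $\deg(\psi)=\deg(\varphi)+\sum_k\deg(g_k/h_k)$ from Lemma 2.1 (4) applied to $\psi=\varphi\cdot\prod_k(g_k/h_k)$; and $\deg(\varphi)\geqslant(t-1)\deg(g_m/h_m)$, the last following because $t\geqslant 1$ forces every generator of $\mathcal{I}_t(g_1,\ldots,g_n)$ to vanish at $u$, giving $f(u)=\psi(u)=\varphi(u)\prod_k(g_k/h_k)(u_k)$, whence the non-zero-divisor hypothesis on each $(g_k/h_k)(u_k)$ together with $f(u)\neq 0$ forces $a\varphi(u)\neq 0$, so $a\varphi(u_1,\ldots,u_{m-1},x_m,u_{m+1},\ldots,u_n)\in R[x_m]$ is a nonzero polynomial of $x_m$-degree at least $(t-1)\deg(g_m/h_m)$ by (5.10), bounded above by $\deg_{x_m}(\varphi)\leqslant\deg(\varphi)$.
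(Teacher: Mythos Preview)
Your proof is correct and follows essentially the same route as the paper's: part (1) via the ideal inclusion $\mathcal{I}_t(g_1,\dots,g_n)\subseteq\mathcal{I}_t(g_1,\dots,g_m/h_m,\dots,g_n)$; part (2) via the Gr\"obner decomposition of $\psi$ from Proposition~5.1(1) and Theorem~3.1, combined with the support constraint on $\psi$ to force $\alpha_m\geqslant 1$ in every surviving summand (the paper argues this directly rather than by contradiction, and your invocation of Lemma~2.1(2) and $\max$ is harmless but unnecessary---any $\gamma\in\supp(p_\alpha)$ already gives $\gamma+\theta\in\Delta(\supp(\psi))\subseteq\mathbb{N}^n-\nabla(D)$); and part (3) by specialising the same decomposition at $u$ and cancelling one monic factor of $g_m/h_m$.
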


\begin{proof}
{\bf{(1)}}\,\,The desired result follows from the fact that $\mathcal{I}_t(g_1,\dots,g_n)\subseteq\bigcap_{m=1}^{n}\mathcal{I}_{t}(g_1,\dots,g_{m-1},g_m/h_m,g_{m+1},\dots,g_n)$.

{\bf{(2)}}\,\,Let $m\in[1,n]$, and let $(\zeta_1,\dots,\zeta_n)=(g_1,\dots,g_{m-1},g_m/h_m,g_{m+1},\dots,g_n)$. By (1), (1) of Proposition 5.1 and Theorem 3.1, we can choose polynomials $(p(\alpha)\mid\alpha\in \mathbb{N}^{n},\sum_{i=1}^{n}\alpha_i=t)$ satisfying the following two conditions:
\begin{equation}\mbox{$\psi=\sum_{(\alpha\in\mathbb{N}^{n},\sum_{i=1}^{n}\alpha_i=t)}p(\alpha)\cdot(\prod_{k=1}^{n}{\zeta_{k}}^{\alpha_{k}})$},\end{equation}
\begin{equation}\hspace*{-14mm}\mbox{$\forall~\alpha\in \mathbb{N}^{n}~s.t.~\sum_{i=1}^{n}\alpha_i=t:\supp(p(\alpha))+\supp(\prod_{k=1}^{n}{\zeta_{k}}^{\alpha_{k}})\subseteq\Delta(\supp(\psi))$}.\end{equation}
Consider an arbitrary $\alpha\in \mathbb{N}^{n}$ such that $\sum_{i=1}^{n}\alpha_i=t$, $p(\alpha)\neq0$. By (5.13), we can choose $\beta\in\supp(\psi)$ with $(\deg(\zeta_1)\alpha_1,\dots,\deg(\zeta_n)\alpha_n)\leqslant\beta$. Now we further choose $s\in[1,n]$ with $\beta_{s}\leqslant\deg(g_s)\alpha_{s}-1$. It then follows that $\zeta_s\neq g_s$, and hence $s=m$, which further implies that $\alpha_m\geqslant1$. It then follows from (5.12) that $(g_m/h_m)\mid\psi$. Now (2) immediately follows from the arbitrariness of $m$ and Lemma 2.4.

{\bf{(3)}}\,\,Let $(\zeta_1,\dots,\zeta_n)=(g_1,\dots,g_{m-1},g_m/h_m,g_{m+1},\dots,g_n)$. By (1), we can choose polynomials $(p(\alpha)\mid\alpha\in \mathbb{N}^{n},\sum_{i=1}^{n}\alpha_i=t)$ satisfying (5.12). Let $\beta\in\mathbb{N}^{n}$ such that $\beta_m=t$ and $\beta_i=0$ for all $i\in[1,n]-\{m\}$. Consider an arbitrary $\alpha\in \mathbb{N}^{n}$ with $\sum_{i=1}^{n}\alpha_i=t$, $\alpha\neq\beta$. Then, there exists $s\in[1,n]-\{m\}$ with $\alpha_s\geqslant1$. It follows that $g_s\mid (\prod_{k=1}^{n}{\zeta_{k}}^{\alpha_{k}})$. Since $g_s\in R[x_s]$, $s\neq m$, we have $g_s(u_1,\dots,u_{m-1},x_m,u_{m+1},\dots,u_{n})=g_s(u_1,\dots,u_n)=0$, which further implies that $(\prod_{k=1}^{n}{\zeta_{k}}^{\alpha_{k}})(u_1,\dots,u_{m-1},x_m,u_{m+1},\dots,u_{n})=0$. From the above discussion, we deduce that
$$\hspace*{-6mm}\psi(u_1,\dots,u_{m-1},x_m,u_{m+1},\dots,u_{n})=(p(\beta))(u_1,\dots,u_{m-1},x_m,u_{m+1},\dots,u_{n})\cdot(g_m/h_m)^{t}.$$
On the other hand, it follows from $\psi=\varphi\cdot(\prod_{k=1}^{n}\frac{g_k}{h_k})$ that
$$\hspace*{-6mm}\mbox{$\psi(u_1,\dots,u_{m-1},x_m,u_{m+1},\dots,u_{n})=a\cdot\varphi(u_1,\dots,u_{m-1},x_m,u_{m+1},\dots,u_{n})\cdot (g_m/h_m)$}.$$
Combining the above two equations, (5.10) immediately follows from (4) of Lemma 2.1 and the fact that $g_m/h_m$ is monic.

\hspace*{4mm}\,\,Now we prove (5.11). Noting that $(f-\psi)(u_{1},\dots,u_{n})=0$, we have $\psi(u_{1},\dots,u_{n})=f(u_{1},\dots,u_{n})\neq0$. By $\varphi\mid\psi$, we have $\varphi(u_{1},\dots,u_{n})\neq0$, which, together with the fact that $a$ is not a zero divisor of $R$, further implies that $a\cdot\varphi(u_{1},\dots,u_{m-1},x_{m},u_{m+1},\dots,u_{n})\neq0$. Noticing that $(g_m/h_m)^{t-1}$ is a monic polynomial with $\deg((g_m/h_m)^{t-1})=(t-1)\deg(g_m/h_m)$, by (5.10) and (4) of Lemma 2.1, we have
$$(t-1)\deg(g_m/h_m)\leqslant\deg(a\cdot\varphi(u_{1},\dots,u_{m-1},x_{m},u_{m+1},\dots,u_{n}))\leqslant\deg(\varphi).$$
Since $g_k/h_k$ is monic for all $k\in[1,n]$, by (4) of Lemma 2.1, we have
$$\hspace*{-6mm}\mbox{$\deg(\psi)=\deg(\varphi)+(\sum_{k=1}^{n}\deg(g_k/h_k))\geqslant(t-1)\deg(g_m/h_m)+(\sum_{k=1}^{n}\deg(g_k/h_k))$}.$$
Finally, by (2) of Proposition 5.1, we have $\deg(\psi)\leqslant\deg(f)$, which further establishes (5.11), as desired.
\end{proof}

\setlength{\parindent}{2em}
Now we state and prove our punctured Nullstellensatz. Throughout the rest of this subsection, for any $k\in[1,n]$, we let $S_k$ be a finite subset of $R$, $(S_k,\psi_k)$ be a multiset, and fix $E_k\subseteq S_k$. Define $(g_1,\dots,g_n)$ and $(h_1,\dots,h_n)$ as follows:
\begin{equation}\mbox{$\forall~k\in[1,n]:g_k=\prod_{u\in S_k}(x_k-u)^{\psi_k(u)},~h_k=\prod_{u\in E_k}(x_k-u)^{\psi_k(u)}$}.\end{equation}

\setlength{\parindent}{0em}
\begin{theorem}
Suppose that for any $k\in[1,n]$, $S_{k}$ satisfies Condition (D) in $R$. Let $t\in\mathbb{N}$, $f\in\Omega$. Then, it holds that:

{\bf{(1)}}\,\,$f\in\bigcap_{m=1}^{n}\mathcal{I}_t(g_1,\dots,g_{m-1},g_m/h_m,g_{m+1},\dots,g_n)$ if and only if for any $a\in(\prod_{i=1}^{n}S_i)-(\prod_{i=1}^{n}E_i)$ and $\beta\in\mathbb{N}^{n}$ with $\sum_{i=1}^{n}\left\lfloor\frac{\beta_i}{\psi_i(a_i)}\right\rfloor\leqslant t-1$, it holds that $f(x_1+a_1,\dots,x_n+a_n)_{[\beta]}=0$;

{\bf{(2)}}\,\,Suppose that for any $a\in(\prod_{i=1}^{n}S_i)-(\prod_{i=1}^{n}E_i)$ and $\beta\in\mathbb{N}^{n}$ with $\sum_{i=1}^{n}\left\lfloor\frac{\beta_i}{\psi_i(a_i)}\right\rfloor\leqslant t-1$, it holds that
$$f(x_1+a_1,\dots,x_n+a_n)_{[\beta]}=0.$$
Then, there uniquely exists $\eta\in \Omega$ such that $f-\eta\in \mathcal{I}_{t}(g_1,\dots,g_n)$, and for any $\beta\in\supp(\eta)$ and $\alpha\in \mathbb{N}^{n}$ with $\sum_{i=1}^{n}\alpha_{i}=t$, there exists $k\in[1,n]$ with $\beta_{k}\leqslant(\sum_{u\in S_k}\psi_k(u))\alpha_{k}-1$. Moreover, we have $\supp(\eta)\subseteq\Delta(\supp(f))$, $(\prod_{k=1}^{n}\frac{g_k}{h_k})\mid\eta$. Assume in addition that $t\geqslant1$ and $f(w_{1},\dots,w_{n})\neq0$ for some $w\in\prod_{i=1}^{n}S_i$. Then, for any $m\in[1,n]$, it holds that
$$\mbox{$\deg(f)\geqslant\deg(\eta)\geqslant(t-1)(\sum_{u\in S_m-E_m}\psi_m(u))+(\sum_{k=1}^{n}\sum_{u\in S_k-E_k}\psi_k(u))$}.$$
\end{theorem}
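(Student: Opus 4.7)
The proof of (1) will follow from Theorem 5.2 applied to each of the $n$ tuples of polynomials in the intersection. Fix $m\in[1,n]$. The tuple $(g_1,\dots,g_{m-1},g_m/h_m,g_{m+1},\dots,g_n)$ arises from the multisets $(S_k,\psi_k)$ for $k\neq m$ together with $(S_m-E_m,\psi_m\mid_{S_m-E_m})$. Since $S_k$ satisfies Condition (D) for each $k$, and each $S_k-E_k\subseteq S_k$ also satisfies Condition (D), Theorem 5.2 (1) gives that $f\in\mathcal{I}_t(g_1,\dots,g_m/h_m,\dots,g_n)$ if and only if $f(x_1+a_1,\dots,x_n+a_n)_{[\beta]}=0$ for all $a\in S_1\times\cdots\times(S_m-E_m)\times\cdots\times S_n$ and $\beta\in\mathbb{N}^n$ with $\sum_i\lfloor\beta_i/\psi_i(a_i)\rfloor\leqslant t-1$. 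Taking the intersection over $m\in[1,n]$, and noting that $\bigcup_{m=1}^{n}\bigl(S_1\times\cdots\times(S_m-E_m)\times\cdots\times S_n\bigr)=(\prod_{i=1}^{n}S_i)-(\prod_{i=1}^{n}E_i)$, establishes (1).

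For the existence, uniqueness and support claims in (2), I would invoke Proposition 5.1 (2) applied to $f$ and $(g_1,\dots,g_n)$, producing a unique $\eta\in\Omega$ with $f-\eta\in\mathcal{I}_t(g_1,\dots,g_n)$ and the required support condition (using $\deg(g_k)=\sum_{u\in S_k}\psi_k(u)$), together with $\supp(\eta)\subseteq\Delta(\supp(f))$. By (1), the hypothesis of (2) implies $f\in\bigcap_{m=1}^{n}\mathcal{I}_t(g_1,\dots,g_m/h_m,\dots,g_n)$, so Proposition 5.2 with $\psi=\eta$ applies, yielding $\eta$ lies in the same intersection and $(\prod_{k=1}^{n}g_k/h_k)\mid\eta$.

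The degree bound under the extra hypothesis $t\geqslant1$ and $f(w_1,\dots,w_n)\neq0$ for some $w\in\prod_{i=1}^{n}S_i$ will come from Proposition 5.2 (3). The key preliminary observation is that taking $\beta=\mathbf{0}$ in the hypothesis of (2) and using $t\geqslant1$ forces $f(a_1,\dots,a_n)=0$ for every $a\in(\prod_{i=1}^{n}S_i)-(\prod_{i=1}^{n}E_i)$. Hence $f(w_1,\dots,w_n)\neq0$ compels $w\in\prod_{i=1}^{n}E_i$, so in particular $w_k\notin S_k-E_k$ for every $k$. By Condition (D) on $S_k$, every factor $w_k-v$ with $v\in S_k-E_k$ is not a zero divisor, and since non-zero-divisors are closed under multiplication, $(g_k/h_k)(w_1,\dots,w_n)=\prod_{v\in S_k-E_k}(w_k-v)^{\psi_k(v)}$ is not a zero divisor for any $k$. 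Meanwhile $g_k(w_1,\dots,w_n)=0$ because $w_k\in S_k$. Thus Proposition 5.2 (3) applies with $u=w$ and any chosen $m$, writing $\eta=\varphi\cdot\prod_{k=1}^{n}(g_k/h_k)$, and gives
\[
\deg(f)\geqslant\deg(\eta)\geqslant(t-1)\deg(g_m/h_m)+\sum_{k=1}^{n}\deg(g_k/h_k).
\]
Substituting $\deg(g_k/h_k)=\sum_{u\in S_k-E_k}\psi_k(u)$ produces the claimed inequality.

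I expect the only subtle step to be the non-zero-divisor verification for the values $(g_k/h_k)(w)$, since this is exactly where Condition (D) interacts with the hypothesis $f(w)\neq0$; the rest is a more or less mechanical assembly of Proposition 5.1, Proposition 5.2, and Theorem 5.2. The set-theoretic manipulation in (1) is routine, as is the bookkeeping translating between $\deg(g_k/h_k)$ and multiset cardinalities $\sum_{u\in S_k-E_k}\psi_k(u)$.
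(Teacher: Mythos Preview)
Your proposal is correct and follows essentially the same approach as the paper's proof: part (1) is obtained by applying Theorem 5.2(1) to each modified tuple $(g_1,\dots,g_m/h_m,\dots,g_n)$ via the multisets $(S_1,\psi_1),\dots,(S_m-E_m,\psi_m),\dots,(S_n,\psi_n)$ and then using the set identity for $(\prod S_i)-(\prod E_i)$; part (2) assembles Proposition 5.1(2), Proposition 5.2, and the observation that $w\in\prod_i E_i$ forces each $(g_k/h_k)(w)$ to be a non-zero-divisor by Condition (D). Your treatment of the non-zero-divisor step is in fact slightly more explicit than the paper's, which simply says ``one can check.''
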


\begin{proof}
{\bf{(1)}}\,\,First, let $m\in[1,n]$. Define $(P_1,\dots,P_n)$ as $P_m=S_m-E_m$, and $P_k=S_k$ for all $k\in[1,n]-\{m\}$. Then, for any $k\in[1,n]$, $P_{k}$ satisfies Condition (D) in $R$. An application of (1) of Theorem 5.2 to $(P_1,\psi_1),\dots,(P_n,\psi_n)$ yields the following fact: $f\in\mathcal{I}_t(g_1,\dots,g_{m-1},g_m/h_m,g_{m+1},\dots,g_n)$ if and only if for any $a\in \prod_{i=1}^{n}P_i$ and $\beta\in\mathbb{N}^{n}$ with $\sum_{i=1}^{n}\left\lfloor\frac{\beta_i}{\psi_i(a_i)}\right\rfloor\leqslant t-1$, it holds that $f(x_1+a_1,\dots,x_n+a_n)_{[\beta]}=0$. Now, with the above discussion, the desired result follows from the fact that
$$\hspace*{-4mm}\mbox{$(\prod_{i=1}^{n}S_i)-(\prod_{i=1}^{n}E_i)=\bigcup_{m=1}^{n}S_1\times\dots\times S_{m-1}\times (S_m-E_m)\times S_{m+1}\times\dots \times S_n$}.$$

{\bf{(2)}}\,\,By (1), $f\in\bigcap_{m=1}^{n}\mathcal{I}_t(g_1,\dots,g_{m-1},g_m/h_m,g_{m+1},\dots,g_n)$. Hence all the desired results except the last assertion follow from Propositions 5.1 and 5.2. Therefore in what follows, we assume that $t\geqslant1$, and fix $w\in\prod_{i=1}^{n}S_i$ with $f(w_{1},\dots,w_{n})\neq0$. We note that $w\in\prod_{i=1}^{n}E_i$. Indeed, if $w\in(\prod_{i=1}^{n}S_i)-(\prod_{i=1}^{n}E_i)$, then by $\sum_{i=1}^{n}\left\lfloor\frac{\mathbf{0}_i}{\psi_i(w_i)}\right\rfloor=0\leqslant t-1$, we have $f(w_1,\dots,w_n)=f(x_1+w_1,\dots,x_n+w_n)_{[\mathbf{0}]}=0$, a contradiction. For any $k\in[1,n]$, from $w_k\in E_k$ and $S_k$ satisfies Condition (D) in $R$, one can check that $g_k(w_1,\dots,w_n)=0$, and $(g_k/h_k)(w_1,\dots,w_n)$ is not a zero divisor of $R$. With these facts, the desired result follows from (3) of Proposition 5.2.
\end{proof}

\setlength{\parindent}{2em}
The following corollary immediately follows from (1) of Theorem 5.3 and (1) of Theorem 5.2.

\setlength{\parindent}{0em}
\begin{corollary}
Suppose that for any $k\in[1,n]$, $S_{k}$ satisfies Condition (D) in $R$. Then, for any $t\in\mathbb{N}$, it holds that
$$\hspace*{-2mm}\mbox{$(\bigcap_{m=1}^{n}\mathcal{I}_t(g_1,\dots,g_{m-1},g_m/h_m,g_{m+1},\dots,g_n))\cap\mathcal{I}_t(h_1,\dots,h_n)=\mathcal{I}_t(g_1,\dots,g_n)$}.$$
\end{corollary}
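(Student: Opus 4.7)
The plan is to verify both inclusions in the claimed equality. The inclusion $\mathcal{I}_t(g_1,\dots,g_n)\subseteq(\bigcap_{m=1}^{n}\mathcal{I}_t(g_1,\dots,g_{m-1},g_m/h_m,g_{m+1},\dots,g_n))\cap\mathcal{I}_t(h_1,\dots,h_n)$ is immediate from the factorization $g_k=(g_k/h_k)\cdot h_k$: every generator $\prod_{k=1}^{n}g_k^{\alpha_k}$ with $\sum_{i=1}^{n}\alpha_i=t$ is a polynomial multiple of $(g_m/h_m)^{\alpha_m}\prod_{k\neq m}g_k^{\alpha_k}$ for each $m$ (the missing factor being $h_m^{\alpha_m}$) and of $\prod_{k=1}^{n}h_k^{\alpha_k}$ (the missing factor being $\prod_{k=1}^{n}(g_k/h_k)^{\alpha_k}$), hence lies in every ideal appearing on the right.

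For the opposite inclusion, I would take $f$ in the left-hand intersection and combine two translation-expansion vanishing conditions to match the hypothesis of (1) of Theorem 5.2. Applying the ``only if'' direction of (1) of Theorem 5.3 to $f$ yields $f(x_1+a_1,\dots,x_n+a_n)_{[\beta]}=0$ for every $a\in(\prod_{i=1}^{n}S_i)-(\prod_{i=1}^{n}E_i)$ and every $\beta\in\mathbb{N}^{n}$ with $\sum_{i=1}^{n}\lfloor\beta_i/\psi_i(a_i)\rfloor\leqslant t-1$. Since each $E_k\subseteq S_k$ automatically inherits Condition (D) in $R$, applying the ``only if'' direction of (1) of Theorem 5.2 to $f\in\mathcal{I}_t(h_1,\dots,h_n)$ with the multisets $(E_k,\psi_k|_{E_k})$ and polynomials $h_1,\dots,h_n$ gives the same vanishing for every $a\in\prod_{i=1}^{n}E_i$, since $\psi_k|_{E_k}(a_k)=\psi_k(a_k)$ whenever $a_k\in E_k$, so the two predicates align verbatim.

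Because $\prod_{i=1}^{n}S_i$ is the disjoint union of $\prod_{i=1}^{n}E_i$ and $(\prod_{i=1}^{n}S_i)-(\prod_{i=1}^{n}E_i)$, the two steps above together produce the vanishing condition required by (1) of Theorem 5.2 at every point of $\prod_{i=1}^{n}S_i$. Invoking the ``if'' direction of (1) of Theorem 5.2 with the original data $(S_k,\psi_k)$ and polynomials $g_1,\dots,g_n$ then concludes $f\in\mathcal{I}_t(g_1,\dots,g_n)$, closing the argument. There is no genuine obstacle here: the only routine observations are that the translation-expansion predicates in (1) of Theorem 5.2 and (1) of Theorem 5.3 are literally the same, that the sets of $a$'s covered by the two applications together exhaust $\prod_{i=1}^{n}S_i$, and that Condition (D) passes from each $S_k$ to its subset $E_k$.
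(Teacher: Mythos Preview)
Your proof is correct and follows essentially the same approach as the paper, which simply states that the corollary ``immediately follows from (1) of Theorem 5.3 and (1) of Theorem 5.2.'' You have accurately unpacked this: the left-hand side is characterized (via Theorem 5.3(1) for points outside $\prod_i E_i$ and Theorem 5.2(1) applied to the multisets $(E_k,\psi_k)$ for points inside $\prod_i E_i$) by exactly the same translation-expansion vanishing condition that Theorem 5.2(1) shows characterizes $\mathcal{I}_t(g_1,\dots,g_n)$.
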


\setlength{\parindent}{0em}
\begin{remark}
{\bf{(1)}}\,\,If $\prod_{i=1}^{n}E_i=\emptyset$, then (1) of Theorem 5.3 recovers (1) of Theorem 5.2.

{\bf{(2)}}\,\,If $R$ is a field and $\psi_i(u)=1$ for all $i\in[1,n]$ and $u\in S_i$, then (2) of Theorem 5.3 recovers [9, Theorem 4.1]; if $R$ is a field and $t=1$, then (2) of Theorem 5.3 recovers [24, Theorem 7]; and if $t=1$ and $\psi_i(u)=1$ for all $i\in[1,n]$ and $u\in S_i$, then (2) of Theorem 5.3 recovers the punctured Nullstellensatz established in the proof of [18, Theorem 2.2].
\end{remark}

\subsection{A generalization of [33, Theorem 1.5]}
\setlength{\parindent}{2em}
In this subsection, we use Theorems 4.1 and 4.2 to derive another Nullstellensatz. Our Nullstellensatz Theorem 5.4 is inspired by Sauermann and Wigderson \cite{33}, where the authors study the minimum possible degree of a polynomial that vanishes to high order on most of the hypercube $\{0,1\}^{n}\subseteq\mathbb{R}^{n}$ (also see Clifton and Huang \cite{19}). As an application of Theorem 5.4, we will derive a generalization of [33, Theorem 1.5].

For any $k\in[1,n]$, let $S_k$ be a finite subset of $R$, $(S_k,\psi_k)$ be a multiset, and fix $E_k\subseteq S_k$. Define $(g_1,\dots,g_n)$ and $(h_1,\dots,h_n)$ as follows:
\begin{equation}\mbox{$\forall~k\in[1,n]:g_k=\prod_{u\in S_k}(x_k-u)^{\psi_k(u)},~h_k=\prod_{u\in E_k}(x_k-u)^{\psi_k(u)}$}.\end{equation}
We fix $t\in\mathbb{Z}^{+}$, and let
\begin{equation}\mbox{$\Lambda=\{\alpha\in\mathbb{N}^{n}\mid\sum_{i=1}^{n}\alpha_i\in\{t-1,t\}\}$}.\end{equation}
Define $(\varphi(\alpha)\mid\alpha\in\Lambda)\in\Omega^{\Lambda}$ as follows:
\begin{equation}\mbox{$\forall~\alpha\in\mathbb{N}^{n}~s.t.~\sum_{i=1}^{n}\alpha_i=t:\varphi(\alpha)=\prod_{i=1}^{n}{g_i}^{\alpha_i}$},\end{equation}
\begin{equation}\mbox{$\forall~\alpha\in\mathbb{N}^{n}~s.t.~\sum_{i=1}^{n}\alpha_i=t-1:\varphi(\alpha)=(\prod_{i=1}^{n}{g_i}^{\alpha_i})\cdot(\prod_{i=1}^{n}\frac{g_i}{h_i})$}.\end{equation}
Moreover, let $Q\subseteq\Omega$ such that for any $f\in\Omega$, $f\in Q$ if and only if the following two conditions hold:\\
{\bf{(i)}}\,\,For any $a\in(\prod_{i=1}^{n}S_i)-(\prod_{i=1}^{n}E_i)$ and $\beta\in\mathbb{N}^{n}$ with $\sum_{i=1}^{n}\left\lfloor\frac{\beta_i}{\psi_i(a_i)}\right\rfloor\leqslant t-1$, it holds that $f(x_1+a_1,\dots,x_n+a_n)_{[\beta]}=0$;\\
{\bf{(ii)}}\,\,For any $a\in\prod_{i=1}^{n}E_i$ and $\beta\in\mathbb{N}^{n}$ with $\sum_{i=1}^{n}\left\lfloor\frac{\beta_i}{\psi_i(a_i)}\right\rfloor\leqslant t-2$, it holds that $f(x_1+a_1,\dots,x_n+a_n)_{[\beta]}=0$.

\setlength{\parindent}{0em}
\begin{theorem}
{\bf{(1)}}\,\,$Q$ is an ideal of $\Omega$, and $(\varphi(\alpha)\mid\alpha\in\Lambda)$ is a family of monic polynomials with $\{\varphi(\alpha)\mid\alpha\in\Lambda\}\subseteq Q$.

{\bf{(2)}}\,\,Suppose that for any $k\in[1,n]$, $S_{k}$ satisfies Condition (D) in $R$. Then, $(\varphi(\alpha)\mid\alpha\in\Lambda)$ is a Gr\"{o}bner basis of $Q$. Moreover, for an arbitrary $f\in Q$, there exists $(p(\alpha)\mid\alpha\in\Lambda)\in\Omega^{\Lambda}$ such that $f=\sum_{\alpha\in\Lambda}p(\alpha)\cdot\varphi(\alpha)$ and $$\mbox{$\forall~\alpha\in\Lambda:\supp(p(\alpha))+\supp(\varphi(\alpha))\subseteq\Delta(\supp(f))$}.$$

{\bf{(3)}}\,\,Suppose that $\prod_{i=1}^{n}S_{i}\neq\emptyset$, $t\geqslant2$, $(\varphi(\alpha)\mid\alpha\in\Lambda)$ is a Gr\"{o}bner basis of $Q$. Then, for any $k\in[1,n]$, $S_{k}$ satisfies Condition (D) in $R$.
\end{theorem}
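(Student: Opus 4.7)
The plan is to fit Theorem 5.4 directly into the framework of Section~4, with the tuple $(B_a \mid a\in \prod_i S_i)$ chosen to record the two different vanishing orders at points inside and outside $\prod_i E_i$. Concretely, I would set
\[ B_a = \begin{cases} \{(\psi_1(a_1)\theta_1,\ldots,\psi_n(a_n)\theta_n) \mid \theta\in\mathbb{N}^n,\ \textstyle\sum_i \theta_i = t\}, & a\notin \prod_i E_i,\\ \{(\psi_1(a_1)\theta_1,\ldots,\psi_n(a_n)\theta_n) \mid \theta\in\mathbb{N}^n,\ \textstyle\sum_i \theta_i = t-1\}, & a\in \prod_i E_i, \end{cases} \]
and invoke Lemma~2.5 to translate conditions~(i) and~(ii) defining $Q$ into $\supp(f(x_1+a_1,\ldots,x_n+a_n))\subseteq\nabla(B_a)$ for all $a\in \prod_i S_i$, matching (4.2). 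This instantly makes $Q$ an ideal. Using Lemmas~2.1 and 2.7 I would check that each $\varphi(\alpha)$ is monic, with greatest support element
\[ \theta(\alpha) = \begin{cases} (A_1\alpha_1,\ldots,A_n\alpha_n), & \textstyle\sum_i \alpha_i = t,\\ (A_1\alpha_1+A_1-C_1,\ldots,A_n\alpha_n+A_n-C_n), & \textstyle\sum_i \alpha_i = t-1, \end{cases} \]
where $A_i = \deg g_i = \sum_{u\in S_i}\psi_i(u)$ and $C_i = \deg h_i = \sum_{u\in E_i}\psi_i(u)$. To show $\varphi(\alpha)\in Q$, I would expand $\varphi(\alpha)(x+a)$ factor by factor: $g_i^{\alpha_i}(x_i+a_i)$ forces the exponent of $x_i$ to be at least $\alpha_i\psi_i(a_i)$, while $\prod_j(g_j/h_j)(x_j+a_j)$ contributes an extra $\psi_i(a_i)$ in coordinate $i$ exactly when $a_i\notin E_i$; a case split on whether $a\in\prod_i E_i$ shows the bound lands in $\nabla(B_a)$. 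This completes Part~(1).

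For Parts~(2) and~(3) the pivotal computation is $\zeta_1 = \zeta_2$ in the sense of Section~4. By Lemma~2.5,
\[ |\mathbb{N}^n-\nabla(B_a)| \;=\; \Bigl(\prod_i \psi_i(a_i)\Bigr)\binom{n+t-1}{n}\ \ \text{or}\ \ \Bigl(\prod_i \psi_i(a_i)\Bigr)\binom{n+t-2}{n}, \]
according to whether $a\notin\prod_i E_i$ or $a\in\prod_i E_i$. Summing and using $\sum_{a\in\prod_i S_i}\prod_i \psi_i(a_i)=\prod_i A_i$, together with the analogous identity on $\prod_i E_i$, yields
\[ \zeta_1 = \Bigl(\prod_i A_i - \prod_i C_i\Bigr)\binom{n+t-1}{n} + \Bigl(\prod_i C_i\Bigr)\binom{n+t-2}{n}. \]
On the other hand, $D = \{\theta(\alpha) \mid \alpha\in\Lambda\}$ is precisely the set $B\cup C$ of Lemma~2.6 with $(A_1,\ldots,A_n)$ playing the role of $\alpha$ there and $(C_1,\ldots,C_n)$ playing the role of $\gamma$, since the shift $\alpha-\gamma = (A_1-C_1,\ldots,A_n-C_n)$ records the $\prod_j(g_j/h_j)$ correction. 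Hence
\[ \zeta_2 = \Bigl(\prod_i A_i\Bigr)\binom{n+t-1}{n} - \Bigl(\prod_i C_i\Bigr)\binom{n+t-2}{n-1}. \]
The identity $\zeta_1 = \zeta_2$ then reduces, after cancelling the common term and factoring out $\prod_i C_i$, to Pascal's recursion $\binom{n+t-1}{n} = \binom{n+t-2}{n}+\binom{n+t-2}{n-1}$, which is automatic. Combined with Condition~(D) on each $S_k$, Theorem~4.1(1) yields that $(\varphi(\alpha)\mid\alpha\in\Lambda)$ is a Gr\"obner basis of $Q$, and Theorem~4.2 delivers the decomposition of any $f\in Q$ with the support containment claimed in Part~(2).

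For Part~(3), I would invoke Theorem~4.1(2). The hypotheses to verify are $\prod_i S_i\neq\emptyset$ (given), $\mathbf{0}\notin B_a$ for all $a\in \prod_i S_i$ (forced by $t\geq2$, since even the $\sum_i\theta_i=t-1$ constraint defining $B_a$ on $\prod_i E_i$ rules out $\theta=\mathbf{0}$), and $\zeta_1\leq\zeta_2$ (indeed equal, by the Pascal computation above, which does not use Condition~(D)). Since $(\varphi(\alpha)\mid\alpha\in\Lambda)$ is assumed to be a Gr\"obner basis of $Q$, Theorem~4.1(2) then forces every $S_k$ to satisfy Condition~(D). I expect the main technical obstacle to be the identity $\zeta_1=\zeta_2$, whose proof marries Lemmas~2.5 and 2.6 through Pascal's recursion; once this identity is in hand, the whole theorem drops out of the Section~4 machinery.
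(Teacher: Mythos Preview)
Your proposal is correct and follows essentially the same approach as the paper: you define the same tuple $(B_a)$, compute $\zeta_1$ via Lemma~2.5 and $\zeta_2$ via Lemma~2.6, verify $\zeta_1=\zeta_2$ (the paper simplifies both sides to the common form $(\prod_i A_i)\binom{n+t-1}{n}-(\prod_i C_i)\binom{n+t-2}{n-1}$ directly, whereas you invoke Pascal's identity---these are the same computation), and then feed everything into Theorems~4.1 and~4.2. Your treatment of Part~(1) is slightly more explicit than the paper's ``straightforward verification'', but the argument is the same.
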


\begin{proof}
Since (1) follows from some straightforward verification, in what follows, we only prove (2) and (3). Define $(B_a\mid a\in\prod_{i=1}^{n}S_{i})$ as follows:
$$\hspace*{-12mm}\mbox{$\forall~a\in(\prod_{i=1}^{n}S_{i})-(\prod_{i=1}^{n}E_{i}):B_{a}=\{(\psi_1(a_{1})\alpha_1,\dots,\psi_n(a_{n})\alpha_n)\mid\alpha\in\mathbb{N}^{n},\sum_{i=1}^{n}\alpha_i=t\}$},$$
$$\mbox{$\forall~a\in\prod_{i=1}^{n}E_{i}:B_{a}=\{(\psi_1(a_{1})\alpha_1,\dots,\psi_n(a_{n})\alpha_n)\mid\alpha\in\mathbb{N}^{n},\sum_{i=1}^{n}\alpha_i=t-1\}$}.$$
By Lemma 2.5, for any $a\in\prod_{i=1}^{n}S_{i}$, $\mathbb{N}^{n}-\nabla(B_{a})$ is finite. Moreover, Lemma 2.5 implies that
$$Q=\{f\in\Omega\mid \mbox{$\supp(f(x_{1}+a_{1},\dots,x_{n}+a_{n}))\subseteq\nabla(B_{a})$ for all $a\in\prod_{i=1}^{n}S_{i}$}\}.$$
Again by Lemma 2.5, we have
\begin{eqnarray*}
\begin{split}
&\mbox{$\sum_{a\in\prod_{i=1}^{n}S_i}|\mathbb{N}^{n}-\nabla(B_a)|$}\\
&=\mbox{$(\sum_{a\in(\prod_{i=1}^{n}S_i)-(\prod_{i=1}^{n}E_i)}\prod_{i=1}^{n}\psi_i(a_i))\cdot\binom{n+t-1}{n}+(\sum_{a\in\prod_{i=1}^{n}E_i}\prod_{i=1}^{n}\psi_i(a_i))\cdot\binom{n+t-2}{n}$}\\
&=\mbox{$(\sum_{a\in\prod_{i=1}^{n}S_i}\prod_{i=1}^{n}\psi_i(a_i))\binom{n+t-1}{n}-(\sum_{a\in\prod_{i=1}^{n}E_i}\prod_{i=1}^{n}\psi_i(a_i))(\binom{n+t-1}{n}-\binom{n+t-2}{n})$}\\
&=\mbox{$(\prod_{i=1}^{n}(\sum_{u\in S_i}\psi_i(u)))\binom{n+t-1}{n}-(\prod_{i=1}^{n}(\sum_{u\in E_i}\psi_i(u)))\binom{n+t-2}{n-1}$}\\
&=\mbox{$(\prod_{i=1}^{n}\deg(g_i))\binom{n+t-1}{n}-(\prod_{i=1}^{n}\deg(h_i))\binom{n+t-2}{n-1}$}.
\end{split}
\end{eqnarray*}
Now define $(\theta(\alpha)\mid\alpha\in\Lambda)$ as follows:
$$\mbox{$\forall~\alpha\in\mathbb{N}^{n}~s.t.~\sum_{i=1}^{n}\alpha_i=t:\theta(\alpha)=(\deg(g_1)\alpha_1,\dots,\deg(g_n)\alpha_n)$},$$
$$\mbox{$\forall~\alpha\in\mathbb{N}^{n}~s.t.~\sum_{i=1}^{n}\alpha_i=t-1:\theta(\alpha)=(\deg(g_i)\alpha_i+\deg(g_i)-\deg(h_i)\mid i\in[1,n])$},$$
and let $D=\{\theta(\alpha)\mid\alpha\in\Lambda\}$. Then, one can check that for any $\alpha\in\Lambda$, $\theta(\alpha)$ is the greatest element of $\supp(\varphi(\alpha))$. Moreover, by Lemma 2.6, we have
$$\hspace*{-6mm}\mbox{$|\mathbb{N}^{n}-\nabla(D)|=(\prod_{i=1}^{n}\deg(g_i))\binom{n+t-1}{n}-(\prod_{i=1}^{n}\deg(h_i))\binom{n+t-2}{n-1}=\sum_{a\in\prod_{i=1}^{n}S_{i}}|\mathbb{N}^{n}-\nabla(B_{a})|$}.$$
With the above discussion, (2) immediately follows from Theorems 4.1 and 4.2. Finally, noticing that if $t\geqslant2$, then $\mathbf{0}\not\in B_a$ for all $a\in\prod_{i=1}^{n}S_{i}$, (3) immediately follows from Theorem 4.1, as desired.
\end{proof}

\setlength{\parindent}{2em}
Theorem 5.4, together with Theorems 5.2 and 5.3, implies the following corollary.

\setlength{\parindent}{0em}
\begin{corollary}
Suppose that for any $k\in[1,n]$, $S_{k}$ satisfies Condition (D) in $R$, and let $e=\min\{\sum_{u\in S_k}\psi_k(u)\mid k\in[1,n]\}$. Then, the following four statements hold:

{\bf{(1)}}\,\,$Q=(\bigcap_{m=1}^{n}\mathcal{I}_t(g_1,\dots,g_{m-1},g_m/h_m,g_{m+1},\dots,g_n))\cap\mathcal{I}_{t-1}(h_1,\dots,h_n)$;

{\bf{(2)}}\,\,$Q=\mathcal{I}_t(g_1,\dots,g_n)+\mathcal{I}_{t-1}(g_1,\dots,g_n)\cdot(\prod_{i=1}^{n}\frac{g_i}{h_i})$;

{\bf{(3)}}\,\,For any $f\in Q$, $f\not\in\mathcal{I}_t(g_1,\dots,g_n)$ if and only if there exists $v\in\prod_{i=1}^{n}E_i$ and $\gamma\in\mathbb{N}^{n}$ such that $\sum_{i=1}^{n}\left\lfloor\frac{\gamma_i}{\psi_i(v_i)}\right\rfloor=t-1$, $f(x_1+v_1,\dots,x_n+v_n)_{[\gamma]}\neq0$;

{\bf{(4)}}\,\,Suppose that $\prod_{i=1}^{n}E_i\neq\emptyset$. Then, for any $\gamma\in\mathbb{N}^{n}$ with $\sum_{i=1}^{n}\gamma_i=t-1$, we have $\varphi(\gamma)\in Q-\mathcal{I}_t(g_1,\dots,g_n)$. Moreover, it holds that
$$\mbox{$\min\{\deg(f)\mid f\in Q-\mathcal{I}_t(g_1,\dots,g_n)\}=(t-1)e+(\sum_{i=1}^{n}\sum_{u\in S_i-E_i}\psi_i(u))$}.$$
\end{corollary}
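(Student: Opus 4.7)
My strategy is to derive the four statements in sequence, using Theorems 5.2, 5.3 and 5.4 as the main engine; only part (4) requires any genuinely new computation. For (1), I would identify each of the two conditions in the definition of $Q$ with membership in one of the two ideals on the right-hand side. Specifically, Theorem 5.3(1) translates condition (i) into membership in $\bigcap_{m=1}^{n}\mathcal{I}_t(g_1,\dots,g_{m-1},g_m/h_m,g_{m+1},\dots,g_n)$, while Theorem 5.2(1), applied to the multisets $(E_k,\psi_k)$ together with the polynomials $h_k$ and exponent $t-1$, translates condition (ii) into membership in $\mathcal{I}_{t-1}(h_1,\dots,h_n)$. Combining these yields the claimed equality.

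For (2), Theorem 5.4(2) provides a Gr\"obner basis $(\varphi(\alpha)\mid\alpha\in\Lambda)$ of $Q$, so for any $f\in Q$ there is an explicit representation $f=\sum_{\alpha\in\Lambda}p(\alpha)\cdot\varphi(\alpha)$; in particular, $Q$ coincides with the ideal generated by the $\varphi(\alpha)$'s. Splitting $\Lambda$ according to whether $\sum_{i=1}^{n}\alpha_i=t$ or $t-1$, the former family generates $\mathcal{I}_t(g_1,\dots,g_n)$ while the latter, of shape $(\prod_{i=1}^{n}g_i^{\alpha_i})\cdot(\prod_{j=1}^{n}g_j/h_j)$, generates $\mathcal{I}_{t-1}(g_1,\dots,g_n)\cdot(\prod_{j=1}^{n}g_j/h_j)$. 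For (3), I would compare the defining conditions of $Q$ with the characterization of $\mathcal{I}_t(g_1,\dots,g_n)$ from Theorem 5.2(1): the only vanishing equations that a polynomial in $Q$ may fail relative to those defining $\mathcal{I}_t(g_1,\dots,g_n)$ are those at points $a\in\prod_{i=1}^{n}E_i$ for $\beta$ with $\sum_{i=1}^{n}\lfloor\beta_i/\psi_i(a_i)\rfloor=t-1$, since condition (ii) only guarantees vanishing up to $t-2$.

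For (4), the first claim reduces by (3) to exhibiting, for each $\gamma\in\mathbb{N}^{n}$ with $\sum_{i=1}^{n}\gamma_i=t-1$, some $v\in\prod_{i=1}^{n}E_i$ and $\alpha\in\mathbb{N}^{n}$ with $\sum_{i=1}^{n}\lfloor\alpha_i/\psi_i(v_i)\rfloor=t-1$ at which $\varphi(\gamma)(x_1+v_1,\dots,x_n+v_n)_{[\alpha]}\neq 0$. I would fix any $v\in\prod_{i=1}^{n}E_i$ (nonempty by hypothesis) and take $\alpha_i=\psi_i(v_i)\gamma_i$; since $\varphi(\gamma)=\prod_{i=1}^{n}F_i(x_i)$ with $F_i=g_i^{\gamma_i}\cdot(g_i/h_i)\in R[x_i]$, the relevant coefficient factors as the product, over $i\in[1,n]$, of the coefficient of $x_i^{\alpha_i}$ in $F_i(x_i+v_i)$. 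Using that $v_i\in E_i$ implies $v_i\notin S_i-E_i$, a direct computation shows each such coefficient equals $\bigl(\prod_{u\in S_i,\,u\neq v_i}(v_i-u)^{\psi_i(u)}\bigr)^{\gamma_i}\cdot\prod_{u\in S_i-E_i}(v_i-u)^{\psi_i(u)}$, a product of non-zero-divisors in $R$ by Condition (D), hence a non-zero-divisor, in particular nonzero. For the min-degree formula, the upper bound comes from $\deg(\varphi(\gamma))=\sum_{i=1}^{n}\gamma_i\deg(g_i)+\sum_{i=1}^{n}\deg(g_i/h_i)$ applied to $\gamma$ supported at an index $m$ achieving $e$; the lower bound follows from Theorem 5.4(2), which represents any $f\in Q-\mathcal{I}_t(g_1,\dots,g_n)$ as $f=\sum_{\alpha\in\Lambda}p(\alpha)\cdot\varphi(\alpha)$ with $\deg(p(\alpha))+\deg(\varphi(\alpha))\leqslant\deg(f)$, and since $f\notin\mathcal{I}_t(g_1,\dots,g_n)$ at least one $\alpha$ with $\sum_{i=1}^{n}\alpha_i=t-1$ must satisfy $p(\alpha)\neq 0$, forcing $\deg(f)\geqslant(t-1)e+\sum_{i=1}^{n}\sum_{u\in S_i-E_i}\psi_i(u)$.

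The main obstacle is the coefficient computation in (4): one has to handle the case $\gamma_i=0$ cleanly (where $F_i=g_i/h_i$ contributes only via its nonzero constant term at $x_i+v_i$) and invoke Condition (D) at the right moment --- both to ensure each difference $v_i-u$ with $u\neq v_i$ is not a zero divisor of $R$, and to conclude that the resulting finite product of such differences remains not a zero divisor.
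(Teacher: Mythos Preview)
Your argument is correct, and parts (1)--(3) as well as the min-degree inequality in (4) follow exactly the route taken in the paper. The one genuine difference is how you establish $\varphi(\gamma)\notin\mathcal{I}_t(g_1,\dots,g_n)$ in (4). You invoke part (3) and explicitly compute a nonvanishing Taylor coefficient of $\varphi(\gamma)$ at a point $v\in\prod_{i=1}^{n}E_i$, relying on Condition (D) to see that the resulting product of differences $v_i-u$ is a non-zero-divisor. The paper instead argues purely combinatorially on supports: since the greatest element of $\supp(\varphi(\gamma))$ is $(\deg(g_i)\gamma_i+\deg(g_i)-\deg(h_i))_i$, Theorem 5.2(2) would force, if $\varphi(\gamma)\in\mathcal{I}_t(g_1,\dots,g_n)$, the existence of $\alpha\in\mathbb{N}^n$ with $\sum_i\alpha_i=t$ and $\deg(g_i)\alpha_i\leqslant\deg(g_i)\gamma_i+\deg(g_i)-\deg(h_i)$; using $\deg(h_i)\geqslant 1$ (from $E_i\neq\emptyset$) this yields $\alpha\leqslant\gamma$, contradicting $\sum_i\alpha_i=t>t-1=\sum_i\gamma_i$. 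The paper's route is shorter and avoids any computation, while yours has the merit of making the role of Condition (D) and of part (3) fully explicit.
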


\begin{proof}
With the definition of $Q$, (1) follows from (1) of Theorem 5.3 and (1) of Theorem 5.2, (2) follows from (1), (2) of Theorem 5.4, and (3) follows from (1) of Theorem 5.2. Therefore it remains to establish (4). To this end, we first fix $\gamma\in\mathbb{N}^{n}$ with $\sum_{i=1}^{n}\gamma_i=t-1$. Noticing that $\varphi(\gamma)\in Q$, it suffices to show that $\varphi(\gamma)\not\in\mathcal{I}_t(g_1,\dots,g_n)$. By way of contradiction, assume that $\varphi(\gamma)\in\mathcal{I}_t(g_1,\dots,g_n)$. Since $(\deg(g_i)\gamma_i+\deg(g_i)-\deg(h_i)\mid i\in[1,n])$ is the greatest element of $\supp(\varphi(\gamma))$, by Theorem 5.2, there exists $\alpha\in\mathbb{N}^{n}$ such that $\sum_{i=1}^{n}\alpha_i=t$, and $\deg(g_i)\alpha_i\leqslant\deg(g_i)\gamma_i+\deg(g_i)-\deg(h_i)$ for all $i\in[1,n]$. Consider an arbitrary $k\in[1,n]$. Since $E_k\neq\emptyset$, we have $\deg(g_k)\geqslant\deg(h_k)\geqslant1$. This, together with $\deg(g_k)\alpha_k\leqslant\deg(g_k)\gamma_k+\deg(g_k)-\deg(h_k)$, implies that $\alpha_k\leqslant\gamma_k$. It then follows that $\alpha\leqslant\gamma$, $\sum_{i=1}^{n}\alpha_i=t$, $\sum_{i=1}^{n}\gamma_i=t-1$, a contradiction, as desired.

\hspace*{4mm}\,\,Next, choose $l\in[1,n]$ with $\deg(g_l)=e$. By the previous paragraph, we have ${g_l}^{t-1}(\prod_{i=1}^{n}g_i/h_i)\in Q-\mathcal{I}_t(g_1,\dots,g_n)$. Moreover, one can check that
$$\mbox{$\deg({g_l}^{t-1}(\prod_{i=1}^{n}g_i/h_i))=(t-1)e+(\sum_{i=1}^{n}\sum_{u\in S_i-E_i}\psi_i(u))$}.$$

\hspace*{4mm}\,\,Finally, consider an arbitrary $f\in Q-\mathcal{I}_t(g_1,\dots,g_n)$. By Theorem 5.4, there exists $(p(\alpha)\mid\alpha\in\Lambda)\in\Omega^{\Lambda}$ such that $f=\sum_{\alpha\in\Lambda}p(\alpha)\cdot\varphi(\alpha)$, and $\deg(p(\alpha))+\deg(\varphi(\alpha))\leqslant\deg(f)$ for all $\alpha\in\Lambda$. Since $f\not\in\mathcal{I}_t(g_1,\dots,g_n)$, we can choose $\beta\in\mathbb{N}^{n}$ such that $\sum_{i=1}^{n}\beta_i=t-1$, $p(\beta)\neq0$. It then follows that
\begin{eqnarray*}
\begin{split}
\deg(f)\geqslant\deg(\varphi(\beta))&=\mbox{$(\sum_{i=1}^{n}\deg(g_i)\beta_i)+(\sum_{i=1}^{n}\sum_{u\in S_i-E_i}\psi_i(u))$}\\
&\geqslant\mbox{$(t-1)e+(\sum_{i=1}^{n}\sum_{u\in S_i-E_i}\psi_i(u))$},
\end{split}
\end{eqnarray*}
which further completes the proof of (4).
\end{proof}

\setlength{\parindent}{0em}
\begin{remark}
{\bf{(1)}}\,\,If $\prod_{i=1}^{n}E_i=\emptyset$, then (1), (2) of Theorem 5.4 recover (1), (2) of Theorem 5.2.

{\bf{(2)}}\,\,If $R$ is a field, $S_i=\{0,1_R\}$, $E_i=\{0\}$, $\psi_i(0)=\psi_i(1_R)=1$ for all $i\in[1,n]$, then (3), (4) of Corollary 5.2 recover [33, Theorem 1.5].
\end{remark}

\section{Applications of our punctured Nullstellensatz}

\subsection{Hyperplane covering}

\setlength{\parindent}{2em}
Throughout this subsection, for any $k\in[1,n]$, we let $S_k$ be a finite subset of $R$, $(S_k,\psi_k)$ be a multiset, and fix $E_k\subseteq S_k$.

Roughly speaking, given a family of $r$-hyperplanes (see Definition 2.5), we consider the case that every point of $(\prod_{i=1}^{n}S_i)-(\prod_{i=1}^{n}E_i)$ is covered by many $r$-hyperplanes, yet at least one point of $\prod_{i=1}^{n}S_i$ is not covered by any $r$-hyperplane. The following is the most general result of this section.

\setlength{\parindent}{0em}
\begin{theorem}
Suppose that for any $k\in[1,n]$, $S_{k}$ satisfies Condition (D) in $R$. Let $L$ be a finite set, $(e_\lambda\mid\lambda\in L)\in\mathbb{N}^{L}$, and fix $(H_\lambda\mid\lambda\in L)$ such that for any $\lambda\in L$, $H_\lambda$ is an $e_\lambda$-hyperplane of $R^{n}$. Moreover, let $(\rho_{\lambda}\mid\lambda\in L)\in\Omega^{L}$ such that for any $\lambda\in L$, it holds that $\deg(\rho_\lambda)=e_\lambda$ and
$$H_\lambda=\{u\in R^{n}\mid \rho_\lambda(u_1,\dots,u_n)=0\}.$$
Fix $t\in \mathbb{Z}^{+}$. Assume in addition that the following two conditions hold:

{\bf{(i)}}\,\,For any $a\in(\prod_{i=1}^{n}S_i)-(\prod_{i=1}^{n}E_i)$, it holds that
$$\mbox{$|\{\lambda\in L\mid a\in H_\lambda\}|\geqslant\max\{\sum_{i=1}^{n}\beta_i\mid\beta\in \mathbb{N}^{n},\sum_{i=1}^{n}\left\lfloor\frac{\beta_i}{\psi_i(a_i)}\right\rfloor\leqslant t-1\}+1$};$$
{\bf{(ii)}}\,\,There exists $w\in\prod_{i=1}^{n}S_i$ such that $\prod_{\lambda\in L}\rho_{\lambda}(w_1,\dots,w_n)\neq0$.

Then, for any $m\in[1,n]$, it holds that
$$\hspace*{-6mm}\mbox{$\sum_{\lambda\in L}e_{\lambda}\geqslant\deg(\prod_{\lambda\in L}\rho_{\lambda})\geqslant(t-1)(\sum_{u\in S_m-E_m}\psi_m(u))+(\sum_{k=1}^{n}\sum_{u\in S_k-E_k}\psi_k(u))$}.$$
\end{theorem}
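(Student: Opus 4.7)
The plan is to apply Theorem 5.3(2) directly to the product polynomial $f \triangleq \prod_{\lambda \in L} \rho_{\lambda}$. Since $\deg(pq) \leqslant \deg(p) + \deg(q)$ in a polynomial ring over a commutative ring, we immediately obtain the first inequality $\sum_{\lambda \in L} e_{\lambda} \geqslant \deg(f)$. Condition (ii) gives us $w \in \prod_{i=1}^{n} S_{i}$ with $f(w_{1}, \dots, w_{n}) = \prod_{\lambda \in L} \rho_{\lambda}(w_{1}, \dots, w_{n}) \neq 0$, which is precisely the nonvanishing hypothesis of Theorem 5.3(2). So the only real task is to verify that $f$ satisfies the vanishing hypothesis: for every $a \in (\prod_{i=1}^{n} S_{i}) - (\prod_{i=1}^{n} E_{i})$ and every $\beta \in \mathbb{N}^{n}$ with $\sum_{i=1}^{n} \lfloor \beta_{i}/\psi_{i}(a_{i}) \rfloor \leqslant t-1$, we have $f(x_{1}+a_{1}, \dots, x_{n}+a_{n})_{[\beta]} = 0$.

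To establish this, I would fix $a \in (\prod_{i=1}^{n} S_{i}) - (\prod_{i=1}^{n} E_{i})$ and put
\[
M_{a} \triangleq \max\Bigl\{ \textstyle\sum_{i=1}^{n} \beta_{i} \,\Big|\, \beta \in \mathbb{N}^{n},\ \textstyle\sum_{i=1}^{n} \bigl\lfloor \frac{\beta_{i}}{\psi_{i}(a_{i})} \bigr\rfloor \leqslant t-1 \Bigr\},
\]
which is finite by Lemma 2.5 (the set being maximized over is $\mathbb{N}^{n}-\nabla(B)$ for a suitable $B$, hence finite). Let $A_{a} = \{\lambda \in L \mid a \in H_{\lambda}\}$; condition (i) reads $|A_{a}| \geqslant M_{a}+1$. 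For each $\lambda \in A_{a}$, the polynomial $\rho_{\lambda}(x_{1}+a_{1},\dots,x_{n}+a_{n})$ has zero constant term, i.e.\ its support lies in $\mathbb{N}^{n} - \{\mathbf{0}\}$, so every monomial appearing has total degree at least $1$. Taking the product over $\lambda \in A_{a}$, each monomial of $\prod_{\lambda \in A_{a}} \rho_{\lambda}(x+a)$ arises as a product of $|A_{a}|$ monomials of positive total degree, so has total degree at least $|A_{a}| \geqslant M_{a}+1$. Multiplying by the remaining factors $\prod_{\lambda \notin A_{a}} \rho_{\lambda}(x+a)$ can only shift supports upward, so every monomial in $f(x_{1}+a_{1},\dots,x_{n}+a_{n})$ has total degree $\geqslant M_{a}+1$. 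In particular, for every $\beta$ with $\sum_{i=1}^{n} \lfloor \beta_{i}/\psi_{i}(a_{i}) \rfloor \leqslant t-1$ we have $\sum_{i=1}^{n}\beta_{i} \leqslant M_{a} < M_{a}+1$, forcing $f(x_{1}+a_{1},\dots,x_{n}+a_{n})_{[\beta]} = 0$.

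With the vanishing hypothesis verified, Theorem 5.3(2) applied to $f$ yields, for each $m \in [1,n]$,
\[
\deg(f) \geqslant (t-1)\Bigl(\textstyle\sum_{u \in S_{m}-E_{m}} \psi_{m}(u)\Bigr) + \Bigl(\textstyle\sum_{k=1}^{n} \sum_{u \in S_{k}-E_{k}} \psi_{k}(u)\Bigr),
\]
which combined with $\sum_{\lambda \in L} e_{\lambda} \geqslant \deg(f)$ completes the proof. The only nontrivial step is the degree-lifting argument in the second paragraph; I expect this to be routine in a field but one must check that the support-based bound behaves correctly over an arbitrary commutative ring, which it does since multiplication of monomials is additive in total degree and the support of a product is contained in the set-theoretic sum of supports.
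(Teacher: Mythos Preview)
Your proposal is correct and follows essentially the same approach as the paper: both verify the vanishing hypothesis for $f=\prod_{\lambda\in L}\rho_\lambda$ via the support inclusion $\supp(f(x+a))\subseteq\sum_{\lambda}\supp(\rho_\lambda(x+a))$, noting that each factor with $\lambda\in A_a$ contributes total degree at least~$1$, and then invoke Theorem~5.3(2). The paper phrases this step as a proof by contradiction (assuming a nonzero coefficient and deriving $\sum_i\gamma_i\geqslant|\{\lambda:a\in H_\lambda\}|$), while you phrase it positively, but the underlying argument is identical.
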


\begin{proof}
Consider $a\in(\prod_{i=1}^{n}S_i)-(\prod_{i=1}^{n}E_i)$ and $\gamma\in \mathbb{N}^{n}$ with $\sum_{i=1}^{n}\left\lfloor\frac{\gamma_i}{\psi_i(a_i)}\right\rfloor\leqslant t-1$. We claim that
$$\mbox{$(\prod_{\lambda\in L}\rho_{\lambda})(x_1+a_1,\dots,x_n+a_n)_{[\gamma]}=0$}.$$
Indeed, suppose that this does not hold. Then, we can choose $(\theta(\lambda)\mid\lambda\in L)$ such that $\gamma=\sum_{\lambda\in L}\theta(\lambda)$, where $\theta(\lambda)\in\supp(\rho_{\lambda}(x_1+a_1,\dots,x_n+a_n))$ for all $\lambda\in L$. For any $\lambda\in L$ with $\rho_{\lambda}(a_1,\dots,a_n)=0$, noticing that $\rho_{\lambda}(x_1+a_1,\dots,x_n+a_n)_{[\mathbf{0}]}=0$, we have $\theta(\lambda)\neq\mathbf{0}$. It then follows that
$$\mbox{$\sum_{i=1}^{n}\gamma_i\geqslant|\{\lambda\in L\mid\rho_{\lambda}(a_1,\dots,a_n)=0)\}|=|\{\lambda\in L\mid a\in H_\lambda\}|$},$$
a contradiction to (i), as desired. Now an application of Theorem 5.3 completes the proof.
\end{proof}

\setlength{\parindent}{2em}
Now we consider the special case that $R$ is an integral domain.

\setlength{\parindent}{0em}
\begin{corollary}
Suppose that $R$ is an integral domain. Let $L$ be a finite set, and let $(H_\lambda\mid\lambda\in L)$ be a tuple of subsets of $R^{n}$ with $\prod_{i=1}^{n}S_i\nsubseteq\bigcup_{\lambda\in L}H_\lambda$. Fix $t\in \mathbb{Z}^{+}$. Assume in addition that for any $a\in(\prod_{i=1}^{n}S_i)-(\prod_{i=1}^{n}E_i)$,
$$\mbox{$|\{\lambda\in L\mid a\in H_\lambda\}|\geqslant\max\{\sum_{i=1}^{n}\beta_i\mid\beta\in \mathbb{N}^{n},\sum_{i=1}^{n}\left\lfloor\frac{\beta_i}{\psi_i(a_i)}\right\rfloor\leqslant t-1\}+1$}.$$
Then, the following two statements hold true:

{\bf{(1)}}\,\,Let $(e_\lambda\mid\lambda\in L)\in\mathbb{N}^{L}$, and suppose that for any $\lambda\in L$, $H_\lambda$ is an $e_\lambda$-hyperplane of $R^{n}$. Then, for any $m\in[1,n]$, it holds that
$$\mbox{$\sum_{\lambda\in L}e_{\lambda}\geqslant(t-1)(\sum_{u\in S_m-E_m}\psi_m(u))+(\sum_{k=1}^{n}\sum_{u\in S_k-E_k}\psi_k(u))$};$$

{\bf{(2)}}\,\,Suppose that for any $\lambda\in L$, $H_\lambda$ is a hyperplane of $R^{n}$. Then, for any $m\in[1,n]$, it holds that
$$\mbox{$|L|\geqslant(t-1)(\sum_{u\in S_m-E_m}\psi_m(u))+(\sum_{k=1}^{n}\sum_{u\in S_k-E_k}\psi_k(u))$}.$$
\end{corollary}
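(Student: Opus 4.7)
The plan is to derive both parts as essentially immediate consequences of Theorem 6.1, with the integral domain hypothesis doing the work of upgrading the set-theoretic covering data into the algebraic hypotheses (i) and (ii) required by that theorem.

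First, I would observe that since $R$ is an integral domain, for any $k \in [1,n]$ and any distinct $a,b \in S_k$, the difference $b - a$ is nonzero and hence, by the defining property of an integral domain, not a zero divisor of $R$; therefore each $S_k$ satisfies Condition (D) in $R$. Next, for each $\lambda \in L$, I invoke Definition 2.5 to pick a polynomial $\rho_\lambda \in \Omega$ of degree $e_\lambda$ with $H_\lambda = \{u \in R^n \mid \rho_\lambda(u_1,\dots,u_n) = 0\}$. The hypothesis $\prod_{i=1}^{n} S_i \nsubseteq \bigcup_{\lambda \in L} H_\lambda$ furnishes some $w \in \prod_{i=1}^{n} S_i$ with $\rho_\lambda(w_1,\dots,w_n) \neq 0$ for every $\lambda \in L$. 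Since $R$ is an integral domain and $L$ is finite, the product $\prod_{\lambda \in L} \rho_\lambda(w_1,\dots,w_n)$ is nonzero, which is exactly condition (ii) of Theorem 6.1. The covering hypothesis on points of $(\prod_{i=1}^{n} S_i) - (\prod_{i=1}^{n} E_i)$ is condition (i) of Theorem 6.1 verbatim.

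Applying Theorem 6.1 then yields, for every $m \in [1,n]$,
$$\sum_{\lambda \in L} e_\lambda \;\geq\; (t-1)\Bigl(\sum_{u \in S_m - E_m} \psi_m(u)\Bigr) + \Bigl(\sum_{k=1}^{n} \sum_{u \in S_k - E_k} \psi_k(u)\Bigr),$$
which is statement (1). For statement (2), I note that by Definition 2.5, a hyperplane of $R^n$ is exactly a $1$-hyperplane, so $e_\lambda = 1$ for every $\lambda \in L$ and $\sum_{\lambda \in L} e_\lambda = |L|$; substituting into (1) yields the claimed bound.

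There is no real obstacle here; the only bookkeeping point worth highlighting is that the integral domain hypothesis is used in two distinct places: once to ensure Condition (D) for each $S_k$ (so that Theorem 6.1 applies at all), and once to promote the pointwise non-vanishing of each individual $\rho_\lambda$ at $w$ into non-vanishing of their product, which is what condition (ii) actually demands. If $L = \emptyset$, the product over $L$ is $1_R \neq 0$, and the inequalities in (1) and (2) reduce to $0 \geq 0$ in the degenerate case $t = 1$ with $E_k = S_k$ for all $k$, and are otherwise forced by the hypothesis on coverings to be vacuously or trivially satisfied, so no separate argument is needed.
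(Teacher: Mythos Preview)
Your proposal is correct and follows essentially the same approach as the paper's own proof: choose defining polynomials $\rho_\lambda$ via Definition 2.5, use the uncovered point $w$ together with the integral domain hypothesis to verify condition (ii) of Theorem 6.1, and then apply that theorem directly (with (2) following from (1) since hyperplanes are $1$-hyperplanes). Your explicit verification of Condition (D) and your remark on the $L=\emptyset$ edge case are additional details the paper leaves implicit, but they do not change the argument.
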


\begin{proof}
{\bf{(1)}}\,\,By Definition 2.5, we can choose $(\rho_{\lambda}\mid\lambda\in L)\in\Omega^{L}$ such that for any $\lambda\in L$, it holds that $H_\lambda=\{u\in R^{n}\mid \rho_\lambda(u_1,\dots,u_n)=0\}$ and $\deg(\rho_\lambda)=e_\lambda$. Now we choose $w\in\prod_{i=1}^{n}S_i$ with $w\not\in\bigcup_{\lambda\in L}H_\lambda$. For any $\lambda\in L$, by $w\not\in H_\lambda$, we have $\rho_\lambda(w_1,\dots,w_n)\neq0$. Noticing that $R$ is an integral domain, we further derive that $\prod_{\lambda\in L}\rho_\lambda(w_1,\dots,w_n)\neq0$. Now the desired result immediately follows from Theorem 6.1.

{\bf{(2)}}\,\,This immediately follows from (1).
\end{proof}

\setlength{\parindent}{2em}
If $R$ is a field, then we have the following consequence of Corollary 6.1.

\setlength{\parindent}{0em}
\begin{corollary}
Suppose that $R$ is a field. Let $\langle~,~\rangle:R^{n}\times R^{n}\longrightarrow R$ be a non-degenerate bilinear map. Let $Y$ be a finite set, $(\eta_\lambda\mid\lambda\in Y)\in(R^{n})^{Y}$, $(c_\lambda\mid\lambda\in Y)\in R^{Y}$. Fix $t\in \mathbb{Z}^{+}$. Assume in addition that the following two conditions hold:

{\bf{(i)}}\,\,For any $a\in(\prod_{i=1}^{n}S_i)-(\prod_{i=1}^{n}E_i)$, it holds that
$$\mbox{$|\{\lambda\in Y\mid \langle\eta_\lambda,a\rangle=c_\lambda\}|\geqslant\max\{\sum_{i=1}^{n}\beta_i\mid\beta\in \mathbb{N}^{n},\sum_{i=1}^{n}\left\lfloor\frac{\beta_i}{\psi_i(a_i)}\right\rfloor\leqslant t-1\}+1$};$$
{\bf{(ii)}}\,\,There exists $w\in\prod_{i=1}^{n}S_i$ such that $\langle\eta_\lambda,w\rangle\neq c_\lambda$ for all $\lambda\in Y$.

Then, for any $m\in[1,n]$, it holds that
$$\mbox{$|\{\lambda\in Y\mid\eta_\lambda\neq0\}|\geqslant(t-1)(\sum_{u\in S_m-E_m}\psi_m(u))+(\sum_{k=1}^{n}\sum_{u\in S_k-E_k}\psi_k(u))$}.$$
\end{corollary}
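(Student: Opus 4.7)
The plan is to reduce Corollary 6.2 to part (2) of Corollary 6.1 by interpreting each equation $\langle \eta_\lambda,u\rangle = c_\lambda$ as cutting out a hyperplane in the sense of Definition 2.5, but only for those indices $\lambda$ with $\eta_\lambda \neq 0$; the indices with $\eta_\lambda = 0$ must be sifted out separately, since they do not correspond to hyperplanes.

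First, I would set $L = \{\lambda \in Y \mid \eta_\lambda \neq 0\}$ and, for each $\lambda \in L$, introduce the polynomial $\rho_\lambda(x_1,\dots,x_n) = \langle \eta_\lambda,(x_1,\dots,x_n)\rangle - c_\lambda \in \Omega$. Because $\langle\,,\rangle$ is non-degenerate and $\eta_\lambda \neq 0$, the map $u \mapsto \langle \eta_\lambda,u\rangle$ is a nonzero linear functional on $R^n$, so $\rho_\lambda$ has degree exactly $1$. Hence $H_\lambda := \{u \in R^n \mid \rho_\lambda(u_1,\dots,u_n) = 0\} = \{u \in R^n \mid \langle \eta_\lambda,u\rangle = c_\lambda\}$ is genuinely a hyperplane of $R^n$ in the sense of Definition 2.5.

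Next, I would verify the hypotheses of part (2) of Corollary 6.1 for the family $(H_\lambda \mid \lambda \in L)$. By condition (ii), there exists $w \in \prod_{i=1}^n S_i$ with $\langle \eta_\lambda,w\rangle \neq c_\lambda$ for every $\lambda \in Y$, so $w \notin H_\lambda$ for all $\lambda \in L$, which gives $\prod_{i=1}^n S_i \not\subseteq \bigcup_{\lambda \in L} H_\lambda$. Moreover, for any $\lambda \in Y - L$ we have $\eta_\lambda = 0$, so $\langle \eta_\lambda,w\rangle = 0 \neq c_\lambda$ forces $c_\lambda \neq 0$; consequently $\{u \in R^n \mid \langle \eta_\lambda,u\rangle = c_\lambda\} = \emptyset$ for such $\lambda$. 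Therefore for every $a \in R^n$ we have the identity $\{\lambda \in Y \mid \langle \eta_\lambda,a\rangle = c_\lambda\} = \{\lambda \in L \mid a \in H_\lambda\}$, and condition (i) translates verbatim into the covering hypothesis needed by Corollary 6.1.

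Since $R$ is a field (hence an integral domain), part (2) of Corollary 6.1 is applicable and yields, for every $m \in [1,n]$,
$$|L| \geqslant (t-1)\Bigl(\sum_{u\in S_m-E_m}\psi_m(u)\Bigr)+\Bigl(\sum_{k=1}^{n}\sum_{u\in S_k-E_k}\psi_k(u)\Bigr),$$
which is exactly the required inequality, because $|L| = |\{\lambda \in Y \mid \eta_\lambda \neq 0\}|$. There is no real obstacle here; the only subtlety is the bookkeeping needed to discard the indices with $\eta_\lambda = 0$, and the role of non-degeneracy is precisely to guarantee that $\langle \eta_\lambda,\cdot\rangle$ is a nonzero linear functional whenever $\eta_\lambda \neq 0$ (without non-degeneracy, a nonzero $\eta_\lambda$ might induce the zero functional, in which case $\rho_\lambda$ would be the constant $-c_\lambda$ and $H_\lambda$ would fail to be a hyperplane in the sense of Definition 2.5).
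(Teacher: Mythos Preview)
Your proposal is correct and follows essentially the same approach as the paper: set $L=\{\lambda\in Y\mid\eta_\lambda\neq0\}$, use non-degeneracy to realize each $H_\lambda$ for $\lambda\in L$ as a genuine hyperplane, use condition (ii) both to get a non-covered point and to force $c_\lambda\neq0$ whenever $\eta_\lambda=0$ (so that those indices contribute nothing to the covering count), and then invoke part (2) of Corollary 6.1. Your write-up is in fact more detailed than the paper's, which leaves the verification of $\{\lambda\in Y\mid\langle\eta_\lambda,a\rangle=c_\lambda\}=\{\lambda\in L\mid a\in H_\lambda\}$ and the role of non-degeneracy to the reader.
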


\begin{proof}
Let $L=\{\lambda\in Y \mid\eta_\lambda\neq0\}$. Moreover, for any $\lambda\in L$, we let $H_\lambda=\{u\in R^{n}\mid \langle\eta_\lambda,u\rangle=c_\lambda\}$. Then, $(H_\lambda\mid\lambda\in L)$ is a tuple of hyperplanes of $R^{n}$. One can check from (ii) that $\prod_{i=1}^{n}S_i\nsubseteq\bigcup_{\lambda\in L}H_\lambda$; and moreover, for any $a\in R^{n}$, it holds that $\{\lambda\in Y\mid \langle\eta_\lambda,a\rangle=c_\lambda\}=\{\lambda\in L\mid a\in H_\lambda\}$. Hence an application of (2) of Corollary 6.1 implies the desired result.
\end{proof}

\setlength{\parindent}{0em}
\begin{remark}
{\bf{(1)}}\,\,In Theorem 6.1, we use the weaker assumption that all the $S_i$'s satisfy Condition (D) instead of that $R$ is an integral domain. This is inspired by [23, Theorem 10] which considers hyperplanes covering the Boolean cube $\{0,1_R\}^{n}$. We note that Theorem 6.1 includes [23, Theorem 10] as a special case.

{\bf{(2)}}\,\,Suppose that $R$ is a field. If $\psi_i(u)=1$ for all $i\in[1,n]$ and $u\in S_i$, then Corollary 6.1 recovers [9, Theorem 5.3]; and if $t=1$ and $\prod_{i=1}^{n}E_i=\{0\}$, then Corollary 6.1 recovers [23, Theorem 9] and [24, Theorem 12].

{\bf{(3)}}\,\,Suppose that $R$ is a finite field with $|R|=q$. Fix $t\in \mathbb{Z}^{+}$. Let $Y$ be a finite set, and let $(\eta_\lambda\mid\lambda\in Y)\in(R^{n})^{Y}$ such that $|\{\lambda\in Y\mid \eta_\lambda\in H\}|\geqslant t$ for any hyperplane $H\subseteq R^{n}$. Then, Corollary 6.2 implies that
\begin{equation}|Y|\geqslant(n+t-1)(q-1)+1,\end{equation}
which recovers Bruen's result [15, Theorem 2.1] (also see [9, Theorem 5.2]). (6.1) was first established for $t=1$ by Jamison, and then independently by Brouwer and Schrijver via different methods (see \cite{22,13}). We also refer the reader to \cite{7,8,26} for improvements and relevant work of (6.1).
\end{remark}

\subsection{The generalized Alon-F\"{u}redi theorem}

\setlength{\parindent}{2em}
In [4, Theorem 5], Alon and F\"{u}redi establish a general lower bound for the number of nonzero points of a polynomial $f\in\mathbb{F}[x_1,\dots,x_n]$ in $\prod_{i=1}^{n}S_i$, where $\mathbb{F}$ is a field and the $S_i$'s are finite subsets of $\mathbb{F}$. In [9, Corollary 4.3], Ball and Serra recover the Alon-F\"{u}redi theorem by using their punctured Nullstellensatz. In [18, Theorem 2.2], Clark generalizes the Alon-F\"{u}redi theorem to polynomials over a commutative ring by using a variant of Ball and Serra's punctured Nullstellensatz. In [12, Theorem 1.2], Bishnoi, Clark, Potukuchi and Schmitt further generalize [18, Theorem 2.2]. Their result is referred to as the generalized Alon-F\"{u}redi theorem, and has many applications in combinatorics and coding theory (see [12, Sections 4--6] for more details).

In this section, we give an alternative proof of the generalized Alon-F\"{u}redi theorem by using Theorems 5.2 and 5.3. Our proof closely follows the spirits in the proofs of [9, Corollary 4.3] and [18, Theorem 2.2].

\setlength{\parindent}{0em}
\begin{theorem}(Generalized Alon-F\"{u}redi Theorem, [12, Theorem 1.2])
Let $S_1,\dots,S_n$ be nonempty finite subsets of $R$ such that for any $k\in[1,n]$, $S_{k}$ satisfies Condition (D) in $R$. Let $\beta\in\mathbb{N}^{n}$ with $\beta\leqslant(|S_1|-1,\dots,|S_n|-1)$, and let $f\in\Omega-\{0\}$ such that $\alpha\leqslant\beta$ for all $\alpha\in\supp(f)$. Then, there exists $\mu\in\mathbb{N}^{n}$ satisfying the following three conditions:

{\bf{(1)}}\,\,$(|S_1|-\beta_1,\dots,|S_n|-\beta_n)\leqslant\mu\leqslant(|S_1|,\dots,|S_n|)$;

{\bf{(2)}}\,\,$\sum_{k=1}^{n}\mu_k=(\sum_{k=1}^{n}|S_k|)-\deg(f)$;

{\bf{(3)}}\,\,$|\{v\in\prod_{i=1}^{n}S_i\mid f(v_1,\dots,v_n)\neq0\}|\geqslant\prod_{k=1}^{n}\mu_k$.
\end{theorem}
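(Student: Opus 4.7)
The plan is to prove the statement by induction on the number of variables $n$, closely following the spirit of the Alon--F\"{u}redi arguments in [9, Corollary 4.3] and [18, Theorem 2.2]. Theorem 5.3 (equivalently, the one-variable case of Theorem 5.2) will serve as the essential algebraic input at every stage, while the induction reduces an $n$-variable instance to an $(n-1)$-variable instance plus a one-variable fiber count.

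For the base case $n=1$, I would apply Theorem 5.3 with $t=1$, $\psi_1\equiv 1$ and $E_1=\{u\in S_1\mid f(u)\neq 0\}$. Since $\deg(f)\leqslant\beta_1\leqslant|S_1|-1$ and $S_1$ satisfies Condition (D), the Chevalley--Alon--Tarsi--Schauz Lemma (Remark 5.2) forces $E_1\neq\emptyset$, so Theorem 5.3 gives $\deg(f)\geqslant|S_1|-|E_1|$, i.e.\ $N=|E_1|\geqslant|S_1|-\deg(f)$, and taking $\mu_1=|S_1|-\deg(f)$ verifies (1)--(3).

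For the inductive step, I would expand $f=\sum_{j=0}^{\beta_n}x_n^{j}f_j$ with $f_j\in R[x_1,\dots,x_{n-1}]$, and set $j^{*}=\max\{j\mid f_j\neq 0\}$. Then $f_{j^{*}}$ is nonzero, has all exponents bounded by $(\beta_1,\dots,\beta_{n-1})$, and satisfies $j^{*}+\deg(f_{j^{*}})\leqslant\deg(f)$. Applying the inductive hypothesis to $f_{j^{*}}$ produces $\mu'\in\mathbb{N}^{n-1}$ with $(|S_i|-\beta_i)\leqslant\mu'_i\leqslant|S_i|$ for $i<n$, $\sum_{i<n}\mu'_i=\sum_{i<n}|S_i|-\deg(f_{j^{*}})$, and at least $\prod_{i<n}\mu'_i$ many $v'\in\prod_{i<n}S_i$ with $f_{j^{*}}(v')\neq 0$. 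For each such $v'$, the univariate polynomial $g_{v'}(x_n)=f(v',x_n)$ has $x_n^{j^{*}}$-coefficient equal to $f_{j^{*}}(v')\neq 0$ and no higher-order term, so $\deg(g_{v'})=j^{*}$; the base case applied to $g_{v'}$ produces $|S_n|-j^{*}$ values of $u\in S_n$ with $f(v',u)\neq 0$. Multiplying yields $N\geqslant\prod_{i<n}\mu'_i\cdot(|S_n|-j^{*})$, so the tentative $\mu=(\mu'_1,\dots,\mu'_{n-1},|S_n|-j^{*})$ meets (1) and (3).

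The main obstacle is reconciling condition (2): this tentative $\mu$ satisfies $\sum_k\mu_k=\sum_k|S_k|-(j^{*}+\deg(f_{j^{*}}))$, which exceeds the required value $\sum_k|S_k|-\deg(f)$ by the surplus $\deg(f)-(j^{*}+\deg(f_{j^{*}}))\geqslant 0$. The hypothesis that every exponent vector of $f$ lies below $\beta$ forces $\deg(f)\leqslant\sum_i\beta_i$, and hence the total slack $\sum_k(\mu_k-(|S_k|-\beta_k))=\sum_i\beta_i-(j^{*}+\deg(f_{j^{*}}))$ is at least this surplus. One may therefore decrease individual coordinates of $\mu$, keeping each $\mu_k\geqslant|S_k|-\beta_k$, until (2) becomes an equality; condition (3) is preserved because $\prod_k\mu_k$ is monotone in each coordinate. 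The delicate part is this bookkeeping, but it reduces to the single inequality $\deg(f)\leqslant\sum_i\beta_i$, which is immediate from the support hypothesis.
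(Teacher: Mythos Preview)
Your argument is correct, but it follows a genuinely different route from the paper's proof.

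The paper inducts on $\beta$ (with respect to $\leqslant$), not on $n$. It first sets $E_k=\{v_k\mid v\in Q\}$ where $Q$ is the full set of non-vanishing points, and applies the $n$-variable punctured Nullstellensatz (Theorem~5.3 with $t=1$) to obtain the divisibility $\prod_{k=1}^{n}\prod_{u\in S_k\setminus E_k}(x_k-u)\mid f$. Then it picks a coordinate $m$ with $\beta_m\geqslant1$ and, by averaging, a value $b\in E_m$ with $|Q|\geqslant|E_m|\cdot|\{v\in Q:v_m=b\}|$; the divisibility gives $\deg f(x_1,\dots,b,\dots,x_n)\leqslant\deg(f)-(|S_m|-|E_m|)$, and induction (with $\beta_m$ replaced by $0$) finishes, followed by the same final slack-adjustment you perform.

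Your proof is essentially a Schwartz--Zippel/DeMillo--Lipton style induction on $n$ via the leading coefficient $f_{j^{*}}$, and it invokes Theorem~5.3 only in its trivial univariate incarnation (``at most $\deg g$ roots in $S_n$''). This is shorter and more elementary, and your bookkeeping for condition~(2) is handled correctly. The trade-off is that the paper's stated purpose in Section~6.2 is precisely to exhibit the generalized Alon--F\"{u}redi theorem as an application of the full punctured Nullstellensatz; your argument bypasses that machinery and hence does not serve the expository goal, even though it proves the theorem.
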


\begin{proof}
Let $Q=\{v\in\prod_{i=1}^{n}S_i\mid f(v_1,\dots,v_n)\neq0\}$. Moreover, for any $k\in[1,n]$, let $E_k=\{v_k\mid v\in Q\}$, and let $\psi_k$ denote the constant $1$ map defined on $S_k$. Then, we have $f(w_1,\dots,w_n)=0$ for all $w\in(\prod_{i=1}^{n}S_i)-(\prod_{i=1}^{n}E_i)$. Since $f\neq0$ and $\alpha\leqslant(|S_1|-1,\dots,|S_n|-1)$ for all $\alpha\in\supp(f)$, an application of (2) of Theorem 5.2 and Theorem 5.3 to $(S_1,\psi_1),\dots,(S_n,\psi_n)$ and $t=1$ leads to the facts that $Q\neq\emptyset$ and
\begin{equation}\mbox{$\prod_{k=1}^{n}\prod_{u\in S_k-E_k}(x_k-u)\mid f$}.\end{equation}
If $\beta=\mathbf{0}$, then $(|S_1|,\dots,|S_n|)$ satisfies Conditions (1)--(3), as desired. Therefore in what follows, we assume that $\beta\neq\mathbf{0}$. Hence we can choose $m\in[1,n]$ with $\beta_m\geqslant1$. Since $Q\neq\emptyset$, we can choose $b\in E_m$ such that
\begin{equation}|Q|\geqslant|E_m|\cdot|\{v\in Q\mid v_m=b\}|.\end{equation}
Let $g\triangleq f(x_1,\dots,x_{m-1},b,x_{m+1},\dots,x_n)$. It is straightforward to verify that
\begin{equation}\mbox{$|\{v\in\prod_{i=1}^{n}S_i\mid g(v_1,\dots,v_n)\neq0\}|=|S_m|\cdot|\{v\in Q\mid v_m=b\}|$}.\end{equation}
By $b\in E_m$ and (6.4), we have $g\neq0$. Moreover, (6.3) and (6.4) imply that
\begin{equation}\mbox{$|Q|\geqslant\frac{|E_m|}{|S_m|}\cdot|\{v\in\prod_{i=1}^{n}S_i\mid g(v_1,\dots,v_n)\neq0\}|$}.\end{equation}
Let $\theta=(\beta_1,\dots,\beta_{m-1},0,\beta_{m+1},\dots,\beta_n)\in\mathbb{N}^{n}$. Then, it is straightforward to verify that $\gamma\leqslant\theta$ for all $\gamma\in\supp(g)$. Since $\theta\leqslant\beta$, $\theta\neq\beta$, by induction, we can choose $\varepsilon\in\mathbb{N}^{n}$ satisfying the following three conditions:

$(i)$\,\,$(|S_1|-\theta_1,\dots,|S_n|-\theta_n)\leqslant\varepsilon\leqslant(|S_1|,\dots,|S_n|)$;

$(ii)$\,\,$\sum_{k=1}^{n}\varepsilon_k=(\sum_{k=1}^{n}|S_k|)-\deg(g)$;

$(iii)$\,\,$|\{v\in\prod_{i=1}^{n}S_i\mid g(v_1,\dots,v_n)\neq0\}|\geqslant\prod_{k=1}^{n}\varepsilon_k$.

We note that from $\theta_m=0$ and $(i)$, we have $\varepsilon_m=|S_m|$. Now let
\begin{equation}\lambda=(\varepsilon_1,\dots,\varepsilon_{m-1},|E_m|,\varepsilon_{m+1},\dots,\varepsilon_n)\in\mathbb{N}^{n}.\end{equation}
We will show that $\lambda$ satisfies the following three conditions:

$(iv)$\,\,$(|S_1|-\beta_1,\dots,|S_n|-\beta_n)\leqslant\lambda\leqslant(|S_1|,\dots,|S_n|)$;

$(v)$\,\,$\sum_{k=1}^{n}\lambda_k\geqslant(\sum_{k=1}^{n}|S_k|)-\deg(f)$;

$(vi)$\,\,$|Q|\geqslant\prod_{k=1}^{n}\lambda_k$.

Indeed, for any $k\in[1,n]-\{m\}$, by $\beta_k=\theta_k$, $\lambda_k=\varepsilon_k$ and $(i)$, we have $|S_k|-\beta_k\leqslant\lambda_k\leqslant|S_k|$. From (6.2), $f\neq0$ and (2) of Lemma 2.1, we can choose $\alpha\in\supp(f)$ with $|S_m|-|E_m|\leqslant\alpha_m$. Since $\alpha\leqslant\beta$, we have $|S_m|-|E_m|\leqslant\beta_m$, which, together with $\lambda_m=|E_m|$, implies that $|S_m|-\beta_m\leqslant\lambda_m\leqslant|S_m|$, which further establishes $(iv)$. Next, by (6.2), we can choose $h\in\Omega$ with $f=(\prod_{u\in S_m-E_m}(x_m-u))\cdot h$. It then follows that
$$\mbox{$g=(\prod_{u\in S_m-E_m}(b-u))\cdot h(x_1,\dots,x_{m-1},b,x_{m+1},\dots,x_n)$},$$
which, together with (4) of Lemma 2.1, further implies that
\begin{equation}\deg(g)\leqslant\deg(h)=\deg(f)-|S_m|+|E_m|.\end{equation}
With (6.6), $\varepsilon_m=|S_m|$ and some straightforward computation, $(v)$ follows from $(ii)$ and (6.7), and $(vi)$ follows from $(iii)$ and (6.5), as desired. Noticing that $\alpha\leqslant\beta$ for all $\alpha\in\supp(f)$, we have $\deg(f)\leqslant\sum_{k=1}^{n}\beta_k$, which, together with $(v)$, implies that
$$\mbox{$\sum_{k=1}^{n}(|S_k|-\beta_k)\leqslant(\sum_{k=1}^{n}|S_k|)-\deg(f)\leqslant\sum_{k=1}^{n}\lambda_k$}.$$
Since $(|S_1|-\beta_1,\dots,|S_n|-\beta_n)\leqslant\lambda$ (see $(iv)$), we can choose $\mu\in\mathbb{N}^{n}$ with $(|S_1|-\beta_1,\dots,|S_n|-\beta_n)\leqslant\mu\leqslant\lambda$ and $\sum_{k=1}^{n}\mu_k=(\sum_{k=1}^{n}|S_k|)-\deg(f)$. By $(iv)$ and $(vi)$, we conclude that $\mu$ satisfies Conditions (1)--(3), as desired.
\end{proof}

\setlength{\parindent}{0em}
\begin{remark}
It has been proven in [12, Theorem 1.2] that the lower bound is sharp in all cases. We refer the reader to [12, Section 3.3] for more details.
\end{remark}

\section*{Appendix}\appendix

\section{Proof of Lemma 2.1}

\setlength{\parindent}{0em}
{\bf{(1)}}\,\,Let $\gamma\in\max(\supp(f))$. Then, we have
$$(f\cdot g)_{[\gamma+\theta]}=\sum_{(\alpha\in\supp(f),\beta\in\supp(g),\alpha+\beta=\gamma+\theta)}f_{[\alpha]} g_{[\beta]}.$$
For $\alpha\in\supp(f)$, $\beta\in\supp(g)$ with $\alpha+\beta=\gamma+\theta$, since $\theta$ is the greatest element of $\supp(g)$, we have $\beta\leqslant\theta$, and hence $\gamma\leqslant\alpha$, which, together with $\gamma\in\max(\supp(f))$, $\alpha\in\supp(f)$, implies that $\alpha=\gamma$, which further implies that $\beta=\theta$. Therefore we have $(f\cdot g)_{[\gamma+\theta]}=f_{[\gamma]} g_{[\theta]}=f_{[\gamma]}$, as desired.

{\bf{(2)}} and {\bf{(3)}}\,\,We begin by noting that for $A\subseteq B\subseteq\mathbb{N}^{n}$ such that $B$ is finite and $\max(B)\subseteq A$, it holds that $\max(A)=\max(B)$, $\Delta(A)=\Delta(B)$. Moreover, for any $C,D\subseteq\mathbb{N}^{n}$, it holds that $\Delta(C+D)=\Delta(C)+\Delta(D)$. Now we have $\supp(f\cdot g)\subseteq\supp(f)+\supp(g)\subseteq\mathbb{N}^{n}$ and $\supp(f)+\supp(g)$ is finite. Since $\theta$ is the greatest element of $\supp(g)$, we have
$$\max(\supp(f)+\supp(g))=\max(\supp(f))+\{\theta\}.$$
From (1), we deduce that $\max(\supp(f)+\supp(g))\subseteq\supp(f\cdot g)$, which further establishes the desired result.

{\bf{(4)}} and {\bf{(5)}}\,\,Note that (4) follows from (2) and some straightforward verification, and so we only prove (5). From (3), we deduce that
\begin{eqnarray*}
\begin{split}
A+\supp(h\cdot g)&\subseteq A+\supp(h)+\supp(g)\subseteq \Delta(\supp(f))+\supp(g)\\
&\subseteq\Delta(\supp(f))+\Delta(\supp(g))=\Delta(\supp(f\cdot g)),
\end{split}
\end{eqnarray*}
as desired.

\section{Proof of Lemma 2.4}
\setlength{\parindent}{2em}
The following notation will be used in this section and Appendix D.

\begin{notation}
For any $\alpha,\beta\in\mathbb{N}^{n}$, let $\alpha\wedge\beta\in\mathbb{N}^{n}$ be defined as $(\alpha\wedge\beta)_i=\min\{\alpha_i,\beta_i\}$ for all $i\in[1,n]$.
\end{notation}

\setlength{\parindent}{0em}
\begin{lemma}
Let $g$ be a monic polynomial, and let $\theta$ be the greatest element of $\supp(g)$. Also let $\rho\in\Omega$ such that $\gamma\wedge\theta=\mathbf{0}$ for all $\gamma\in\supp(\rho)$. Let $\psi\in\Omega$ such that $\rho\mid\psi$, $g\mid\psi$. Then, we have $(\rho\cdot g)\mid\psi$.
\end{lemma}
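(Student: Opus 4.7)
The plan is to reduce modulo $\rho$ and exploit the monic structure of $g$. I would begin by observing that the support hypotheses force $g$ and $\rho$ to involve disjoint sets of variables. Let $I=\{i\in[1,n]:\theta_i>0\}$. Every $\alpha\in\supp(g)$ satisfies $\alpha\leqslant\theta$, forcing $\alpha_i=0$ for $i\notin I$, so $g\in R[x_i:i\in I]$. Symmetrically, for $\gamma\in\supp(\rho)$, the condition $\gamma\wedge\theta=\mathbf{0}$ together with $\theta_i>0$ for $i\in I$ forces $\gamma_i=0$ for every $i\in I$, so $\rho\in A$, where $A\triangleq R[x_i:i\notin I]$.

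With this setup, $\Omega=A[x_i:i\in I]$, and $g$ remains monic when viewed inside $A[x_i:i\in I]$: its leading monomial is $\prod_{i\in I}x_i^{\theta_i}$ and its leading coefficient is $1_R\in R\subseteq A$. From $g\mid\psi$ and $\rho\mid\psi$, I pick $p,q\in\Omega$ with $\psi=pg=q\rho$; the problem then reduces to showing $\rho\mid p$ in $\Omega$, for writing $p=\rho p'$ will give $\psi=p'(\rho g)$ and hence $(\rho\cdot g)\mid\psi$.

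To produce this divisibility I would pass to the quotient $\Omega/\rho\Omega\cong(A/\rho A)[x_i:i\in I]$, which makes sense because $\rho\in A$. The relation $pg=q\rho$ descends to $\bar p\,\bar g=0$ in $(A/\rho A)[x_i:i\in I]$. If $A/\rho A$ is the zero ring, i.e., $\rho$ is a unit in $A$, the conclusion $\rho\mid p$ is immediate since $\rho^{-1}\in A\subseteq\Omega$. Otherwise $1_{A/\rho A}\neq 0$, so $\bar g$ is monic in the sense of Definition~2.1 with greatest support element $(\theta_i)_{i\in I}$ and leading coefficient $1_{A/\rho A}$; applying part (4) of Lemma~2.1 over the base ring $A/\rho A$, $\bar g$ is a non-zero-divisor in $(A/\rho A)[x_i:i\in I]$, which forces $\bar p=0$, i.e., $p\in\rho\Omega$. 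This yields $p=\rho p'$ for some $p'\in\Omega$, and hence $\psi=pg=p'(\rho g)$.

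The main point to be careful about is that the monic character of $g$ must be preserved under the successive change of base ring $R\rightsquigarrow A\rightsquigarrow A/\rho A$ (so that Lemma~2.1(4) can be invoked in the quotient), and that the degenerate case in which $\rho$ happens to be a unit in $A$ is dispatched separately; once these two points are checked, the rest is routine.
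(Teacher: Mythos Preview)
Your proof is correct and takes a genuinely different route from the paper's. The key structural insight you add---that the hypothesis $\gamma\wedge\theta=\mathbf{0}$ for all $\gamma\in\supp(\rho)$ forces $g$ and $\rho$ to live in polynomial rings on disjoint variable sets---is not made explicit in the paper. You then exploit this by changing base ring to $A=R[x_i:i\notin I]$, passing to the quotient $(A/\rho A)[x_i:i\in I]$, and using that a monic polynomial is a non-zero-divisor (Lemma~2.1(4)) to cancel $\bar g$ from $\bar p\,\bar g=0$. The paper instead stays inside $\Omega$: it writes $\psi=f\rho$, performs division with remainder of $f$ by $g$ via Lemma~3.1 to obtain $f=qg+\tau$ with $\theta\nleqslant\alpha$ for every $\alpha\in\supp(\tau)$, observes that the disjoint-support hypothesis on $\rho$ propagates this to $\supp(\rho\tau)$, and then uses $g\mid\rho\tau$ together with Lemma~2.1(2) to force $\rho\tau=0$. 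Your approach is more algebraic and arguably cleaner conceptually, at the cost of tracking $g$'s monic status through two changes of base ring and handling the degenerate case $A/\rho A=0$ separately; the paper's approach is more hands-on with supports but avoids leaving the ambient ring $\Omega$ and uses only tools already developed in the paper.
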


\begin{proof}
Since $\rho\mid\psi$, we can choose $f\in\Omega$ with $\psi=f\cdot \rho$. By Lemma 3.1, we can choose $\tau\in\Omega$ such that $g\mid(f-\tau)$ and $\theta\nleqslant\alpha$ for all $\alpha\in\supp(\tau)$. It then follows that $(\rho\cdot g)\mid(\psi-\rho\cdot\tau)$. We claim that $\theta\nleqslant\beta$ for all $\beta\in\supp(\rho\cdot \tau)$. Indeed, let $\beta\in\supp(\rho\cdot \tau)$. Then, we can choose $\alpha\in\supp(\tau)$, $\gamma\in\supp(\rho)$ with $\beta=\alpha+\gamma$. By $\alpha\in\supp(\tau)$, we have $\theta\nleqslant\alpha$, and by $\gamma\in\supp(\rho)$, we have $\gamma\wedge\theta=\mathbf{0}$. It then follows that $\theta\nleqslant\alpha+\gamma=\beta$, as desired. Now by $g\mid\psi$, $g\mid(\psi-\rho\cdot\tau)$, we have $g\mid(\rho\cdot\tau)$. It then follows from (2) of Lemma 2.1 that $\max(\supp(\rho\cdot\tau))=\emptyset$, and hence $\rho\cdot\tau=0$. Therefore from $(\rho\cdot g)\mid(\psi-\rho\cdot\tau)$, we deduce that $(\rho\cdot g)\mid\psi$, as desired.
\end{proof}

\setlength{\parindent}{2em}
We are now ready to prove Lemma 2.4.
\begin{proof}[Proof of Lemma 2.4]
Immediately follows from Lemma B.1 and an induction argument.
\end{proof}

\section{Proofs of Lemmas 2.5 and 2.6}

\setlength{\parindent}{0em}
\begin{proof}[Proof of Lemma 2.5]
If $\alpha\not\in(\mathbb{Z}^{+})^{n}$, then one can check that $\mathbf{0}\in B$, and hence $|\mathbb{N}^{n}-\nabla(B)|=0=\left(\prod_{i=1}^{n}\alpha_i\right)\cdot\binom{n+t-1}{n}$, as desired. Therefore in what follows, we assume that $\alpha\in(\mathbb{Z}^{+})^{n}$. Then, for an arbitrary $\beta\in\mathbb{N}^{n}$, we have
\begin{eqnarray*}
\begin{split}
\beta\in\mathbb{N}^{n}-\nabla(B)&\Longleftrightarrow(\mbox{$\forall~\theta\in\mathbb{N}^{n}~s.t.~\sum_{i=1}^{n}\theta_i=t:(\alpha_1\theta_1,\dots,\alpha_n\theta_n)\nleqslant\beta$})\\
&\Longleftrightarrow(\mbox{$\forall~\theta\in\mathbb{N}^{n}~s.t.~\sum_{i=1}^{n}\theta_i=t:\theta\nleqslant\left(\left\lfloor\frac{\beta_1}{\alpha_1}\right\rfloor,\dots,\left\lfloor\frac{\beta_n}{\alpha_n}\right\rfloor\right)$})\\
&\Longleftrightarrow\mbox{$\sum_{i=1}^{n}\left\lfloor\frac{\beta_i}{\alpha_i}\right\rfloor\leqslant t-1$},
\end{split}
\end{eqnarray*}
which establishes (2.13). Now, with the following two equations:
$$\mbox{$|\{\gamma\in\mathbb{N}^{n}\mid\sum_{i=1}^{n}\gamma_i\leqslant t-1\}|=\binom{n+t-1}{n}$},$$
$$\mbox{$\forall~\gamma\in\mathbb{N}^{n}:|\{\beta\in\mathbb{N}^{n}\mid\left(\left\lfloor\frac{\beta_1}{\alpha_1}\right\rfloor,\dots,\left\lfloor\frac{\beta_n}{\alpha_n}\right\rfloor\right)=\gamma\}|=\prod_{i=1}^{n}\alpha_i$},$$
(2.12) immediately follows from (2.13), as desired.
\end{proof}

\setlength{\parindent}{0em}
\begin{proof}[Proof of Lemma 2.6]
Let $\mathbf{1}\in\mathbb{N}^{n}$ denote the all $1$ vector. Then, via some straightforward verification, we have
$$\hspace*{-12mm}\mbox{$\nabla(C)-\nabla(B)=\{(\alpha_1\theta_1,\dots,\alpha_n\theta_n)+\tau\mid\theta\in\mathbb{N}^{n},\sum_{i=1}^{n}\theta_i=t-1,\tau\in\mathbb{N}^{n},\alpha-\gamma\leqslant\tau\leqslant\alpha-\mathbf{1}\}$}.$$
Moreover, for any $\theta,\tau,\beta,\eta\in\mathbb{N}^{n}$ with $\tau\leqslant\alpha-\mathbf{1}$, $\eta\leqslant\alpha-\mathbf{1}$, it holds that
$$(\alpha_1\theta_1,\dots,\alpha_n\theta_n)+\tau=(\alpha_1\beta_1,\dots,\alpha_n\beta_n)+\eta\Longrightarrow\theta=\beta,\tau=\eta.$$
By the above discussion, we have
\begin{eqnarray*}
\begin{split}
|\nabla(C)-\nabla(B)|&=\mbox{$|\{\tau\in\mathbb{N}^{n}\mid\alpha-\gamma\leqslant\tau\leqslant\alpha-\mathbf{1}\}|\cdot|\{\theta\in\mathbb{N}^{n}\mid\sum_{i=1}^{n}\theta_i=t-1\}|$}\\
&=\mbox{$(\prod_{i=1}^{n}\gamma_i)\cdot\binom{n+t-2}{n-1}$}.
\end{split}
\end{eqnarray*}
It then follows from Lemma 2.5 that
\begin{eqnarray*}
\begin{split}
|\mathbb{N}^{n}-\nabla(B\cup C)|&=|\mathbb{N}^{n}-\nabla(B)|-|\nabla(C)-\nabla(B)|\\
&=\mbox{$(\prod_{i=1}^{n}\alpha_i)\binom{n+t-1}{n}-(\prod_{i=1}^{n}\gamma_i)\binom{n+t-2}{n-1}$},
\end{split}
\end{eqnarray*}
as desired.
\end{proof}

\section{Establishing (2) of Lemma 2.7}
\setlength{\parindent}{2em}
We begin with some remarks on the notion of $\mathbf{S}$-polynomial (see [1, Definition 1.7.1], [16, Section 2.9], [21, Definition 21.29]). Suppose that $f,g$ are monic polynomials, and $\alpha,\beta$ are the greatest elements of $\supp(f)$ and $\supp(g)$, respectively. Then, the $\mathbf{S}$-polynomial of $(f,g)$, denoted by $\mathbf{S}(f,g)$, is defined as
\begin{equation}\mbox{$\mathbf{S}(f,g)\triangleq\left(\prod_{i=1}^{n}{x_i}^{\beta_i-(\alpha\wedge\beta)_i}\right)\cdot f-\left(\prod_{i=1}^{n}{x_i}^{\alpha_i-(\alpha\wedge\beta)_i}\right)\cdot g$}.\end{equation}
In general, $\mathbf{S}$-polynomials are defined with respect to a fixed monomial order, and for arbitrary polynomials. For monic polynomials, (D.1) is indeed a special case of the general definition of $\mathbf{S}$-polynomial.

The celebrated Buchberger's Theorem characterizes Gr\"{o}bner bases in terms of $\mathbf{S}$-polynomials (see [1, Theorem 1.7.4], [16, Section 2.10], [21, Theorem 21.31]). Buchberger's Theorem is established for polynomials over a field. Due to the fact that any monic polynomial has leading coefficient $1_R$ (see Remark 2.1), one can check that the proof of Buchberger's Theorem remains valid in our setting, which further leads to the following lemma.

\setlength{\parindent}{0em}
\begin{lemma}
Let $\Lambda$ be a finite set, and let $(g(\lambda)\mid\lambda\in\Lambda)\in\Omega^{\Lambda}$ be a family of monic polynomials. Assume that for any $(\xi,\mu)\in\Lambda\times\Lambda$, there exists $(p(\lambda)\mid\lambda\in\Lambda)\in\Omega^{\Lambda}$ such that $\mathbf{S}(g(\xi),g(\mu))=\sum_{\lambda\in\Lambda}p(\lambda)\cdot g(\lambda)$ and
$$\forall~\lambda\in\Lambda:\supp(p(\lambda))+\supp(g(\lambda))\subseteq\Delta(\supp(\mathbf{S}(g(\xi),g(\mu)))).$$
Then, $(g(\lambda)\mid\lambda\in\Lambda)$ is a Gr\"{o}bner basis of $\langle\{g(\lambda)\mid\lambda\in\Lambda\}\rangle$.
\end{lemma}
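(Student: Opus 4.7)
The plan is to fix an auxiliary monomial order $\preccurlyeq$ on $\mathbb{N}^n$ (as discussed in Remark 2.1) under which each monic $g(\lambda)$ has leading exponent $\theta(\lambda)$ with coefficient $1_R$, and then run the classical Buchberger rewriting argument, which proceeds verbatim for monic generators regardless of whether $R$ is a field. Once this establishes condition (3) of Theorem 3.1 for every $f \in Q := \langle\{g(\lambda) : \lambda \in \Lambda\}\rangle$, the Gr\"{o}bner basis conclusion follows from the equivalence $(3) \Rightarrow (1)$ in Theorem 3.1.

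Concretely, I would take an arbitrary nonzero $f \in Q$, consider all representations $f = \sum_{\lambda \in \Lambda} p(\lambda) g(\lambda)$ with $p(\lambda) \in \Omega$, and choose one minimizing $\tilde{\nu} := \max_\preccurlyeq \bigcup_{\lambda \in \Lambda} \bigl(\supp(p(\lambda)) + \supp(g(\lambda))\bigr)$. If such a minimizer satisfies $\tilde{\nu} \in \supp(f)$, one immediately gets $\supp(p(\lambda)) + \supp(g(\lambda)) \subseteq \{\nu \in \mathbb{N}^n : \nu \preccurlyeq \tilde{\nu}\} \subseteq \Delta(\supp(f))$ for every $\lambda$, which is the support condition required by condition (3) of Theorem 3.1.

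The main obstacle is the Buchberger cancellation-and-rewrite step establishing $\tilde{\nu} \in \supp(f)$. Suppose to the contrary that $\tilde{\nu} \notin \supp(f)$; then the contributions to the $x^{\tilde{\nu}}$-coefficient of $\sum_\lambda p(\lambda) g(\lambda)$ from the pairs $(\lambda, \nu)$ with $\nu \in \supp(p(\lambda))$ and $\nu + \theta(\lambda) = \tilde{\nu}$ must sum to zero in $R$. I would then iteratively pair up these ``top'' contributions and rewrite each pair using the hypothesized decomposition $\mathbf{S}(g(\lambda_1), g(\lambda_2)) = \sum_\lambda q(\lambda) g(\lambda)$, multiplied by the appropriate monomial to match the $x^{\tilde{\nu}}$-shift. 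The key observation is that $\mathbf{S}(g(\lambda_1), g(\lambda_2))$ has support strictly $\preccurlyeq$-below $\theta(\lambda_1) \vee \theta(\lambda_2)$ (because the two monic leading $1_R$ terms cancel), so the hypothesis $\supp(q(\lambda)) + \supp(g(\lambda)) \subseteq \Delta(\supp(\mathbf{S}(g(\lambda_1), g(\lambda_2))))$ forces each newly introduced $q(\lambda) g(\lambda)$ term to have support strictly $\preccurlyeq$-below $\tilde{\nu}$ after the monomial shift. The rewritten representation then has strictly smaller $\tilde{\nu}$, contradicting minimality; so $\tilde{\nu} \in \supp(f)$, completing the argument.
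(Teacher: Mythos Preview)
Your overall strategy---fix a monomial order, run the classical Buchberger cancellation argument using monicity in place of field-invertibility---is exactly what the paper intends (the paper simply asserts that the standard proof goes through verbatim for monic generators). However, your implementation contains a genuine gap.

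The faulty step is the chain of inclusions
\[
\supp(p(\lambda))+\supp(g(\lambda))\ \subseteq\ \{\nu\in\mathbb{N}^{n}:\nu\preccurlyeq\tilde{\nu}\}\ \subseteq\ \Delta(\supp(f)).
\]
The first inclusion is fine, but the second is false in general: the total monomial order $\preccurlyeq$ and the componentwise partial order $\leqslant$ used to define $\Delta$ in (2.4) are not comparable in that direction. For a concrete counterexample take $n=2$, lex order with $x_1\succ x_2$, and $f=x_1^{2}$, so $\tilde{\nu}=(2,0)$ and $\Delta(\supp(f))=\{(0,0),(1,0),(2,0)\}$; then $(0,100)\preccurlyeq(2,0)$ but $(0,100)\notin\Delta(\supp(f))$. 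So the Buchberger argument, even when it succeeds, does not hand you condition (3) of Theorem 3.1 directly.

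The easy repair is to target Definition 2.3 (equivalently condition (1) of Theorem 3.1) rather than condition (3). Your rewriting argument already yields that the minimal $\tilde{\nu}$ lies in $\supp(f)$. Moreover, for any $\lambda_0$ attaining the maximum, monicity of $g(\lambda_0)$ forces the $\preccurlyeq$-top element of $\supp(p(\lambda_0))+\supp(g(\lambda_0))$ to be $\nu_0+\theta(\lambda_0)$ with $\nu_0\in\supp(p(\lambda_0))$, hence $\tilde{\nu}=\nu_0+\theta(\lambda_0)$ and in particular $\theta(\lambda_0)\leqslant\tilde{\nu}$ componentwise. That is precisely the conclusion Definition 2.3 demands for the element $\tilde{\nu}\in\supp(f)$, so the Gr\"{o}bner basis property follows without ever needing the false inclusion.
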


\setlength{\parindent}{0em}
\begin{lemma}
Let $(g_1,\dots,g_n)\in\Omega^{n}$ be a family of monic polynomials such that $g_k\in R[x_{k}]$. Fix $\alpha\in\mathbb{N}^{n}$, $\beta\in\mathbb{N}^{n}$. Let $\varphi=\mathbf{S}(\prod_{k=1}^{n}{g_{k}}^{\alpha_{k}},\prod_{k=1}^{n}{g_{k}}^{\beta_{k}})$, and let
\begin{equation}\mbox{$h_1=(\prod_{k=1}^{n}{x_{k}}^{c_{k}(\beta_{k}-(\alpha\wedge\beta)_k)})-(\prod_{k=1}^{n}{g_{k}}^{\beta_{k}-(\alpha\wedge\beta)_k})$},\end{equation}
\begin{equation}\mbox{$h_2=(\prod_{k=1}^{n}{g_{k}}^{\alpha_{k}-(\alpha\wedge\beta)_k})-(\prod_{k=1}^{n}{x_{k}}^{c_{k}(\alpha_{k}-(\alpha\wedge\beta)_k)})$},\end{equation}
where $c_k\triangleq\deg(g_k)$ for all $k\in[1,n]$. Then, the following three equations hold:
\begin{equation}\mbox{$\varphi=h_1\cdot(\prod_{k=1}^{n}{g_{k}}^{\alpha_{k}})+h_2\cdot(\prod_{k=1}^{n}{g_{k}}^{\beta_{k}})$},\end{equation}
\begin{equation}\mbox{$\supp(h_1)+\supp(\prod_{k=1}^{n}{g_{k}}^{\alpha_{k}})\subseteq\Delta(\supp(\varphi))$},\end{equation}
\begin{equation}\mbox{$\supp(h_2)+\supp(\prod_{k=1}^{n}{g_{k}}^{\beta_{k}})\subseteq\Delta(\supp(\varphi))$}.\end{equation}
\end{lemma}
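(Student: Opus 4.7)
The plan is to verify (D.4) by a direct algebraic expansion, and then to prove (D.5) (with (D.6) following by a symmetric argument) by first reducing to the case $\alpha\wedge\beta=\mathbf{0}$ using part (5) of Lemma 2.1, where the structure of $\supp(\varphi)$ becomes explicit enough for a short case analysis.

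For (D.4), I would begin by noting that part (1) of Lemma 2.7 tells us $\prod_{k=1}^{n}g_k^{\alpha_k}$ is monic with greatest element $(c_1\alpha_1,\dots,c_n\alpha_n)$, and similarly for $\beta$; since $(c_k\alpha_k)\wedge(c_k\beta_k)=c_k(\alpha\wedge\beta)_k$, the definition (D.1) of the $\mathbf{S}$-polynomial gives $\varphi=Q\cdot\prod_k g_k^{\alpha_k}-Q'\cdot\prod_k g_k^{\beta_k}$, where $Q,Q'$ denote the monomial prefactors appearing in (D.2), (D.3) and $P,P'$ denote the polynomial subtrahends. Expanding $h_1\cdot\prod_k g_k^{\alpha_k}+h_2\cdot\prod_k g_k^{\beta_k}=(Q-P)\prod_k g_k^{\alpha_k}+(P'-Q')\prod_k g_k^{\beta_k}$, the two cross terms $P\cdot\prod_k g_k^{\alpha_k}$ and $P'\cdot\prod_k g_k^{\beta_k}$ both equal $\prod_k g_k^{\alpha_k+\beta_k-(\alpha\wedge\beta)_k}$ and hence cancel, leaving precisely $\varphi$.

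For (D.5), I would next carry out a reduction. Set $M=\prod_k g_k^{(\alpha\wedge\beta)_k}$, $\alpha'=\alpha-(\alpha\wedge\beta)$, $\beta'=\beta-(\alpha\wedge\beta)$, and let $\varphi_0$ be the $\mathbf{S}$-polynomial of $\prod_k g_k^{\alpha'_k}$ and $\prod_k g_k^{\beta'_k}$. A direct check shows $\varphi=\varphi_0\cdot M$ and $\prod_k g_k^{\alpha_k}=\prod_k g_k^{\alpha'_k}\cdot M$, while the $h_1$ built from $(\alpha',\beta')$ coincides with that built from $(\alpha,\beta)$ because both depend only on $\beta-(\alpha\wedge\beta)=\beta'$. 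Applying part (5) of Lemma 2.1 with $f=\varphi_0$, $g=M$, $A=\supp(h_1)$, $h=\prod_k g_k^{\alpha'_k}$ then reduces (D.5) to the analogous inclusion for $(\alpha',\beta')$. Thus we may assume $\alpha\wedge\beta=\mathbf{0}$. Under this assumption, the index sets $I=\{k:\alpha_k>0\}$ and $J=\{k:\beta_k>0\}$ are disjoint, so $\prod_k g_k^{\alpha_k}$ lives in the variables indexed by $I$ and $\prod_k g_k^{\beta_k}$ in those indexed by $J$. A short verification shows that $\supp(Q\cdot\prod_k g_k^{\alpha_k})=\{q\}+\supp(\prod_k g_k^{\alpha_k})$ and $\supp(Q'\cdot\prod_k g_k^{\beta_k})=\{q'\}+\supp(\prod_k g_k^{\beta_k})$ intersect exactly in the singleton $\{L\}$, where $L_k=c_k\max(\alpha_k,\beta_k)$, and both summands carry coefficient $1_R$ at $L$; hence $\supp(\varphi)=(\supp(Q\cdot\prod_k g_k^{\alpha_k})\cup\supp(Q'\cdot\prod_k g_k^{\beta_k}))\setminus\{L\}$.

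Finally, fix $\gamma\in\supp(h_1)$ and $\delta\in\supp(\prod_k g_k^{\alpha_k})$. Since $h_1=Q-P$ and $P$ is monic with greatest element $q$ (the exponent of $Q$) and coefficient $1_R$ there, we have $\gamma\leqslant q$ and $\gamma\neq q$. If $\delta$ is not the greatest element of $\supp(\prod_k g_k^{\alpha_k})$, set $\mu=q+\delta$: then $\mu\in\supp(Q\cdot\prod_k g_k^{\alpha_k})\setminus\{L\}\subseteq\supp(\varphi)$ and $\gamma+\delta\leqslant\mu$. Otherwise, set $\mu'=q'+\gamma$: since $\gamma\neq q$, we have $\mu'\in\supp(Q'\cdot\prod_k g_k^{\beta_k})\setminus\{L\}\subseteq\supp(\varphi)$, and $\gamma+\delta\leqslant\mu'$ holds coordinatewise using $I\cap J=\emptyset$. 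This proves (D.5), and (D.6) follows by the symmetric argument interchanging $\alpha\leftrightarrow\beta$ and $h_1\leftrightarrow h_2$. The main obstacle is precisely this second case: when $\delta$ attains the leading exponent of $\prod_k g_k^{\alpha_k}$, the natural candidate $q+\delta=L$ is exactly the element annihilated by the $\mathbf{S}$-polynomial cancellation, so one must instead manufacture a dominator from the other half $\supp(Q'\cdot\prod_k g_k^{\beta_k})$, and this is where the monicity of $P$ (forcing $\gamma\neq q$) is essential.
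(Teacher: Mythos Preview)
Your proposal is correct and follows essentially the same route as the paper: both verify (D.4) by direct expansion, reduce (D.5)--(D.6) to the case $\alpha\wedge\beta=\mathbf{0}$ via (5) of Lemma~2.1, and then exploit the key observation that any $\gamma\in\supp(h_1)$ satisfies $\gamma\leqslant q$ with $\gamma\neq q$. The only organizational difference is that the paper bypasses your case split on $\delta$: it shows directly that $\supp(h_1)+\{\rho\}\subseteq\supp(f)$ (your Case~2 argument, applied to every $\gamma$), and since $\rho$ is the greatest element of $\supp(\prod_k g_k^{\mu_k})$ this immediately yields $\supp(h_1)+\supp(\prod_k g_k^{\mu_k})\subseteq\Delta(\supp(f))$, making your Case~1 redundant.
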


\begin{proof}
Let $\lambda=\beta-(\alpha\wedge\beta)$, $\mu=\alpha-(\alpha\wedge\beta)$, $\theta=(c_{1}\lambda_{1},\dots,c_{n}\lambda_{n})$, $\rho=(c_{1}\mu_{1},\dots,c_{n}\mu_{n})$, $f=\mathbf{S}(\prod_{k=1}^{n}{g_{k}}^{\mu_{k}},\prod_{k=1}^{n}{g_{k}}^{\lambda_{k}})$. Since $\lambda\wedge\mu=\mathbf{0}$, we have $\theta\wedge\rho=\mathbf{0}$. Therefore by (D.1), we have
\begin{equation}\mbox{$f=(\prod_{k=1}^{n}{x_{k}}^{\theta_{k}})(\prod_{k=1}^{n}{g_{k}}^{\mu_{k}})-(\prod_{k=1}^{n}{x_{k}}^{\rho_{k}})(\prod_{k=1}^{n}{g_{k}}^{\lambda_{k}})$}.\end{equation}
Now we show that the following three equations hold:
\begin{equation}\mbox{$\varphi=f\cdot(\prod_{k=1}^{n}{g_{k}}^{(\alpha\wedge\beta)_k})=h_1\cdot(\prod_{k=1}^{n}{g_{k}}^{\alpha_{k}})+h_2\cdot(\prod_{k=1}^{n}{g_{k}}^{\beta_{k}})$},\end{equation}
\begin{equation}\mbox{$\supp(h_1)+\supp(\prod_{k=1}^{n}{g_{k}}^{\mu_k})\subseteq\Delta(\supp(f))$},\end{equation}
\begin{equation}\mbox{$\supp(h_2)+\supp(\prod_{k=1}^{n}{g_{k}}^{\lambda_k})\subseteq\Delta(\supp(f))$}.\end{equation}
First, (D.8) follows from some straightforward computation. Second, we prove (D.9). Let $\varepsilon\in\supp(h_1)$. From (D.2), we deduce that $\varepsilon\leqslant\theta$, $\varepsilon\neq\theta$, which, together with $\theta\wedge\rho=\mathbf{0}$, implies that $\theta\nleqslant\varepsilon+\rho$, and hence $\varepsilon+\rho\not\in\supp((\prod_{k=1}^{n}{x_{k}}^{\theta_{k}})(\prod_{k=1}^{n}{g_{k}}^{\mu_{k}}))$. Also noticing that $\varepsilon\in\supp(\prod_{k=1}^{n}{g_{k}}^{\lambda_{k}})$, we have $\varepsilon+\rho\in\supp((\prod_{k=1}^{n}{x_{k}}^{\rho_{k}})(\prod_{k=1}^{n}{g_{k}}^{\lambda_{k}}))$. It then follows from (D.7) that $\varepsilon+\rho\in\supp(f)$. By the arbitrariness of $\varepsilon$, we have $\supp(h_1)+\{\rho\}\subseteq\supp(f)$, which, together with the fact that $\rho$ is the greatest element of $\supp(\prod_{k=1}^{n}{g_{k}}^{\mu_{k}})$, further establishes (D.9). Third, with the help of (D.3) and (D.7), (D.10) can be established in a parallel fashion. Now (D.4) follows from (D.8). Moreover, with (D.8) and (5) of Lemma 2.1, (D.5) and (D.6) follow from (D.9) and (D.10), respectively.
\end{proof}

\setlength{\parindent}{2em}
Finally, we conclude that (2) of Lemma 2.7 immediately follows from Lemmas D.1 and D.2, as desired.

\end{document}